\documentclass[11pt,reqno]{amsart}
\usepackage[utf8]{inputenc}
\usepackage[T1]{fontenc}
\usepackage[a4paper, total={160mm,250mm}]{geometry}

\usepackage{amsmath}
\usepackage{amssymb}
\usepackage{amsthm}

\usepackage{mathtools}
\usepackage{aliascnt}

\usepackage{xcolor}
\usepackage{graphicx}
\usepackage{tikz}
\usepackage[normalem]{ulem}
\usepackage{dutchcal}
\usepackage{esint} 
\usepackage{enumitem}

\usepackage[hypertexnames=false]{hyperref}
\usepackage{cleveref}

\usepackage{autonum}
\makeatletter
\autonum@generatePatchedReference{eqref}
\makeatother

\numberwithin{equation}{section}

\colorlet{darkblue}{blue!90!black}
\colorlet{darkgreen}{green!50!black}

\newcommand{\eqdef}{\stackrel{\mathclap{\mbox{\tiny def}}}{=}}
\newcommand{\eqd}{\stackrel{\mathclap{\mbox{\tiny d}}}{=}}

\newcommand{\ic}{\mathbf{1}}
\newcommand{\dd}{\mathrm{d}}

\newcommand{\thalf}{\tfrac{1}{2}}

\usepackage{framed}

\def\one{\mathrm{I}}
\def\two{\mathrm{II}}

\newcommand{\law}{\mathrm{law}}

\newcommand{\supp}{\mathrm{supp}}

\newcommand{\vol}{\mathrm{vol}}

\DeclareMathOperator*{\Leb}{Leb}

\theoremstyle{plain}

\newtheorem{theorem}{Theorem}[section]

\newaliascnt{lemma}{theorem}
\newtheorem{lemma}[lemma]{Lemma}
\aliascntresetthe{lemma}

\newaliascnt{proposition}{theorem}
\newtheorem{proposition}[proposition]{Proposition}
\aliascntresetthe{proposition}

\newaliascnt{corollary}{theorem}
\newtheorem{corollary}[corollary]{Corollary}
\aliascntresetthe{corollary}

\theoremstyle{definition}

\newaliascnt{definition}{theorem}
\newtheorem{definition}[definition]{Definition}
\aliascntresetthe{definition}

\newaliascnt{assumption}{theorem}

\aliascntresetthe{assumption}

\theoremstyle{remark}

\newaliascnt{remark}{theorem}
\newtheorem{remark}[remark]{Remark}
\aliascntresetthe{remark}

\crefname{theorem}{Theorem}{Theorems}
\Crefname{theorem}{Theorem}{Theorems}

\crefname{lemma}{Lemma}{Lemmas}
\Crefname{lemma}{Lemma}{Lemmas}

\crefname{proposition}{Proposition}{Propositions}
\Crefname{proposition}{Proposition}{Propositions}

\crefname{corollary}{Corollary}{Corollaries}
\Crefname{corollary}{Corollary}{Corollaries}

\crefname{definition}{Definition}{Definitions}
\Crefname{definition}{Definition}{Definitions}

\crefname{assumption}{Assumption}{Assumptions}
\Crefname{assumption}{Assumption}{Assumptions}

\crefname{remark}{Remark}{Remarks}
\Crefname{remark}{Remark}{Remarks}

\crefname{section}{Section}{Sections}
\Crefname{section}{Section}{Sections}

\crefname{subsection}{Section}{Sections}
\Crefname{subsection}{Section}{Sections}

\crefname{figure}{Figure}{Figures}
\Crefname{figure}{Figure}{Figures}

\newcommand{\fraki}{\mathfrak{i}}
\newcommand{\frakj}{\mathfrak{j}}
\newcommand{\frakk}{\mathfrak{k}}
\newcommand{\frakl}{\mathfrak{l}}

\newcommand{\frakp}{\mathfrak{p}}

\newcommand{\bbC}{\mathbb{C}}

\newcommand{\bbE}{\mathbb{E}}

\newcommand{\bbN}{\mathbb{N}}

\newcommand{\bbP}{\mathbb{P}}

\newcommand{\bbR}{\mathbb{R}}

\newcommand{\bbZ}{\mathbb{Z}}

\newcommand{\bfe}{\mathbf{e}}

\newcommand{\bfn}{\mathbf{n}}

\newcommand{\bfE}{\mathbf{E}}

\newcommand{\bfN}{\mathbf{N}}

\newcommand{\bfP}{\mathbf{P}}

\newcommand{\calB}{\mathcal{B}}

\newcommand{\calF}{\mathcal{F}}

\newcommand{\calH}{\mathcal{H}}

\newcommand{\calJ}{\mathcal{J}}

\newcommand{\calL}{\mathcal{L}}

\newcommand{\calR}{\mathcal{R}}

\newcommand{\calT}{\mathcal{T}}

\newcommand{\VHat}{\hat{V}}

\newcommand{\YHat}{\hat{Y}}

\newcommand{\tauHat}{\hat{\tau}}

\newcommand{\DBar}{\overline{D}}

\newcommand{\WBar}{\overline{W}}

\newcommand{\OmegaBar}{\overline{\Omega}}

\newcommand{\yTil}{\tilde{y}}

\newcommand{\ATil}{\tilde{A}}

\newcommand{\FTil}{\tilde{F}}

\newcommand{\JTil}{\tilde{J}}
\newcommand{\KTil}{\tilde{K}}

\newcommand{\QTil}{\tilde{Q}}

\newcommand{\WTil}{\tilde{W}}

\newcommand{\YTil}{\tilde{Y}}

\newcommand{\etaTil}{\tilde{\eta}}

\def\restr{\mathord{\upharpoonright}}

\newcommand{\ch}{\mathcal h}
\newcommand{\s}{\text{s}}

\date{\today}
\title{The dynamical bead model}

\begin{document}
\author{Giuseppe Cannizzaro}
\address{University of Warwick, UK}
\email{giuseppe.cannizzaro@warwick.ac.uk}

\author{Harry Giles}
\address{TU Wien, Austria}
\email{harry.giles@tuwien.ac.at}

\author{Fabio Toninelli}
\address{TU Wien, Austria}
\email{fabio.toninelli@tuwien.ac.at}

\begin{abstract}
We construct an infinite-dimensional diffusion on the space of interlaced particle configurations on $\bbZ \times \bbR$,
a process we call the \textit{dynamical bead model}. The process (self-)interacts through instantaneous reflection 
and preserves the bead measures of [C. Boutillier, Ann. Probab., 2009]. 
Through the height function correspondence, our process is equivalent to a reversible interface in $(2+1)$-dimensions. 
We provide compelling evidence that at large space-time scales the interface exhibits diffusive behaviour, 
as we compute exactly its diffusivity matrix 
and show it is non-degenerate and independent of time. 
\end{abstract}

\maketitle
\bigskip\noindent
{\it Key words and phrases.}
Interacting particle systems, interface dynamics, diffusivity, bead model, reflecting diffusions\bigskip

\section{Introduction}\label{sec:lozenge-tilings-diff}
In the present paper, we introduce a novel $(2+1)$-dimensional dynamical interface model, that we call the 
\textit{dynamical bead model}. The name is due to the fact that the spatial gradient of the process is reversible 
with respect to a family of infinite-volume Gibbs measures $\bf P_\gamma$  
labelled by a slope parameter $\gamma\in(-1,1)$, which are 
the Gibbs measures of the so-called (static) bead model. The static bead model is
a determinantal point process on $\mathbb Z\times \mathbb R$
constructed in \cite{BoutillierBeads} 
as a universal scaling limit of dimer models on periodic bipartite graphs. 
On the other hand, its dynamical counterpart discussed herein can be viewed as a special case of the dynamical 
Ginzburg-Landau $\nabla\phi$ interface model studied in  \cite{FunakiSpohn,Giacomin2001Equilibrium,Armstrong2022C2}, 
but in which the interaction potential is neither smooth nor strictly convex. 

As in this latter case, we are interested in the large-scale space-time properties of the dynamical bead model. In this perspective, 
our work is a step in the long-term program, set out in \cite{Spohn1993Interface},
of obtaining scaling limits of stochastic reversible interface motion. Before presenting our results, 
let us (informally) introduce the model and its relation to the literature (see \cref{sec:model} for the formal statements).  

\subsubsection*{The dynamical bead model.} The dynamical bead model is  
an infinite, two-dimensional system
of interacting and interlaced one-dimensional Brownian particles. 
Particles live on an 
infinite collection of lines (copies of $\mathbb{R}$), that we label
with an index $\frakl\in \mathbb Z$, and carry a double
discrete label $(\frakl,\frakk)$ that specifies both which line $\frakl$ they
live on (particles cannot jump to other lines) and their ordering $\frakk$
along that line. On each line, there are infinitely many
particles. For convenience, the order index $\frakk$ is chosen to take
values in $\mathbb Z$ for $\frakl$ even and in $\mathbb Z+1/2$ for
$\frakl$ odd.  The interlacement condition imposes that the coordinate
(call it $y(\frakl,\frakk)$) of the particle labelled
$(\frakl,\frakk)$ (that is, the $\frakk$-th particle in line
$\frakl$)
satisfies
\begin{equation}\label{eq:35}
y(\frakl\pm1,\frakk-1/2)<y(\frakl,\frakk)<y(\frakl\pm1,\frakk+1/2).
\end{equation}
See Figure \ref{fig:bead_config_example}. The time-evolution of the particles can then be described as follows: 
each particle is subject to 
an i.i.d.\ one-dimensional Brownian noise and it is reflected off
particles in neighbouring lines, in such a way that the interlacement
is satisfied at all times. In particular, in each line the ordering of
particles is preserved. In formulas, this gives rise to the infinite
system
\begin{equation}\label{eq:65}
\begin{split}
\dd Y_t(\frakl, \frakk) = \dd W_t(\frakl, \frakk) &+ \thalf \dd L_t((\frakl-1, \frakk-\thalf),(\frakl, \frakk)) - \thalf \dd L_t((\frakl, \frakk), (\frakl-1, \frakk+\thalf)) \\
&+ \thalf \dd L_t((\frakl+1, \frakk-\thalf),(\frakl, \frakk)) - \thalf \dd L_t((\frakl, \frakk), (\frakl+1, \frakk+\thalf))
\end{split}
\end{equation}
of coupled Stochastic Differential Equations (SDEs) with normal
reflections.
Here, $(W(\frakl,\frakk))_{\frakl,\frakk}$ is an infinite family of independent standard Brownian motions, 
and $L_t((\frakl\pm1,\frakk- 1/2),(\frakl, \frakk))$ is the local time at zero of  $Y_t(\frakl,\frakk)-Y_t(\frakl\pm 1,\frakk-1/2)$, 
and similarly for $L_t((\frakl, \frakk), (\frakl\pm1, \frakk+1/2))$.
As explained in Definition \ref{def:Height} below, to each configuration of interlaced particles one can associate 
(bijectively, up to a suitable height offset) a height function defined on $\mathbb Z\times \mathbb R$. 
Therefore, one can equivalently view the process as a dynamically evolving $(2+1)$-dimensional interface. 

The system has two microscopic
locally conserved quantities,  corresponding to
the height slopes in the two spatial directions. In terms of particles, these conserved quantities are the particle number and the function $$(\frakl,\frakk)\mapsto [y(\frakl+1,\frakk+1/2)-y(\frakl,\frakk)]-[y(\frakl,\frakk)-y(\frakl+1,\frakk-1/2)].$$
The stationary measures are labelled by the average
 interface gradient. Since by scaling we can fix the average particle density (and choose it to be $1/\pi$), 
these will be parametrised by a single slope parameter
$\gamma\in(-1,1)$ and are given by the 
$\bf P_\gamma$'s alluded to earlier (see Section \ref{sec:thebeadmodel} for their definition).

\subsubsection*{Reflecting diffusions, spin dynamics and lozenge tilings.}  
Finite or infinite systems of reflecting diffusions have a long tradition in probability and mathematical physics. 
Started in the works of Skorokhod \cite{Skorokhod61_StochasticEquations} and Tanaka \cite{Tanaka79_StochasticDifferential}, 
their analysis can count by now on a significant body of literature, see e.g. \cite{Pilipenko14_IntroductionStochastic} for 
a review in the case of finitely many particles, and \cite{IchiKaraShko13_StrongSolutions, Sarantsev17_InfiniteSystems} 
or the more recent \cite{banerjee2025strong} (and references therein) for the case of countably many diffusions 
evolving on a one-dimensional space. 
Another line of research, pioneered by \cite{dyson1962brownian} and 
in which the hard-core reflection is replaced by a singular (long-range) potential, 
originated in random matrix theory, in particular in the study of the eigenvalues of random matrix ensembles;  
see also \cite{assiotis2024isde}, that analyses generalisations of Dyson's Brownian motion, 
and constructs the process with infinitely many particles. 
Instead, results concerning interlaced particles in two dimensions are much scarcer. A notable example
is the system introduced in \cite{Warren2007Dysons} which shares a similar state space to ours, but 
whose dynamics is very different: in \cite{Warren2007Dysons} particles in line $\frakl$ 
influence those in line $\frakl+1$ but not vice-versa, while the interaction is mutual and symmetric in our case. 

From the statistical physics viewpoint, the process we define displays connections 
with several models of interest. On the one hand, it is a natural two-dimensional extension of the one-dimensional 
model of gases of particles with hard-core interactions studied in \cite{harris1965diffusion}. 
On the other (and arguably, more interestingly), it may be viewed as a limit of the zero-temperature Glauber dynamics of $+ / -$
spin interfaces of the three-dimensional Ising model, when the normal vector to the  $+ / -$ interface 
has one almost vanishing component. Indeed, it is known
\cite{Caputo2011Zero} that the zero-temperature $3 d$ Ising dynamics is equivalent to
a dynamics on lozenge tilings of the plane (see Figure~\ref{fig:lozenge_dynamics}), or 
to a dynamics of the dimer model on the hexagonal lattice.  
Our model is then (formally) the limit where the density of horizontal lozenges 
(that correspond to our particles) is vanishingly small,  
while space and time are suitably rescaled in order to restore a  density and a noise variance of order 1. 
At the level of stationary measures, the correspondence between interlaced  particles and lozenge tilings 
can be made rigorous. In fact, it was shown in
\cite{BoutillierBeads} that the Gibbs measures of lozenge tilings of the plane, under suitable space
rescaling, converge in the low-density limit to the bead model measures $\bf P_\gamma$. 

\begin{figure}[htbp]
\centering
% A patch of a genuine lozenge tiling (projection of a plane partition).
% The horizontal and vertical units give equilateral lattice triangles.
% Standalone TikZ fragment, as in the slide figures; requires only tikz.
\begin{tikzpicture}[x=0.55cm,y=0.635085cm,>=stealth,
  tile/.style={draw=black!60,line width=0.35pt,line join=round},
  white tile/.style={tile,fill=white},
  black tile/.style={tile,fill=black}]
  \begin{scope}
    \clip (-5.5,-1) rectangle (5.5,6);
    % The two non-horizontal orientations.
    \foreach \x/\y in {
    -4/4, -4/5, -5/0.5, -5/1.5, -1/5.5, -2/3, -2/4,
    -2/5, -2/6, -3/-0.5, -3/0.5, -3/1.5, -3/2.5, -4/-1,
    1/4.5, 1/5.5, 0/3, 0/4, -1/1.5, 3/5.5, 2/3,
    1/0.5, 1/1.5, 0/-1, 0/0, 5/5.5, 4/1, 4/2,
    4/3, 4/4, 3/-1.5, 3/-0.5, 3/0.5, 3/1.5, 2/-1,
    6/0, 6/1, 6/2, 6/3, 6/4, 5/-1.5, 5/-0.5
    } {
      \path[white tile] (\x,\y) -- ++(-1,-0.5) -- ++(0,1)
        -- ++(1,0.5) -- cycle;
    }
    \foreach \x/\y in {
    -6/3, -6/4, -6/5, -4/4, -4/5, -4/6, -5/0.5,
    -5/1.5, -5/2.5, -6/-1, -1/5.5, -2/3, -2/4, -3/-0.5,
    -3/0.5, -3/1.5, 1/4.5, 1/5.5, 0/3, -1/1.5, -2/-1,
    -2/0, 3/5.5, 2/3, 2/4, 1/0.5, 1/1.5, 0/-1,
    5/5.5, 4/1, 4/2, 4/3, 4/4, 3/-1.5, 3/-0.5,
    5/-1.5
    } {
      \path[white tile] (\x,\y) -- ++(1,-0.5) -- ++(0,1)
        -- ++(-1,0.5) -- cycle;
    }
    % Horizontal lozenges; their centres interlace on neighbouring columns.
    % The central particle (0,2.5) has visible neighbours (-1,1), (-1,5),
    % (1,0), and (1,4), all fully contained in the patch.
    \foreach \x/\y in {
    -5/6.5, -6/3, -4/4, -5/0.5, -1/5.5, -2/3, -3/-0.5,
    1/4.5, 0/3, -1/1.5, 3/5.5, 2/3, 1/0.5, 5/5.5,
    4/1, 6/0
    } {
      \path[black tile] (\x,\y) -- ++(1,-0.5) -- ++(-1,-0.5)
        -- ++(-1,0.5) -- cycle;
    }
  \end{scope}

  % The two tilings of a unit hexagon: one horizontal lozenge moves by
  % one vertical lattice spacing, while its two white neighbours rearrange.
  \begin{scope}[shift={(9,4.4)}]
    \path[white tile] (0,0) -- (-1,0.5) -- (-1,-0.5) -- (0,-1) -- cycle;
    \path[white tile] (0,0) -- (0,-1) -- (1,-0.5) -- (1,0.5) -- cycle;
    \path[black tile] (0,0) -- (1,0.5) -- (0,1) -- (-1,0.5) -- cycle;
  \end{scope}
  \begin{scope}[shift={(9,0.6)}]
    \path[white tile] (0,0) -- (-1,-0.5) -- (-1,0.5) -- (0,1) -- cycle;
    \path[white tile] (0,0) -- (0,1) -- (1,0.5) -- (1,-0.5) -- cycle;
    \path[black tile] (0,0) -- (-1,-0.5) -- (0,-1) -- (1,-0.5) -- cycle;
  \end{scope}
  \draw[->,line width=0.6pt] (8.6,3.15) -- (8.6,1.85)
    node[midway,left,font=\small] {$1$};
  \draw[->,line width=0.6pt] (9.4,1.85) -- (9.4,3.15)
    node[midway,right,font=\small] {$1$};
\end{tikzpicture}
\caption{Horizontal lozenges (black) form interlaced particles. The Glauber dynamics of a 3$d$ Ising interface at zero temperature corresponds to up/down jumps of rate $1$, subject to interlacement.}
\label{fig:lozenge_dynamics}
\end{figure}

Let us emphasise that the dynamics of lozenge tilings on the plane has in itself attracted significant attention, 
both in the reversible setting (where lozenges can jump, possibly by a distance larger than one,
with symmetric rates) and in the irreversible (asymmetric) one.
Several results appeared concerning classification of stationary measures \cite{TAoP}, 
mixing time bounds in finite domains \cite{Wilson,CMT, AT} and hydrodynamic 
limits \cite{LThydro,LegrasT}. Moreover, as \cref{fig:lozenge_dynamics} clearly suggests (and we discussed earlier), the evolution 
of lozenges can be equivalently described as the motion of a $(2+1)$-dimensional random surface 
which, in case of asymmetric jump rates, is especially relevant as its fluctuations 
are conjectured to lie within the KPZ universality class \cite{KPZ}. 
In this context, spatial dimension $d=2$ is special as Wolf \cite{W91} conjectured the existence of two 
universality classes depending on how the different directions of the 
slope influence the dynamics, the Anisotropic KPZ class (AKPZ), which is logarithmically superdiffusive (see \cite{CET}) 
and should display large-scale Gaussian statistics, and the Isotropic KPZ class (IKPZ), which 
should instead be polynomially superdiffusive (whose exact size and nature of the fluctuations though are not known). 
While several discrete dynamical surface models  
have been shown to belong to the AKPZ class (see e.g. \cite{TonRev, BorodinTonAKPZ}), 
for the IKPZ class, analysable examples are scarcer\footnote{a notable representative is the log-partition function
of directed polymers (see e.g. \cite{CSZ} for the fluctuations in the so-called intermediate disorder regime).}, 
and in either case no results on the fluctuations are currently available. As in dimension $1$ asymmetric
reflecting diffusions lead to polynomially superdiffusive behaviour for the corresponding 
height function \cite{SpohnSasamoto15_PointinteractingBrownian} 
(see also \cite{KaraPalShko16_SystemsBrownian} for a rigorous construction 
of the model), we believe the interface associated to the asymmetric version of the 
dynamical bead model should belong to the IKPZ class and thus deserves further investigation.

\subsubsection*{Our results.} When discussing the infinite system 
\eqref{eq:65} of interlaced Brownian particles, 
the first problem we have to face is the global  well-posedness of the infinite-volume dynamics (Theorem \ref{th:main1}).
This would be standard for either a system of finitely many reflecting particles,
as they can be re-written as a Brownian motion in a finite-dimensional
bounded convex domain with normal reflection at the boundary, or 
for an infinite system of interacting diffusions with sufficiently smooth interaction potential \cite{Fritz}. 
In our case, however, the
interaction is encoded in the reflection, resulting in a highly singular
potential. In order to prove existence and uniqueness of the solution of the
infinite system (at least when started from the invariant measure), we have to rule out finite-time explosion caused by chains of
collisions producing a propagation of information from infinity in finite
time. For this, we crucially use properties of the stationary measure $\bf P_\gamma$ (in
particular, bounds on the probability of anomalously large or small particle
gaps), together with disagreement percolation arguments inspired by \cite{TAoP}. 
Let us mention that systems of reflecting particles on the line (as those alluded to at the beginning of the previous section) 
possess a special feature, namely collisions can be interpreted as a reordering of particles.
In our two-dimensional case, on the other hand, this is not possible 
and the main difficulty turns out to be that of controlling the ``transversal'' collision chains, 
that is, those that propagate from one line to neighbouring ones, an issue that, 
clearly, does not arise in the one-dimensional setting.

Our second main result (\cref{thm:GK}) gives qualitative information on the
large-scale behaviour of the process. 
As is the case for other reversible stochastic interface models \cite[Sec. 9]{Spohn1993Interface}, we expect that
the height profile associated to the particle configuration converges, under
diffusive scaling, to a stochastic heat equation with additive noise of the form
\begin{eqnarray}
  \label{eq:conjectural}
  \partial_t \phi=\mu(\gamma) (\nabla \cdot \Sigma(\gamma)\nabla) \phi+ \sqrt{2\mu(\gamma)} \xi
\end{eqnarray}
with $\mu(\gamma)>0$ (mobility), $\Sigma(\gamma)$ a $2\times 2$ symmetric positive matrix (Hessian of the surface tension as a function of the interface slope), and $\xi$ a space-time white noise.
This has been proven for Ginzburg-Landau $\nabla\phi$ interface models with smooth, strictly convex potential \cite{Giacomin2001Equilibrium,Armstrong2022C2}, which is not the case for us as 
the potential enforcing the interlacement condition is either zero or infinity.
While we
do not prove convergence of the height process to \eqref{eq:conjectural}, Theorem \ref{thm:GK} 
identifies what the coefficients
of the limit equation should be.  
More precisely, we define a suitable diffusion
matrix $D(t)$ (see \eqref{eq:DiffMat}) and we prove not only that it is
 finite, non-degenerate and time-independent (giving strong evidence for 
a normal diffusive behaviour) but we  compute it explicitly, as a
function of the parameter defining the stationary measure. As discussed in Remark \ref{rem:conjectural} below, Theorem \ref{thm:GK} strongly suggests that   the matrix $\Sigma(\gamma)$
 in the conjectural scaling limit \eqref{eq:conjectural} 
coincides with the matrix $a(\gamma)$ defined in \eqref{eq:a.bead}, while $\mu(\gamma)=1/(2\pi)$. In particular, the mobility should be independent of $\gamma$, a phenomenon that occurs also for the Ginzburg-Landau model \cite{Giacomin2001Equilibrium}.

\subsection{Notation}
For integers $m\leq n$ we denote by $m{:}n$ the set $\{m, m+1,\dots,n-1,n\}$. 
For a Polish space $E$, we write $C^\infty_c(E)$ for the space of smooth functions of compact support, 
$C^\infty_0(E) \subset C^\infty_c(E)$ for the subspace of mean zero functions, and 
$C^\infty_p(E)$ for the space of smooth functions of at most polynomial growth.

\section{The model and main results}\label{sec:model}

\subsection{A system of interlaced, reflecting Brownian particles}

In this section, we give  the informal description of the dynamics as an infinite system of reflecting SDEs.  The actual rigorous construction is obtained in \cref{sec:fromLtoinfty}.

\begin{definition}\label{def:omega}
  A subset $\omega\subset F_\infty\eqdef \mathbb Z\times \mathbb R$ is said to be an \textit{interlaced particle configuration} if
  \begin{enumerate}
  \item [(i)] $\omega$ is locally finite;
  \item [(ii)] for every $\frakl\in \mathbb Z, $ the set $(\{\frakl\}\times \mathbb R)\cap \omega$ is unbounded in both vertical directions;
  \item [(iii)] given $(\frakl,y_1),(\frakl,y_2)\in\omega$ with $y_1<y_2$ such that there is no $(\frakl,y)\in\omega$ with $y\in(y_1,y_2)$, there exist $(\frakl-1,y_-),(\frakl+1,y_+)\in\omega$ such that $y_\pm\in(y_1,y_2)$.
  \end{enumerate}
  We let $\Omega$ denote the set of all interlaced particle configurations\footnote{note that if item (ii) holds for $\frakl=0$, then it holds for any other $\frakl$, thanks to item (iii).}, 
and identify $\omega \in \Omega$ with the induced counting measure $\delta_\omega \eqdef \sum_{\s \in \omega} \delta_\s$. 
We  consider the $\sigma$-algebra $\calF \eqdef \sigma(\omega \mapsto \delta_\omega(A) \colon A \in \calB(F_\infty))$.
Also, given $F\subset F_\infty$, we let  $\Omega(F)$ be the set of locally finite point measures on $F$, 
that are obtained as restrictions to $F$ of some point configuration $\omega\in \Omega$.
\end{definition}

We will interpret elements of $\omega\in\Omega$ as particles. To conveniently label them, we need the following definitions.

\begin{definition}\label{def:thegraph}
Let $Q_\infty$ be the graph
$$ Q_\infty \eqdef \big( (2 \bbZ) \times \bbZ \big) \cup \big( (2\bbZ + 1) \times (\bbZ + \thalf) \big) $$ where $\fraki=(\frakl,\frakk)\in Q_\infty$ and $\frakj=(\frakl',\frakk')\in Q_\infty$ are declared neighbors (and we write $\fraki\sim\frakj$) if $|\frakl-\frakl'|=1,|\frakk-\frakk'|=1/2$. For two neighbours $\fraki=(\frakl,\frakk),\frakj=(\frakl',\frakk') \in Q_\infty$, we say that $\fraki \le \frakj$ if $\frakk<\frakk'$. This induces a partial order on $Q_\infty $, where $\frak i\le \frak j$ if $\fraki = \frakj$ or if there exists a sequence $\fraki_0=\frak i,\fraki_1,\dots,\fraki_n=\frakj$ with $\fraki_k\sim\fraki_{k+1}$ and $\fraki_k\le \fraki_{k+1}$.
Given $\fraki\in Q_\infty$, we sometimes write $\frakl(\fraki),\frakk(\fraki)$ to denote its first and second components.
\end{definition}

\begin{definition}
Define the infinite dimensional convex set $D_\infty$ according to
\begin{equation}\label{eq:48}
D_\infty \eqdef \{ y \in \bbR^{Q_\infty } : y(\fraki) < y(\frakj) \text{ for all } \fraki, \frakj \in Q_\infty , \, \fraki \sim \frakj, \, \fraki \le \frakj; \lim_{\frakk\to\pm\infty}y(\frakl,\frakk)=\pm\infty \}, 
\end{equation}
and let $\DBar_\infty$ be obtained by replacing the strict inequalities by weak ones.
We define the corresponding space of point measures by
$$ \OmegaBar \eqdef \Big\{ \sum_{\fraki\in Q_\infty}\delta_{(\frakl(\fraki),y(\fraki))} : y\in\DBar_\infty \Big\}. $$
For $F\subset F_\infty$, we write $\OmegaBar(F)$ for the restrictions to $F$ of elements of $\OmegaBar$.
\end{definition}

\begin{remark}\label{rem:interchangeably}
It is easy to see that interlaced particle configurations $\omega \in \Omega$ are in bijection with elements $y \in D_\infty$. 
Given $y\in D_\infty,\fraki\in Q_\infty$, the value $y(\fraki)$ simply represents the vertical height of the bead with label 
$\fraki \in Q_\infty$ (see \cref{fig:bead_config_example}). To fix uniquely the labelling of points in $\omega\in \Omega$, 
we assign the label $(0,0)$ to the point $(0,y)\in\omega$ that has non-negative and minimal $y$ coordinate. 
In particular, it holds that
$$ \omega = \{ (\frakl, y(\frakl, \frakk)) : (\frakl, \frakk) \in Q_\infty \}. $$
In view of this bijection, we will interchangeably use $\omega$ or $y$ to denote an interlaced particle configuration. 
\end{remark}
\begin{figure}[htbp]
\centering
\includegraphics[height=4cm]{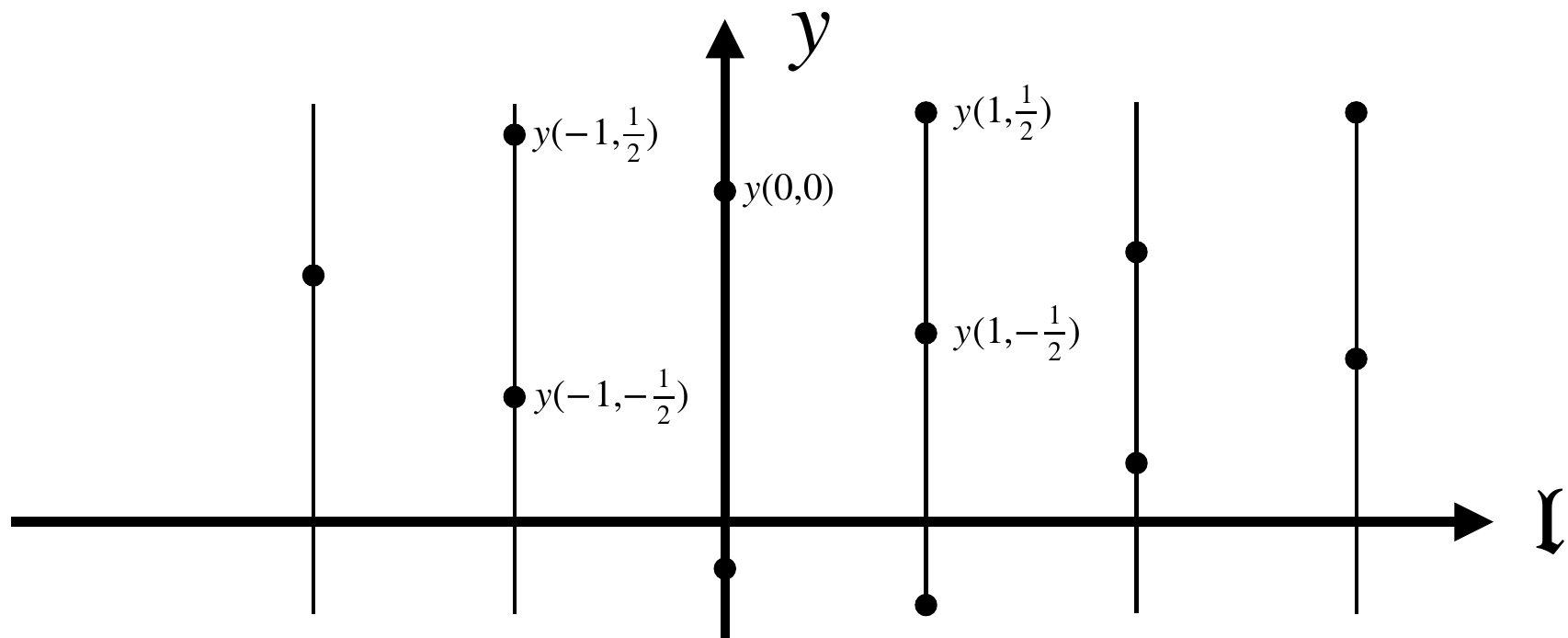}
\caption{An example configuration of beads with their corresponding labels.}
\label{fig:bead_config_example}
\end{figure}

The process of interest is a solution to an infinite-dimensional  diffusion  $Y=(Y_t)_{t\ge0}$ on $\DBar_\infty$.
It corresponds to an infinite system of coupled particle trajectories $(Y_t(\fraki)\colon\fraki\in Q_\infty,t\ge0)$, 
where the particle labelled $\fraki$ is subject to an independent Brownian noise 
and to reflecting interactions with particles labelled $\frakj$ with $\frakj\sim \fraki$, 
that guarantee that $Y_t\in\DBar_\infty$ at all times.

Informally\footnote{the rigorous construction of the process \eqref{eq:65} is discussed in Section \ref{sec:backgr-refl-sdes}
(for finitely many particles) and then extended in \cref{sec:fromLtoinfty} to the infinite system.
}, $Y=(Y_t)_{t\ge0}$ will be given by the infinite system of coupled SDEs \eqref{eq:65},   with $(\frakl, \frakk) \in Q_\infty $,
or, more compactly,
\begin{equation}\label{eq:65bis}
\dd Y_t(\fraki) = \dd W_t(\fraki) + \thalf\sum_{\frakj:\fraki\sim\frakj,\fraki\ge \frakj} \dd L_t(\frakj,\fraki) - \thalf\sum_{\frakj:\fraki\sim\frakj,\fraki\le \frakj} \dd L_t(\fraki,\frakj)
\end{equation}
in which $(W(\fraki) : \fraki \in Q_\infty )$ is an infinite family of independent Brownian motions 
(whose law will be denoted by $\mathbb P$)
and {for $\fraki\sim\frakj,\fraki\le\frakj$, $L_t(\fraki,\frakj) $ is the local time at zero of the non-negative semi-martingale
$Y_t(\frakj)-Y_t(\fraki)$, which (see e.g. \cite[Chap. IV, Section 7]{Protter2004}) 
is given by $L_t(\fraki,\frakj) = \int_0^t \ic\{Y_s(\frakj)-Y_s(\fraki) = 0\} \dd (Y_s(\frakj)-Y_s(\fraki))$.}
The terms $\dd L_t$  encode reflections between particles and guarantee that $Y_t\in\DBar_\infty$ at all times.
\medskip

As mentioned in the introduction, the process above can be translated into a $(2+1)$-dimensional 
random interface model. This is due to the fact that to each particle configuration $\omega\in\Omega$ 
we can associate a height function 
according to the procedure we now explain.

\begin{definition}[Height function and its gradient]\label{def:Height}
Given a configuration $\omega \in \Omega$, we define the {\it height function} $\ch=\ch(\omega)$ associated to $\omega$ 
as the map $\ch \colon \bbZ \times \bbR \rightarrow \thalf \bbZ$ constructed as follows. For $x=(\frakl,z)\in F_\infty$, 
$\ch(x)$ is the unique $\frakk \in \thalf \bbZ$ for which $z\in[y(\frakl,\frakk-1),y(\frakl,\frakk))$. Observe that $\ch$ takes values in $\mathbb Z$ on rows with even index $\frakl$ and takes values in $\mathbb Z+(1/2)$ on rows with odd index $\frakl$.

For $x=(\frakl,z)\in F_\infty$ we write $\nabla \ch(x)=( \nabla_1 \ch(x), \nabla_2 \ch(x))$ for the gradient of $\ch$ whose components are given by 
\begin{gather}
  \label{eq:nablaH}
   \nabla_1 \ch(x)=\ch(\frakl+1,z)-\ch(\frakl,z),\qquad \nabla_2 \ch(x)=\partial_z\ch(\frakl,z).
\end{gather}
\end{definition}

\begin{remark}\label{rem:Delta}
Note that $\partial_z\ch(\frakl,z)$ is a Dirac delta at every particle location on $\{\frakl\}\times \mathbb R$.
\end{remark}

\begin{remark}
If we wish to emphasise the dependence of $\ch$ on the configuration $\omega \in \Omega$, we shall write $\ch(\omega, x)$ in place of $\ch(x)$. 
\end{remark}

\begin{definition}[Pinned height function]
The height function $\ch$ of \cref{def:Height} is ``pinned at zero'' in that $\ch(0) = 0$. 
For $x_0 \in F_\infty$, we define $\ch^{x_0}$ to be the height function pinned at $x_0$, i.e. $\ch^{x_0} := \ch - \ch(x_0)$. 
Of course, the pinning is not seen at the level of the gradient, in that $\nabla \ch = \nabla \ch^{x_0}$ for all $x_0 \in F_\infty$. 
\end{definition}

\subsection{The (static) bead model}
\label{sec:thebeadmodel}
It turns out (see \cref{sec:fromLtoinfty}) that the process $Y$ admits a class of stationary measures $\bfP_\gamma$, parameterised by $\gamma \in (-1, 1)$. The measures $\bfP_\gamma$ on   $(\Omega, \calF)$ were constructed
in \cite{BoutillierBeads} (see also \cite{Johnston23_ContinuousKasteleyn}) as limits of Gibbs measures of 
the dimer model on periodic, bipartite graphs, and they define the so-called {\sl bead model}.  
The main features of $\bfP_\gamma$ we will need in the rest of the paper are 
\begin{enumerate}
\item the distribution $\bfP_\gamma$ is  invariant and  ergodic under translations of $F_\infty$,
\item\label{itm:bead-gibbs} given $\omega_0\in \Omega$ and a bounded subset $\Lambda\subset F_\infty$, the conditional measure $$\bfP_\gamma(\cdot|\omega\restr_{\Lambda^\complement}=\omega_0\restr_{\Lambda^\complement})$$ is the  uniform measure on elements of $\Omega$ that coincide with $\omega_0$ outside of $\Lambda$. In particular, the conditional measure does not depend on the parameter $\gamma$,
\item under $\mathbf{P}_{\gamma}$, $\omega$ is a  determinantal point process on $F_\infty$ whose kernel 
   $K_{\gamma}\colon  F_\infty\ni(\frakl,y)\mapsto K_\gamma(\frakl, y)\in\mathbb R$ is given by (see~\cite[Lemma 1]{BoutillierBeads}) 
   \begin{equation}\label{eq:J}
  K_{\gamma} (\frakl, y) =
  \begin{cases}
    \int_{[- 1, 1]} e^{- i y \varphi} \Big( \gamma + i \varphi \sqrt{1 -
    \gamma^2} \Big)^\frakl \frac{\dd \varphi}{2 \pi} & \text{if} \quad \frakl \geqslant 0\\
    
    - \int_{\bbR \setminus [- 1, 1]} e^{- i y \varphi} \Big( \gamma + i
    \varphi \sqrt{1 - \gamma^2} \Big)^\frakl \frac{\dd \varphi}{2 \pi} & \text{if}
    \quad \frakl < 0.\\
  \end{cases}
\end{equation}
This means that for every $n\in\mathbb N$, the $n$-point correlation function\footnote{Informally, the probability to have a particle in $(\frakl_i,dy_i)$ for all $i\le n$ is given by $\rho_n((\frakl_1,y_1),\dots,(\frakl_n,y_n))dy_1\dots dy_n$. See \cite{Soshnikov00_DeterminantalRandom} for a rigorous definition.} $\rho_n$ at $ (\frakl_i,y_i)\in  F_\infty,\,i\le n$ equals
\begin{equation}\label{rhonJ}
\rho_n((\frakl_1,y_1),\dots,(\frakl_n,y_n))\eqdef  \det \Big((K_{\gamma} (\frakl_i - \frakl_j, y_i - y_j))_{1\le i,j\le n}\Big)\ge0.
\end{equation}

\end{enumerate}

The $n$-point correlation function allows to express all joint moments of the number of points in disjoint Borel  subsets of $F_\infty$ \cite[Eq.\ (1.5)]{Soshnikov00_DeterminantalRandom}. In particular,  given $n\in\mathbb N$ and $B_i=\{\frakl_i\}\times A_i, i=1,\dots,n$  disjoint  subsets of $F_\infty$ (with $A_i$ Lebesgue-measurable), 
we have
\begin{equation}\label{eq:soshni}
  \bfE_\gamma\Big(\prod_{i=1}^n{\# {B_i}}\Big)=\int_{A_1\times\dots\times A_n} \rho_n((\frakl_1,y_1),\dots,(\frakl_n,y_n))
  \dd y_1\dots \dd  y_n,
\end{equation}
where, for each $i$, $\dd y_i$ is the Lebesgue measure on $\bbR$, and
$\# {B_i}$ denotes the number of points in $B_i$.
As a simple consequence,
\begin{equation}\label{eq:soshni2}
  \bfP_\gamma\Big({\# {B_i}\ne 0},\forall i=1,\dots,n\Big)\le\int_{A_1\times\dots\times A_n} \rho_n((\frakl_1,y_1),\dots,(\frakl_n,y_n))
  \dd y_1\dots \dd y_n.
\end{equation}

Other important facts from \cite{BoutillierBeads} include 
  \begin{enumerate}[label=(\alph*)]
  \item
    from \eqref{eq:soshni} and \eqref{eq:J}, it follows that the particle density is $K_\gamma(0,0)=\frac1\pi$, that is, $\bfE_\gamma(\# {\{\frakl\}\times A})=\frac1\pi\Leb(A)$, with $\Leb$ the Lebesgue measure on $\mathbb R$;
  \item for any $\frakl\in\mathbb Z$,   the restriction  $\omega\cap (\{\frakl\}\times \mathbb R)$ is the sine  point process \cite{dyson1962statistical};

  \item the parameter
   $\gamma$ fixes the average of the distance  $y(1,1/2)-y(0,0)$ between a particle and its top-right neighbour;

\item
  under $\bfP_\gamma$, the two-point correlation function 
  \begin{equation}\label{e:TPDecay}
  \rho_2((0,0),(\frakl,y))-\rho_1(0,0)\rho_1(\frakl,y)=\rho_2((0,0),(\frakl,y))-\frac1{\pi^2}\asymp(\frakl^2+y^2)^{-1}\,,
  \end{equation}
as  $\frakl^2+y^2\to\infty$.
\end{enumerate}

At last, it is possible to prove (but we will not need this fact, so we restrain from giving details) that, as is the case for the dimer model \cite{FTnotes}, the scaling limit of the centred  height field $\ch-\mathbb E_\gamma(\ch)$ under $\mathbb P_\gamma$ is a Gaussian Free Field $\psi$ on $\mathbb R^2$ with covariance
\begin{eqnarray}\label{eq:covpsi}
  \mathbb E(\nabla\psi(x)\cdot a(\gamma)\nabla\psi(y))=\delta(x-y), 
\end{eqnarray}
where $a(\gamma)$ is the matrix
\begin{equation}\label{eq:a.bead}
a(\gamma) \eqdef \pi \begin{pmatrix}
1/(1-\gamma^2) & \gamma / \sqrt{1 - \gamma^2} \\
\gamma/\sqrt{1 - \gamma^2} & 1
\end{pmatrix}.
\end{equation}

\subsection{Main results}
Thanks to the definitions given above, we are now ready to state the main results of the paper. 
First, we show that, provided the initial condition of our dynamics is sampled according to $\bf P_\gamma$ 
then the infinite system of SDEs in \eqref{eq:65} admits a unique solution and such solution has $\bf P_\gamma$ 
as stationary measure. The theorem we here state will be made more precise in Corollary \ref{cor:1} below. 

\begin{theorem}\label{th:main1}  Let $\gamma\in(-1,1)$. For $\bfP_\gamma$-almost every initial condition $y_0\in D_\infty$,
  there exists a  strong solution $Y=\{Y_t(\fraki),\fraki\in Q_\infty\}_{t\ge0}$ to \eqref{eq:65}, with initial condition $Y_0=y_0$. 
  We define $Y$ to be the dynamical bead model. Moreover, the measure
   $\bfP_\gamma$ is stationary.
\end{theorem}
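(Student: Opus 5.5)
The plan is to build the infinite-volume dynamics as a limit of finite-volume reflecting systems and to use stationarity of $\bfP_\gamma$ to rule out influence propagating from infinity in finite time.

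\textbf{Finite volume.} For a finite box $\Lambda_L\subset Q_\infty$ (say $|\frakl|\le L$, $|\frakk|\le L$), freeze all particles outside $\Lambda_L$ at their initial positions $y_0$. Let only those in $\Lambda_L$ evolve by \eqref{eq:65bis}, with reflection also off the frozen neighbours. This is normally reflected Brownian motion in a bounded convex polyhedron $D_L(y_0)\subset\bbR^{\Lambda_L}$. Strong existence and pathwise uniqueness hold by the classical Skorokhod/Tanaka theory for convex domains. The Lebesgue measure on $D_L(y_0)$ is reversible, since the reflection is normal and the generator is $\frac12\Delta$ with Neumann boundary conditions. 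Item~\ref{itm:bead-gibbs} says the conditional law of $\bfP_\gamma$ given the outside is uniform on $D_L(y_0)$. Hence, if $y_0\sim\bfP_\gamma$, the finite-volume process $Y^{L}$ started from $y_0$ is stationary with marginal $\bfP_\gamma$ for each $L$.

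\textbf{Consistency and the limit.} Use the same Brownian motions $W(\fraki)$ for all $L$. I want to show that, almost surely, for every finite set $A$ and every $T$, the restriction $Y^L\restr_{A\times[0,T]}$ is eventually constant in $L$. The idea is that $Y^L$ and $Y^{L'}$ for $L'>L$ can differ inside $A$ only if a disagreement travels from $\partial\Lambda_L$ to $A$. Such a disagreement can only pass through actual contacts: it crosses from $\fraki$ to $\frakj\sim\fraki$ only if the two particles collide during $[0,T]$ in one of the two processes.

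To control this, I would build a disagreement-percolation structure in the spirit of \cite{TAoP}. Declare a space-time block \emph{good} if the initial gaps between nearby neighbours are not too small, and the Brownian oscillations over a short time window $\delta$ are smaller than those gaps. In a good block no collision occurs, provided the neighbours are themselves not pushed. Under the stationary measure, the gap estimates needed are tail bounds on anomalously small or large gaps. These follow from \eqref{eq:soshni2} and the determinantal kernel \eqref{eq:J}: for example, the probability of two particles within distance $\epsilon$ on adjacent lines is $O(\epsilon^2)$ or smaller, and large empty regions in a sine process have Gaussian-type tails. Combined with Brownian tail bounds, these make the probability that a block is bad small.

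The key point is that $Y^L$ is stationary for every $L$, so these bounds hold uniformly in $L$ at every time $t\in\{0,\delta,2\delta,\dots\}$. A Peierls/path-counting argument then shows that long chains of bad blocks crossing time windows of length $\delta$ have summable probability. Iterating over the $T/\delta$ windows, disagreements reach $A$ only from a finite, random distance. This yields an a.s.\ limit $Y=\lim_L Y^L$ on finite sets. The local times converge with it, since they are eventually constant, so $Y$ solves \eqref{eq:65} strongly and is adapted to $W$ and $y_0$.

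\textbf{Stationarity and the main obstacle.} Stationarity of $\bfP_\gamma$ follows because each $Y^L_t\sim\bfP_\gamma$ and $Y^L_t\to Y_t$ locally a.s. Fubini then moves from the annealed statement to ``$\bfP_\gamma$-almost every $y_0$''. The main obstacle is the consistency step: controlling \emph{transversal} collision chains, where a push passes from line $\frakl$ to $\frakl\pm1$ and back. Such chains can in principle be long and fast, and the block estimates must handle them uniformly in $L$. I would first try a monotone coupling: reflection dynamics with shared noise should be order-preserving in the partial order on $\DBar_\infty$. With it, the discrepancy between $Y^L$ and $Y^{L'}$ could be sandwiched and a chain would need a sequence of genuinely small gaps. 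The determinantal two-point bounds \eqref{eq:soshni2} would then make this summable over paths.
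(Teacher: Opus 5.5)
\textbf{Where your plan matches the paper.} The architecture is the same: truncate, use stationarity of the truncated dynamics, and trace a disagreement back through a collision chain. The chain must be long because gaps are at most $\log L$. Chains of small gaps are ruled out by the determinantal bounds, and you union-bound over time windows of length $\delta$.

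\textbf{The gap: no control on pushed beads.} It sits exactly where you write ``provided the neighbours are themselves not pushed''. Your good blocks control only the noise $W$. They do not control the actual displacement $Y_t(\fraki)-Y_0(\fraki)$, which also contains the local-time pushes.
\begin{itemize}
\item Good blocks do not stop a disagreement. The first collision involving a good block must come from a boundary neighbour. That neighbour may have been pushed a long way, and it then pushes onward through the block, so the Peierls count of bad blocks does not bound the propagation.
\item Likewise, nothing in your plan stops beads from travelling a distance of order $L$ in time $T$. Your ``chain must be long'' step needs exactly this.
\end{itemize}

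\textbf{How the paper fills it.} The missing ingredient is a Gaussian tail for the true displacement of a single bead in the interacting system, losing only a polynomial factor in the number of beads. This is \cref{lem:2}: $\bbP(\sup_{t\le\delta}|Y_t(\fraki)-Y_0(\fraki)|\ge\varepsilon)\le c|Q_F|e^{-\varepsilon^2/2\delta}$. The paper proves it in two steps.
\begin{itemize}
\item Monotonicity with respect to the setup, not just the initial data (\cref{thm:monotone}). Deleting every bead not below $\fraki$ can only raise $Y(\fraki)$. So can bunching the remaining beads upward against one common lower barrier.
\item The rewiring identity in law (\cref{thm:yeqz}). For such a single-barrier system, the Skorokhod solution has the law of a relabelling of independent reflected Brownian paths. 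Hence the upward increment of the top bead $\fraki$ is dominated by the largest increment among $|Q_F|$ such paths.
\end{itemize}
With this, \cref{thm:collisions} closes. Consecutive collisions along a chain of $M$ beads in $[0,\delta]$ force one of two things. Either some bead moves $\varepsilon/2$, or all $M-1$ initial neighbour gaps are at most $\varepsilon$. By \cref{lem:1} the second has probability at most $|F|\varepsilon^{M-1}(MC)^M$. Optimising in $\varepsilon$ gives $e^{-1/(C\delta M^2)}$.

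\textbf{Your proposed monotone coupling.} Order preservation in the initial data with shared noise cannot directly ``sandwich'' $Y^L$ against $Y^{L'}$. These two processes differ in their active sets, not in their initial data. What is needed is monotonicity in the setup, used to dominate the displacement of one bead rather than to compare truncations.

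\textbf{Finite-volume stationarity fails for a label box.} Your claim is wrong for the box $|\frakl|,|\frakk|\le L$. The Gibbs property of $\bfP_\gamma$ concerns conditioning on the configuration outside a bounded \emph{spatial} region. In your label box the anchor beads $(0,-1)$ and $(0,0)$ are active. Conditioning on the frozen beads therefore also imposes $y(0,-1)<0\le y(0,0)$. The conditional law is then uniform on $D_L(y_0)\cap\{y(0,-1)<0\le y(0,0)\}$, and the reflected dynamics does not preserve this. For comparison, in a one-dimensional analogue (Poisson points, $2L+1$ labelled points active) the intensity at the origin tends to $(2L+1)/(2L+2)$ as $t\to\infty$. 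Your uniform-in-$L$ bounds at the times $k\delta$ rest on this stationarity. So truncate spatially, as the paper does: activate the beads initially in $F_L$ and add walls at $\partial F_L$.
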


\begin{remark}\label{rem:etaY}
In light of \cref{rem:interchangeably}, the solution $\{Y_t(\fraki),\fraki\in Q_\infty\}_{t\ge0}$ of the previous statement 
defines a trajectory on $\OmegaBar$, that we will denote as $\{\eta_t\}_{t\ge0}$. 
\end{remark}

The process $Y$ will be constructed as the almost-sure limit $L\to\infty$ of a system of finitely many 
SDEs solving \eqref{eq:65} but in which particles with index $(\frakl,\frakk)$ outside a finite set $Q_L$ 
(that diverges as $L\to\infty$) are fixed for all times. 
A byproduct of this construction is the following correlation decay property 
(see \cref{rem:quasi-local}):
for any $\gamma\in(-1,1)$ and any bounded local functions $f,g\colon \Omega\to \mathbb R$ and $T>0$, there exists $C=C(\gamma,T)>0$ such that
\begin{align}
  \label{eq:corrdecay-informal}
  \Big|\mathbb E_\gamma\big[(f(Y_t)-f(Y_0))g(\tau_x Y_0)\big]
  \Big|\lesssim e^{-C (\log |x|)^2}
\end{align}
for every $t\in[0,T]$. Here, $\tau_x$ denotes  space translation by $x\in F_\infty$. Note that the decay above is much faster than that of correlations in the stationary measure $\bfP_\gamma$, 
(see~\eqref{e:TPDecay}).
\medskip

Our second main result focuses instead on the large-scale space-time behaviour of the random interface $\ch$ 
associated to the dynamical bead model (see Definition~\ref{def:Height}) 
and determines the behaviour of its (suitably defined) diffusivity matrix. 
To state it, let us introduce the correlation function of the slope of $\ch$ whose average at fixed time 
will be shown in Appendix \ref{computeXi} to be 
\begin{eqnarray}\label{eq:U}
\bfE_\gamma\big[\nabla \ch(x)\big]=\frac1\pi\Big(-\arctan\Big(\frac\gamma{\sqrt{1-\gamma^2}}\Big),1\Big)\,.
\end{eqnarray}
For $x\in F_\infty$, $i,j\in\{1,2\}$, 
let $S_{t, ij}$ be defined according to 
\begin{equation}\label{eq:Stij}
 S_{t,i j}(x) \eqdef 
 \bbE_\gamma\big[( \nabla_i \ch(\eta_t, x) - \nabla_i \ch(\eta_0, x) ) \nabla_j \ch(\eta_0, 0) \big]\,.
\end{equation}
where $ \bbE_\gamma$ is the expectation with respect to $\bbP_\gamma\eqdef\bfP_\gamma\otimes\bbP$ 
and $\bbP$ is the law of the countable family of Brownian motions driving the dynamics of $Y$.
Further, for $a(\gamma)$ as in \eqref{eq:a.bead}, set
\begin{equation}\label{eq:Stilde}
S_t(x) \eqdef \sum_{i,j=1}^2 a(\gamma)_{ij} S_{t,i j}(x)\,.
\end{equation}
Let us point out that $S_t$ is defined as a distribution and not as a pointwise function as so is $\nabla_2\ch$ 
(see Remark~\ref{rem:Delta}). The rigorous distributional definition is given in \cref{def:Diffu}.

\begin{remark}
The rationale behind the choice made in \eqref{eq:Stilde}, in which we mix derivatives with the specific weighting 
$a(\gamma)$, can be better appreciated by considering the actual two-point correlation 
functional $V_t(x) := \sum_{i,j=1}^2 a(\gamma)_{ij} V_{t,i j}(x)$ for
$$V_{t, ij}(x):=\bbE_\gamma[( \nabla_i \ch(\eta_t, x) - \bfE_\gamma[\nabla_i \ch(x)]) \nabla_j \ch(\eta_0, 0) ]$$
(note that $ S_{t,i j}(x) = V_{t, ij}(x) - V_{0, ij}(x) $).
Indeed at $t=0$, $V_0(\cdot)$ decays sufficiently fast so that, in Fourier transform,
  \begin{eqnarray}
    \label{eq:compressibility}
  \hat V_0(p)=1+o(1)\quad \text{for} \quad p\to0\,.
  \end{eqnarray}
In contrast, the individual $V_{0,ij}(x)$ have a worse decay as $x \rightarrow \infty$, which results in $\hat V_{0,ij}(p)$
not being continuous at zero (see Proposition \ref{prop:FT}) making the corresponding expression for $S$ meaningless.  
\end{remark}

\begin{remark}
Continuing from the previous remark, \eqref{eq:compressibility} can also be interpreted as saying  
that the static compressibility at equilibrium \cite{Spohn91_LargeScale} equals $1$. 
If this were not the case, then an additional normalisation would be required in the definition of $S$.
\end{remark}

We are now ready to define the diffusivity matrix 
 $D(t)=(D_{IJ}(t))_{ I, J \in \{1, 2\}}$, which is given by
\begin{equation}\label{eq:DiffMat}
D_{IJ}(t) \eqdef \frac{1}{t} \int_{\bbZ \times \bbR} x_I x_J S_t(x) \dd x, \qquad t>0\,.
\end{equation}
We will prove in \cref{sec:thediffusivity} that $S_t(\cdot)$ decays faster than any power, so that in particular $D_{IJ}(t)$ is well defined.

\begin{theorem}\label{thm:GK}
The diffusivity matrix is given by $D(t) = a(\gamma) / \pi$, with $a(\gamma)$ defined in \eqref{eq:a.bead}. 
In particular, it is constant in time.
\end{theorem}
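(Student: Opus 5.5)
The strategy is a Green--Kubo type computation: express the second moment of $S_t$ through a time integral of current--current correlations, and use stationarity, reversibility and the locality of the currents to show that only a static term survives.

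\textbf{Step 1: conservation laws.} Write the dynamics of the slope field as discrete/continuous conservation laws. By \eqref{eq:65bis}, for a test function $\varphi$ on $F_\infty$, the pairing $\langle \nabla_i \ch(\eta_t),\varphi\rangle$ is a semimartingale. Since $\partial_z\ch(\frakl,\cdot)=\sum_\frakk\delta_{Y_t(\frakl,\frakk)}$, we have
\[
\langle \nabla_2\ch(\eta_t),\varphi\rangle=\sum_{\fraki}\varphi(\frakl(\fraki),Y_t(\fraki)).
\]
Itô's formula then gives a martingale part $\sum_\fraki \partial_z\varphi\, \dd W_t(\fraki)$, a drift $\tfrac12\sum_\fraki\partial_z^2\varphi\,\dd t$, and local-time terms. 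At a collision $Y(\fraki)=Y(\frakj)$, the local-time terms combine into $\tfrac12[\partial_z\varphi(\frakl(\fraki),Y)-\partial_z\varphi(\frakl(\frakj),Y)]\,\dd L(\fraki,\frakj)$, which is a discrete $\nabla_1$-divergence. The $\nabla_1$ component is handled the same way, or directly via the curl-free relation $\partial_z\nabla_1\ch=\nabla_1\nabla_2\ch$. The result is that $\nabla\ch(\eta_t)-\nabla\ch(\eta_0)$ equals $\nabla$ of a scalar "height increment"
\[
\ch(\eta_t,x)-\ch(\eta_0,x)=\int_0^t j_s(x),
\]
where $j$ is a (distributional) local current consisting of a Brownian part, a Laplacian part and a local-time part.

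\textbf{Step 2: Fourier reduction.} Pass to Fourier variables. Using the fast decay of $S_t$ (from the correlation-decay estimate \eqref{eq:corrdecay-informal}, assumed proved in \cref{sec:thediffusivity}), one has
\[
D_{IJ}(t)=-\tfrac1t\,\partial_{p_I}\partial_{p_J}\hat S_t(0).
\]
Since $\hat S_{t,ij}(p)=p_ip_j$ (up to sign and phase conventions) times $\hat C_t(p)$, where
\[
C_t(x)=\bbE_\gamma\big[(\ch(\eta_t,x)-\ch(\eta_0,x))\,\nabla\cdot\ \text{stuff}\big],
\]
we obtain
\[
\hat S_t(p)=(p\cdot a(\gamma)p)\,\hat\Phi_t(p),
\]
with $\Phi_t$ the correlation of the height increment with the height field. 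Hence
\[
D(t)=-\tfrac{2}{t}\,a(\gamma)\,\hat\Phi_t(0).
\]
This reduces the theorem to showing that $\hat\Phi_t(0)=-t/(2\pi)$, i.e.\ that the zero mode of the height-increment/height correlation is linear in time with slope $1/(2\pi)$.

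\textbf{Step 3: stationarity and reversibility.} Compute $\hat\Phi_t(0)$ using stationarity and reversibility of $\bfP_\gamma$ (\cref{th:main1}). Pairing the increment with $\ch(\eta_0)$, we use two facts:
\begin{itemize}
\item the martingale contributions vanish;
\item time reversal symmetry turns $\bbE[(\ch_t-\ch_0)\ch_0]$ into $-\tfrac12\bbE[(\ch_t-\ch_0)^2]$ (spatially summed).
\end{itemize}
The increment variance splits into the martingale quadratic variation plus a current-current term. Because the current is itself a gradient (the local-time part is a $\nabla_1$-divergence, the drift a $\partial_z$-derivative), its zero-mode contribution is a total derivative. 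Its time integral is therefore bounded uniformly in $t$ at $p=0$, and in fact vanishes after summing in space.

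What survives is the quadratic variation. Per unit time and unit area, this equals the particle density $1/\pi$ times $\tfrac12$ from the normalisation, giving $t/(2\pi)$ (this matches $\mu=1/(2\pi)$). Combining with \eqref{eq:compressibility}, which ensures no extra static normalisation is needed, yields $D(t)=a(\gamma)/\pi$.

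\textbf{Main obstacle.} The hardest step is the rigorous justification of these manipulations in infinite volume. This includes:
\begin{itemize}
\item making sense of the local-time currents as distributions with controlled moments;
\item exchanging $\partial_p^2$ at $p=0$ with the expectation;
\item showing that the current-current term contributes nothing rather than merely $O(1)$.
\end{itemize}
For the first two, I would approximate by the finite-volume dynamics on $Q_L$ and use the disagreement-percolation decay \eqref{eq:corrdecay-informal} to control boundary effects uniformly in $L$. For the third, I would exploit exactly the gradient structure of the currents, which in finite volume makes the spatial sum telescope.
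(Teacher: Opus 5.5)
Your formal architecture is sound, and it differs from the paper's. Write $J_t(x)$ for $J((\eta_s)_{s\in[0,t]},x)$.

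\textbf{Your route.} Reversibility gives $u^{x_0}_t(x)=\tfrac12\bbE[J_t(0)J_t(x)]-\tfrac12\bbE[J_t(0)J_t(x_0)]$. Next, $\int\chi_R J_t$ splits into the martingale $M_t[\chi_R]=\sum_\fraki\int_0^t\chi_R(Y_s(\fraki))\,\dd W_s(\fraki)$ plus drift and local-time terms that telescope to $\partial F_R$. Everything then reduces to an It\^o isometry giving $t/\pi$, hence $U_t[1]=t/(2\pi)$.

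\textbf{The paper's route.} It never evaluates a same-window martingale correlation. It cuts $[0,t]$ into steps of length $\kappa$ (\cref{prp:1}). The martingale part of the off-diagonal (dynamic) terms is killed by the Markov property. The diagonal (static) term is computed through the Girsanov/tilted-measure identity $U^L_t=\tfrac12\bbE[\sum_\fraki W_t(\fraki)J^L_t]$, and only in the limit $\kappa\to0$, where $J_\kappa$ is compared with a free-particle current.

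\textbf{The gap.} It lies in the rigorous core you defer. Every term you discard is a correlation between the current at the origin and a \emph{path functional} located far away, over the same time window:
(i) the pinning term $\bbE[J_t(0)J_t(x_0)]$;
(ii) the telescoped finite-variation part of $\int\chi_R J_t$, which does not vanish in finite volume. It is a sum of $O(R)$ local-time and drift contributions near $\partial F_R$, plus collisions with frozen beads and walls if you telescope over the whole truncated system;
(iii) the cross term $\bbE[J_t(0)M_t[\chi_R]]$, which is not a quadratic variation because $J_t(0)$ is not a semimartingale. It can only be evaluated by moving the cutoff onto $J$ via translation invariance, and then showing $\bbE[M_t[\psi]\cdot(\text{boundary terms})]\to0$ for a mollifier $\psi$ at the origin.

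The estimate you invoke, \eqref{eq:corrdecay-informal}, gives none of these. It only controls correlations with a function of the initial configuration, and its proof uses that this function sits in the frozen region of the truncated dynamics, so it is constant in time. Telescoping alone merely reduces the problem to boundary terms; it does not show their correlation with $J_t(0)$ is negligible.

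\textbf{What is missing.} You need the annulus decoupling of \cref{lem:4} together with \cref{thm:annulus}. Freeze the beads in $F_{L/4}\setminus F_{L/5}$. Conditional on that configuration, the inner and outer dynamics are independent. The inner current (or inner martingale) has zero conditional mean by stationarity of the inner dynamics. Hence its correlation with any outer path functional is $O(e^{-C(\log L)^2})$. With this lemma, extended to martingale functionals near the origin, your route would close and give $D(t)=a(\gamma)/\pi$ without the $\kappa$-splitting or Girsanov. Without it, steps (i)--(iii) are unjustified.
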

\begin{remark}
\label{rem:conjectural}
For the (conjectural) limit process \eqref{eq:conjectural}, the matrix $D$ can be easily shown to equal $2\mu(\gamma)\Sigma(\gamma)$. Comparing with Theorem \ref{thm:GK}, this implies $\mu(\gamma)\Sigma(\gamma)=\frac1{2\pi} a(\gamma)$. On the other hand, from \eqref{eq:covpsi} and \cite[Eq. (9.2)]{Spohn1993Interface} it follows that $\Sigma(\gamma)=a(\gamma)$, which implies $\mu(\gamma)=1/(2\pi)$.
\end{remark}

\subsection*{Organization of the paper}
In \cref{sec:finite-volume-dynam} we solve the truncated system by classical means, giving a family of coupled processes parameterised by the truncation parameter $L \in \bbN$. We then prove two properties of the truncated system: monotonicity in \cref{sec:backgr-refl-sdes}, and an identity in law between the Skorokhod solution and the so-called ``rewiring'' solution in \cref{sec:rewiring-solution}.
In \cref{sec:fromLtoinfty} we show that as the truncation is removed, the solutions converge to a natural limit, 
culminating in a proof of \cref{th:main1} (given in the form of \cref{cor:1}). 
This is achieved by controlling the number of collisions, as in \cref{thm:collisions},
and then using this to control correlation decay, as in \cref{thm:prob.decay}. 
Finally, in \cref{sec:goal-green}, we use a Green--Kubo formula to compute the diffusivity. In \cref{sec:we-now-decompose} we decompose into static and dynamic terms, and then in \cref{sec:static-term} and \cref{sec:dynamic-term} respectively we compute the static term, and show that the dynamic term is equal to zero. Together, this yields \cref{thm:GK}, whose proof can be found at the end of \cref{sec:we-now-decompose}.

\section{The finite-volume dynamics}\label{sec:finite-volume-dynam}

The goal of this section is to introduce a finite-volume dynamics which will serve as 
an approximation to the infinite system in \eqref{eq:65}. Its well-posedness comes from the 
classical theory of reflecting diffusions on a (convex) domain, where the domain 
in our case will be chosen so to respect the interlacement. 

We will first consider a rather general finite particle system in~\cref{sec:backgr-refl-sdes} 
for which we provide two alternative but equivalent (in law) constructions and then 
define the {\it truncated} and {\it annulus dynamics} in~\cref{sec:truncated-dynamics}, 
on boxes of size $L$, whose limit as $L\to\infty$ will be the particle system we aim at studying.

\subsection{The finite particle system and monotonicity}\label{sec:backgr-refl-sdes}

Let $Q \subset Q_\infty$ be a finite subset. The truncated dynamics we want to introduce will 
only involve beads whose label lies in $Q$ and thus, we will refer to $\fraki \in Q$ 
as an \textit{active bead} (or particle) and to $\fraki \notin Q$ as an {\it inactive bead}. 
Alongside the internal reflections between active particles described in \eqref{eq:65}, 
the finite particle system has boundary reflections either with respect to inactive ones (so that they obey the interlacement) 
or with artificial boundaries. 
To distinguish them, let $P$ be the set of ordered pairs of neighbouring active particles and
$M$ be a family of parameters determining the spatial boundary conditions, i.e. 
\begin{align}
P&= P(Q) \eqdef \{ (\fraki, \frakj) : \fraki, \frakj \in Q, \, \fraki \sim \frakj, \, \fraki \le \frakj \}\,,\\
M &= M(Q) \eqdef \{M^\pm(\fraki)\in\bbR \cup \{\pm\infty\} : \fraki \in Q\text{ and }M^-(\fraki) \le M^+(\fraki)\}\,.
\end{align}
For $\frakp= (\fraki, \frakj)\in P$ and $\fraki \in Q$, define the half-spaces $H_\frakp,\, H_\fraki^\pm \subset \bbR^Q$
\begin{gather}
H_\frakp \eqdef \{y \in \bbR^Q :  y(\fraki) \le y(\frakj) \}\,, \\
H_\fraki^+ \eqdef \{y \in \bbR^Q : y(\fraki) \le M^+(\fraki)\}, \quad H_\fraki^- \eqdef \{y \in \bbR^Q : y(\fraki) \ge M^-(\fraki)\} 
\end{gather}
and set 
\begin{equation}\label{eq:72}
\calH \eqdef \{ H_\frakp : \frakp \in P \} \cup \{ H^+_\fraki : \fraki \in Q \} \cup \{ H^-_\fraki : \fraki \in Q \}\,.
\end{equation}
whose elements $H \in \calH$ will be said to be \textit{constraints}. Viceversa, 
we denote by $Q_H\subset Q$ the labels involved in the constraint $H\in\calH$, i.e. 
\begin{equation}\label{e:QH}
Q_H\eqdef
\begin{cases}
\{\fraki, \frakj\}\,,& \text{if $H = H_\frakp$ for $\frakp=(\fraki,\frakj) \in P$,}\\
\fraki\,,& \text{if $H = H^\pm_\fraki$ for $\fraki\in Q$\,.}
\end{cases}
\end{equation}
For $H \in \calH$ define $\bfn_H$ to be the inward pointing unit normal, and $a_H \in \bbR \cup \{\pm\infty\}$ to be the unique constant such that
\begin{equation}\label{eq:28}
H = \{ y \in \bbR^Q : y \cdot \bfn_H \ge a_H \}, \quad \partial H = \{ y \in \bbR^Q : y \cdot \bfn_H = a_H \}.
\end{equation}
If $y \in \partial H$ then we say the constraint $H \in \calH$ is \textit{active}. 
At last, let $\DBar_\calH \subset \bbR^{Q}$ be the convex set where the dynamics lives and $\partial \DBar_\calH$ be its boundary, i.e. 
\begin{equation}\label{eq:8}
\DBar_\calH \eqdef \bigcap_{H \in \calH} H, \quad\text{and}\quad \partial \DBar_\calH \eqdef \DBar_\calH \cap \Big(\bigcup_{H \in \calH} \partial H\Big)\,.
\end{equation}
We say that the couple $(Q,M)$ is a {\it setup} provided the corresponding $\text{int}(\DBar_\calH)\neq\emptyset$. 
We further introduce the set $\partial^2 \DBar_\calH \subset \partial \DBar_\calH$ as the subset in which strictly more 
than one constraint is active. If $y \in \partial \DBar_\calH$ we say that $y$ has a \textit{double point} 
and if $y \in \partial^2 \DBar_\calH$ we say that $y$ has a \textit{multiple point}.

For $y \in \partial \DBar_\calH$, write $\bfN(y) \subset \bbR^{Q}$ for the set of inward-pointing unit normal vectors. If $y\in \partial \DBar_\calH\setminus  \partial^2 \DBar_\calH$, $\bfN(y)$ contains a single element (the vector $\bfn_H$ with $H$ such that $y\in\partial H$). On the other hand, if $y \in \partial^2 \DBar_\calH$, $|\bfN(y)| = \infty$.
\medskip

The finite beads dynamics takes the form of a $\bbR^Q$-valued diffusion $Y$ 
which formally solves the following SDE
\begin{equation}\label{eq:38}
\begin{cases}
\dd Y_t(\fraki) = \dd W_t(\fraki) + \sum_{H \in \calH: \fraki \in Q_H} c(H, \fraki) \dd L_t^H, \quad \fraki\in Q\\
Y_0=y_0\in \DBar_\calH,
\end{cases}
\end{equation}
where $(W(\fraki) : \fraki \in Q)$ is a family of independent Brownian motions, the coefficients $c(H, \fraki)$ are defined as 
\begin{equation}\label{eq:74}
c(H, \fraki) =
\begin{cases}
1/\sqrt{2}, \text{ if } H = H_{(\frakj, \fraki)} \text{ for some } \frakj \in Q \\
-1/\sqrt{2}, \text{ if } H = H_{(\fraki, \frakj)} \text{ for some } \frakj \in Q \\
1, \text{ if } H = H^-_\fraki \\
-1, \text{ if } H = H^+_\fraki 
\end{cases}
\end{equation}
and
$L^H \ge 0$ is the local time at zero of the non-negative semi-martingale $Y\cdot\bfn_H-a_H$.

\begin{remark}
In the case $H = H_{(\frakj, \fraki)}$, the local time $L_t^H$ is $L_t(\frakj,\fraki)/\sqrt{2}$, where $L_t(\frakj,\fraki)$ was introduced below \eqref{eq:65}. This is the reason for $|c(H,\fraki)| = 1/\sqrt{2}$ in this case, rather than $1/2$.
\end{remark}

In the following proposition, we state the well-posedness of~\eqref{eq:38} as well as some of its properties. 
As the proof is classical, we postpone it to Appendix~\ref{sec:refl-sde-strong}. 

\begin{proposition}\label{lem:triple}
Let $(Q, M)$ be a setup. Then for all $y_0 \in \DBar_\calH$, there exists a unique strong solution to the SDE \eqref{eq:38}. 
More precisely, there is a continuous function $\Gamma \colon C([0, \infty), \bbR^{Q}) \rightarrow C([0, \infty), \DBar_\calH)$ 
such that, given a family of independent Brownian motions $W=\{W(\fraki)\}_{\fraki\in Q}$, $Y \eqdef \Gamma(y_0 + W)$ 
is the unique continuous semi-martingale for which \eqref{eq:38} holds. Moreover, in case 
$y_0 \in \DBar_\calH \setminus \partial^2 \DBar_\calH$, it holds
\begin{align}
&\bbP( \Leb(t\ge0 : Y_t \in \partial \DBar_\calH) = 0) = 1\,, \label{eq:46}\\ 
&\bbP(\exists t : Y_t \in \partial^2 \DBar_\calH) = 0\,.  \label{eq:47} 
\end{align}
\end{proposition}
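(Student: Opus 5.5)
The plan is to recognise \eqref{eq:38} as the Skorokhod problem for normal reflection in the convex polyhedral domain $\DBar_\calH$, and to use the classical deterministic theory for that problem. First note that for each $H\in\calH$ the vector $\sum_{\fraki\in Q_H}c(H,\fraki)e_\fraki$ is exactly the inward unit normal $\bfn_H$. Indeed, for $H=H_{(\frakj,\fraki)}$ we have $\bfn_H=(e_\fraki-e_\frakj)/\sqrt2$, and for $H^\pm_\fraki$ we have $\bfn_H=\mp e_\fraki$. Hence \eqref{eq:38} reads $Y_t=y_0+W_t+\sum_H\bfn_H L^H_t$, where each $L^H$ is nondecreasing and increases only on $\{Y\in\partial H\}$. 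This is the Skorokhod problem with normal reflection in a convex set with nonempty interior (we are given a setup). By Tanaka's theorem \cite{Tanaka79_StochasticDifferential} (or Lions--Sznitman), for every continuous $w$ with $w_0\in\DBar_\calH$ there is a unique pair $(\Gamma(w),\ell)$. Moreover, the map $\Gamma$ is continuous, even Lipschitz in the sup norm on compacts for convex domains. The total reflection $\ell$ has finite variation and satisfies $\dd\ell_t\in\bfN(\Gamma(w)_t)\,\dd|\ell|_t$.

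Setting $Y\eqdef\Gamma(y_0+W)$ then gives a continuous adapted semimartingale, so we obtain a strong solution. Uniqueness follows from the standard argument: if $Y,Y'$ are two solutions driven by the same $W$, then Itô applied to $|Y-Y'|^2$ together with $\langle Y_t-z,\bfn\rangle\ge0$ for $z\in\DBar_\calH$, $\bfn\in\bfN(Y_t)$ shows that $|Y-Y'|^2$ is nonincreasing. One point needs checking: the local times $L^H$ defined as local times of $Y\cdot\bfn_H-a_H$ must agree with the decomposition coefficients. In a polyhedron, $\ell$ decomposes as $\sum_H\bfn_H\ell^H$ with $\ell^H$ supported on $\partial H$. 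Tanaka's formula for the nonnegative semimartingale $Y\cdot\bfn_H-a_H$ then identifies $\ell^H$ with its local time at zero, provided the contributions of the other constraints, and the cross terms $\bfn_H\cdot\bfn_{H'}$, vanish on $\{Y\in\partial H\}$ in the $\dd\ell^{H'}$ sense. This is exactly where \eqref{eq:47} is used, because it guarantees that off multiple points only one constraint is active.

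For \eqref{eq:46}, each $\{Y_t\in\partial H\}$ has zero Lebesgue time. Away from other faces the process $Y\cdot\bfn_H$ is a one-dimensional reflected Brownian motion (its martingale part has quadratic variation $t$ since $|\bfn_H|=1$). The occupation-time formula for the continuous semimartingale $X=Y\cdot\bfn_H-a_H$ gives $\int_0^t\ic\{X_s=0\}\dd\langle X\rangle_s=0$. Since $\dd\langle X\rangle_s=\dd s$, this yields the claim globally, without needing to localise. Summing over the finitely many $H$ gives \eqref{eq:46}.

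The main obstacle is \eqref{eq:47}, namely that $Y$ never hits the codimension-two set $\partial^2\DBar_\calH$ when started off it. I would reduce this to pairs: $\partial^2\DBar_\calH\subset\bigcup_{H\ne H'}(\partial H\cap\partial H')$, and it suffices to show that $Y$ a.s.\ avoids each $\partial H\cap\partial H'$. Near such an intersection, and away from all other faces, the projection of $Y$ onto $\mathrm{span}(\bfn_H,\bfn_{H'})$ is a two-dimensional Brownian motion in a wedge with normal reflection. The orthogonal component is a Brownian motion, since normal reflections at these two faces do not affect it. In our setting $\bfn_H\cdot\bfn_{H'}\in\{0,\pm\frac12,\pm\frac1{\sqrt2}\}$, so the wedge angle $\theta$ is $\pi/2$, $\pi/3$, $2\pi/3$, $\pi/4$ or $3\pi/4$. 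By Varadhan--Williams, normally reflected Brownian motion in a wedge (reflection parameter $\alpha=0$) does not hit the corner from outside it. Localising with stopping times at which a third constraint becomes nearly active, and iterating over countably many such excursions, gives \eqref{eq:47}. A delicate point is the triple intersections, where the localisation fails. I would handle these by noting that they form a codimension $\ge3$ set containing a codimension-two face, so hitting them requires first hitting some pairwise intersection, which has already been ruled out. Finally \eqref{eq:47} is fed back into the local-time identification above.
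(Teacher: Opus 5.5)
Outside of \eqref{eq:47}, your plan is the paper's proof. You take $Y=\Gamma(y_0+W)$ from Tanaka's theorem, get uniqueness from uniqueness of the Skorokhod problem, get \eqref{eq:46} from the occupation-time formula with $\langle Y\cdot\bfn_H\rangle_t=t$, and identify $L^H$ with the $H$-component of the regulator using the absence of multiple points. Two harmless slips: for inward normals the monotonicity inequality is $\langle z-Y_t,\bfn\rangle\ge0$, and only continuity of $\Gamma$ is needed, not the Lipschitz property you mention. For \eqref{eq:47} the paper gives no argument of its own; it cites \cite[Th.~4]{Lepingle10_BoundaryBehavior}. Your self-contained replacement has a genuine gap.

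Your wedge localisation is stopped whenever a third constraint becomes nearly active. It therefore only shows that $Y$ avoids the points of $\partial H\cap\partial H'$ at which \emph{exactly} $H$ and $H'$ are active. Points with three or more active constraints are precisely what the localisation discards. Examples are a bead level with three of its neighbours, or $(0,0),(\pm1,\thalf),(0,1)$ at a common height. Your reduction, that hitting them ``requires first hitting some pairwise intersection, which has already been ruled out'', is circular. Such a point does lie in $\partial H\cap\partial H'$, but in the part you have not ruled out. A path can converge to it from the interior without ever visiting a point where exactly two constraints are active. For normally reflected Brownian motion in $\bbR^3_+$, excluding the open edges says nothing about the origin. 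So every stratum needs its own argument.

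A clean way to do this is as follows. Fix $S\subset\calH$ with $|S|\ge2$ whose stratum $\{\text{active set}=S\}$ is nonempty. Put $V_S=\mathrm{span}\{\bfn_H:H\in S\}$ and $r_S=\dim V_S$. Then $r_S\ge2$, because two constraints that can be active simultaneously have independent normals. Let $X$ be the component of $Y$ orthogonal to the affine space $\{y: y\cdot\bfn_H=a_H,\ H\in S\}$, so that $X\cdot\bfn_H=Y\cdot\bfn_H-a_H$ for $H\in S$. While $Y$ stays in a neighbourhood where no constraint outside $S$ can be active, the regulator increment is a nonnegative combination of active $\bfn_H$ with $H\in S$. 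Each such $\bfn_H$ is orthogonal to $X_t$. Hence
\[
\dd|X_t|^2=2X_t\cdot\dd W_t+r_S\,\dd t,
\]
so $|X|^2$ is a squared Bessel process of dimension $r_S\ge2$, which has no zeros at positive times, even when started at $0$. Run this from every rational time, and cover the stratum by countably many such neighbourhoods. This shows the stratum is a.s.\ never hit for $t>0$, and the finitely many $S$ exhaust $\partial^2\DBar_\calH$. The argument also covers the case $r_S=2$, so Varadhan--Williams is not needed. Alternatively, cite Lépingle as the paper does.
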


The map $\Gamma$ in the previous statement is nothing but the classical Skorokhod reflection map $\Gamma$ 
whose definition and basic properties are 
recalled in Appendix~\ref{sec:skorokhod-solution}. 

\begin{remark}\label{rem:proj}
In what follows, we will say that a $\bbR^{Q_\infty}$-valued process $Y = (Y_t)_{t \ge 0}$ solves \eqref{eq:38} 
if the finite particle projection $\Pi_QY$, for $\Pi_Q$ the canonical projection of $\bbR^{Q_{\infty}}$ onto $\bbR^Q$, 
solves \eqref{eq:38} and $\Pi_{Q_\infty \setminus Q}Y$ is static, 
i.e.\ $\Pi_{Q_\infty \setminus Q}Y_t = \Pi_{Q_\infty \setminus Q} Y_0$ for all $t \ge 0$.
\end{remark}

Before turning to probabilistic properties of our finite particle system (such as its invariant measure), 
we want to show it satisfies a suitable form of monotonicity with respect to the set $Q$ which 
will be important in \cref{sec:priori-estim-bead}. 
Let us begin with the following definition.

\begin{definition}\label{def:agg}
Let $Q' \subset Q \subset Q_\infty$ be two finite index sets, and $M$ and $M'$ two corresponding boundary parameters. We say that $(Q', M') \ge (Q, M)$ if the following two properties hold
\begin{enumerate}
\item for all $\frakj \in Q'$, 
\begin{equation}\label{eq:39}
\{ \fraki \in Q : \fraki \le \frakj \} \subset Q'\,,
\end{equation}
i.e. if $\frakj\in Q'$ then all particles in $Q$ below $\frakj$ are in $Q'$;
\item for all $\fraki \in Q'$, 
\begin{equation}
\label{eq:70}
M_\pm'(\fraki) \ge M_\pm(\fraki)
\end{equation}
\end{enumerate}
\end{definition}

The next theorem discusses the monotonicity property alluded to earlier. In its statement, 
$\Pi_{Q'}$ the canonical projection of $\bbR^{Q}\subset \bbR^{Q_{\infty}}$ onto $\bbR^{Q'}$ for 
$Q'\subset Q\subset Q_\infty$ (see Remark~\ref{rem:proj}) and consider the partial order on $\bbR^Q$ 
according to which $y' \ge y\in \bbR^Q$ if $y'(\fraki) \ge y(\fraki)$ for all $\fraki \in Q$. 

\begin{theorem}\label{thm:monotone}
Let $(Q, M)$ and $(Q', M')$ be two setups such that $(Q', M') \ge (Q, M)$ according to \cref{def:agg}. 
Let $\calH$ and $\calH'$ be the corresponding constraints, as defined in \eqref{eq:72}. 
Let $y_0 \in \DBar_\calH$ and $y_0' \in \DBar_{\calH'}$ be such that $y_0' \ge \Pi_{Q'} y_0$, 
and let $Y$ and $Y'$ be the Skorokhod solutions constructed in \cref{lem:triple} with initial conditions respectively 
given by $y_0$ and $y_0'$ and driven by the same Brownian paths. Then,  
$$ Y'_t \ge \Pi_{Q'} Y_t, \quad \textit{ for all } t \ge 0\,. $$
\end{theorem}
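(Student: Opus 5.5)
We prove \cref{thm:monotone} by reducing it to the discrete-time reflection scheme that defines the Skorokhod map $\Gamma$. By the continuity of $\Gamma$ in \cref{lem:triple}, we may approximate the Brownian paths by piecewise-constant (or piecewise-linear) paths. For such paths the Skorokhod solution on a convex polyhedron is obtained by iterating \emph{projections}: at each jump time we replace the current point $y$ by the Euclidean projection $\pi_{\DBar_\calH}(y+\Delta W)$ onto $\DBar_\calH$. If we show that one step preserves the order, namely
\[
z' \ge \Pi_{Q'} z \quad\Longrightarrow\quad \pi_{\DBar_{\calH'}}(z') \ge \Pi_{Q'}\,\pi_{\DBar_\calH}(z),
\]
then induction over the steps gives $Y'^{(n)}_t \ge \Pi_{Q'}Y^{(n)}_t$ for the approximations. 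The claim then follows by passing to the limit, using continuity of $\Gamma$ and the fact that the order is closed. Here $z,z'$ are arbitrary points, since the shared increment keeps the order.

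For the projection step, we exploit the lattice structure. The constraints are of three types: $y(\fraki)\le y(\frakj)$, and upper and lower bounds on single coordinates. Sets of this type are closed under coordinatewise $\max$ and $\min$, i.e. they are sublattices of $\bbR^Q$. Let $u=\pi_{\DBar_\calH}(z)$ and $u'=\pi_{\DBar_{\calH'}}(z')$. We aim to show $u'\ge \Pi_{Q'}u$ by a Topkis-type exchange argument applied to the strictly convex, separable objective $|y-z|^2$.

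Define $w\in\bbR^Q$ by $w=\max(u,u')$ on $Q'$ and $w=u$ on $Q\setminus Q'$, and define $w'=\min(u',\Pi_{Q'}u)$ on $Q'$. We must check three things.
\begin{itemize}
\item[(a)] $w'\in\DBar_{\calH'}$. Constraints $y(\fraki)\le y(\frakj)$ inside $Q'$ are preserved by $\min$. Upper bounds are preserved because $M'^+ \ge M^+$ and $u\le M^+$. Lower bounds hold because $u' \ge M'^-$ and $\Pi_{Q'}u\ge M^-$. However, $\min(u',u)\ge M'^-$ requires $u\ge M'^-$, which is not given. This is the delicate point. I would try to resolve it by noting that in the applications $M'^-=M^-$ on the relevant set, or by interpreting \eqref{eq:70} appropriately, and otherwise by arguing directly.
\item[(b)] $w\in\DBar_\calH$. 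The pair constraints fully inside $Q'$ are preserved by $\max$, and the upper bounds on $Q'$ use $u'\le M'^+$. For the mixed constraints, take $\fraki\in Q'$ and $\frakj\in Q\setminus Q'$. By \eqref{eq:39}, the only possible relation is $\fraki\le\frakj$ (if $\frakj\le\fraki$ then $\frakj\in Q'$). We therefore need $\max(u,u')(\fraki)\le u(\frakj)$. This fails in general, so instead we use that on $\DBar_{\calH'}$ nothing constrains $\fraki$ from above through $\frakj$. Hence we reorganise: it suffices to compare within the common problem. The cleaner route is to use the exchange inequality $|w-z|^2+|w'-z'|^2\le|u-z|^2+|u'-z'|^2$, which holds coordinatewise, because $z'\ge z$ gives supermodularity of $-(y-z)^2$ in $(y,z)$.
\item[(c)] Uniqueness of the projection then forces $w'=u'$, i.e. $u'\le \Pi_{Q'}u$.
\end{itemize}
This last conclusion has the wrong direction, so the roles must be swapped. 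One should take $w'=\max(u',\Pi_{Q'}u)\in\DBar_{\calH'}$ and $w=\min(u,u')$ on $Q'$, $u$ elsewhere, in $\DBar_\calH$. With this choice the order requirements line up with \eqref{eq:39}: lowering a coordinate $\fraki\in Q'$ never violates a mixed constraint $y(\fraki)\le y(\frakj)$ with $\frakj\notin Q'$, and \eqref{eq:70} handles the bounds. Uniqueness then yields $u'\ge\Pi_{Q'}u$.

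\textbf{Main obstacle.} The main difficulty is making the approximation rigorous: identifying the Skorokhod map on the polyhedron with the limit of iterated projections (a Euler scheme) for the SDE in \cref{lem:triple}, which works thanks to convexity. The exchange argument itself is routine once the lattice choices above are fixed.
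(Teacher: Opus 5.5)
Your final argument is correct, and it takes a genuinely different route from the paper. That final argument uses the swapped pairing: $w'=\max(u',\Pi_{Q'}u)$, together with $w=\min(u,u')$ on $Q'$ and $w=u$ on $Q\setminus Q'$.

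\textbf{Comparison with the paper.} The paper uses continuity and the memoryless property of $\Gamma$ to reduce to drivers that are piecewise linear, with pieces parallel to the coordinate axes. For such drivers it writes the Skorokhod solution explicitly: the pushed particle drags a growing coalescent group at speed $v/|Q_t|$, and the group freezes at a barrier. It then derives a contradiction at the first violation time $t_0$, by comparing the group velocities and showing $Q'_{t_0}\subset Q_{t_0}$.

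You instead discretise via the projection (catching-up) scheme, and show that a single Euclidean projection is order preserving by a Topkis-type exchange argument. Your route is shorter and purely variational. It needs no description of the reflected flow, and it isolates exactly where the hypotheses enter:
\begin{itemize}
\item \eqref{eq:39} ensures that every constraint linking $Q'$ to $Q\setminus Q'$ reads $y(\fraki)\le y(\frakj)$ with $\fraki\in Q'$, so lowering coordinates on $Q'$ keeps $w\in\DBar_\calH$;
\item the two halves of \eqref{eq:70} give $\max(u,u')\le M'^+$ and $\min(u,u')\ge M^-$.
\end{itemize}
The paper's route gives a concrete dynamical picture and avoids any discretisation, at the cost of a case analysis on coalescent groups.

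\textbf{Two points to fix in the write-up.}

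First, drop items (a)--(c) entirely. For that pairing, the exchange inequality you assert is false. With $a=u(\fraki)<b=u'(\fraki)$ and $s=z(\fraki)\le s'=z'(\fraki)$, one has $(a-s)^2+(b-s')^2-(b-s)^2-(a-s')^2=2(a-b)(s'-s)\le 0$. So the ``wrong direction'' there comes from a failed inequality, not a mislabelling. In the swapped pairing ($\min$ paired with $z$, $\max$ with $z'$), the same computation with $a>b$ gives $2(a-b)(s'-s)\ge 0$, which is exactly what you need. The worry about $M'^-$ also disappears there, since $w'\ge u'\ge M'^-$ automatically.

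Second, the convergence of the projection scheme to $\Gamma(y_0+W)$ does not follow from continuity of $\Gamma$ on $C([0,\infty),\bbR^Q)$, because step paths are not continuous. It is not an obstacle, however. It is the classical step-function approximation by which Tanaka constructs $\Gamma$ on convex domains, so you can simply cite it. With it, the induction step $z'-\Pi_{Q'}z=Y'^{(n)}_{t_{k-1}}-\Pi_{Q'}Y^{(n)}_{t_{k-1}}\ge 0$ and the closedness of the order complete the proof.
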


The proof is a slight modification of \cite[Theorem 3.2]{Sarantsev19_ComparisonTechniques}, 
and we give a sketch in Appendix~\ref{a:monotonicity}.

\subsection{Rewiring solution}\label{sec:rewiring-solution}

Apart from the monotonicity property established above, 
a crucial step in the comparison between Skorokhod solutions of~\eqref{eq:38} with different 
setups is the following equivalent (in law) construction of such solutions 
as a ``rewiring'' (random permutation) of independent paths. 

We will focus on setups $(Q, M)$ in which the boundary parameters $M$ are uniform over $\fraki \in Q$, 
namely, for fixed $M_- \in \bbR$, $M_-(\fraki) = M_-$ and $M_+(\fraki) = \infty$ for all $\fraki \in Q$. 
For an initial condition $y_0 \in \DBar_\calH$ and a finite set of independent Brownian paths $W = (W(\fraki) : \fraki \in Q)$, 
our goal is to show that the Skorokhod solution $Y \eqdef \Gamma(y_0 + W)$ equals (in law) the \textit{rewiring} solution, $Z$,  
that we will introduce in \cref{sec:definition-rewiring}. 
The strategy consists of two steps: first, we prove that $Y$ coincides {\it pathwise} with the 
\textit{pairwise Skorokhod solution} $\hat Y$ defined in \cref{sec:decomp-skor-into} (see~\cref{lem:3}); and 
then we prove that $\hat Y$ equals in law $Z$ (see~\cref{thm:yeqz}).

\subsubsection{Pairwise Skorokhod solution}\label{sec:decomp-skor-into}
For a path $y \in C([0, \infty), \bbR)$, $t_0 \ge 0$, and a set of constraints $\calH_0 \subset \calH$, 
let $\tau_1=\tau(y, t_0, \calH_0)$ be the first time after $t_0$ that a constraint in $\calH_0$ is met, i.e. 
\begin{equation}\label{eq:81}
\tau_1=\tau(y, t_0, \calH_0) \eqdef \inf \{ t \ge t_0 : y_t \in \partial H \text{ for some } H \in \calH_0 \}.
\end{equation}
Due to the (almost sure) no multiple point property \eqref{eq:47}, there is a
unique constraint $H \in \calH$ for which
$Y_{\tau_1} \in \partial H$. Call this constraint $H_1$.
For $i \in \bbN$, we now recursively define
$(\tau_{i+1}, H_{i+1})$ as 
\begin{equation}
\tau_{i+1} \eqdef \tau(Y, \tau_i, \calH \setminus \{ H_i \})\,,
\end{equation}
which is the first time after $\tau_i$ that a constraint other than $H_i$ is
met, and $H_{i+1}$ is the unique such
constraint.  The stopping times
$0 \le \tau_1 < \ldots \rightarrow \infty$ are strictly ordered and do
not accumulate, else, by the pigeon hole principle, one could find a contradicting multiple-point.

The pairwise Skorokhod solution $\YHat$ is defined in terms of the \textit{pairwise Skorokhod map}: 
for $H \in \calH$, let $\Gamma_H\colon C( [0, \infty), \bbR^{Q_H} ) \rightarrow C( [0, \infty), \Pi_{Q_H} H )$, 
for $Q_H$ as in~\eqref{e:QH}, be the solution to the Skorokhod problem in the domain $\Pi_{Q_H} H \subset \bbR^{Q_H}$.  
Explicitly, if $H = H_\fraki^-$ for some $\fraki \in Q$, then $\Gamma_H$ is a map on one-dimensional paths 
that gives the Skorokhod reflection against the barrier $M_-$ (see~\cite[Lemma 3.6.14]{KaratzasShreve1991}),
i.e., assuming $w(0) \ge M_-$, 
\begin{equation}\label{eq:92}
\Gamma_H( w )|_t \eqdef w_t + \sup_{0 \le s \le t} (M_- - w_s)^+ .
\end{equation}
If instead $H = H_\frakp$ for some $\frakp \in P$, the map $\Gamma_H$ is the symmetric Skorokhod reflection 
of a two-dimensional path: assuming $w^1_0 \le w^2_0$, 
$$ \Gamma_H( w^1, w^2 )|_t \eqdef \Big( w_t^1 - \thalf \sup_{0 \le s \le t} (w_s^1 - w_s^2)^+, w_t^2 + \thalf \sup_{0 \le s \le t} (w_s^1 - w_s^2)^+ \Big)\,, $$
which is obtained by applying \eqref{eq:92} to the difference $w^2 - w^1$ with $M_- = 0$. 
\medskip 

We now define $\YHat$ inductively on each of the non-degenerate intervals $(\tau_i, \tau_{i+1}]$. For $t \in [0, \tau_1]$, we set $\YHat_t = y_0 + W_t$. For $t \in (\tau_i, \tau_{i+1}]$ we set
\begin{align}
\Pi_{Q_{H_i}}\YHat_t &\eqdef \Gamma_{H_i}( \Pi_{Q_{H_i}}\YHat_{\tau_i} + \Pi_{Q_{H_i}}W_{\tau_i + \cdot} - \Pi_{Q_{H_i}}W_{\tau_i} ) |_{t - \tau_i}\,,\\
\YHat_t(\fraki) &\eqdef \YHat_{\tau_i}(\fraki) + W_t(\fraki) - W_{\tau_i}(\fraki)\,,\qquad \text{for } \fraki \in Q \setminus Q_{H_i}\,.
\end{align}
A priori, it is not even clear that $\YHat$ is $\DBar_\calH$-valued, but this is a consequence of the next lemma.

\begin{lemma}\label{lem:3}
It holds $\bbP$-a.s. that $Y = \YHat$.
\end{lemma}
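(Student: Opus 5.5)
We argue by induction over the stopping times $\tau_i$, showing that on each interval $[\tau_i,\tau_{i+1}]$ the process $Y=\Gamma(y_0+W)$ solves exactly the same equations as $\YHat$. Throughout we work on the full-measure event where \eqref{eq:46} and \eqref{eq:47} hold. For this we need $y_0\notin\partial^2\DBar_\calH$; the general case can be handled by restarting at a small positive time, or is simply excluded by the standing assumption. On this event the $\tau_i$ are well defined, strictly increasing and do not accumulate, so $\bigcup_i(\tau_i,\tau_{i+1}]\cup[0,\tau_1]=[0,\infty)$ and the induction covers all times.

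On $[0,\tau_1]$ the path $Y$ stays in $\mathrm{int}(\DBar_\calH)$ except possibly at $\tau_1$. The local times $L^H$ only increase on $\{Y\in\partial H\}$, so they vanish on $[0,\tau_1]$ and $Y_t=y_0+W_t=\YHat_t$ there.

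For the inductive step, assume $Y=\YHat$ on $[0,\tau_i]$. On $(\tau_i,\tau_{i+1})$ the process $Y$ does not touch $\partial H$ for any $H\neq H_i$, by the definition of $\tau_{i+1}$ (which is defined through $Y$). Hence only $L^{H_i}$ can increase on $[\tau_i,\tau_{i+1}]$; at the endpoint $\tau_{i+1}$ itself the local times are continuous, so nothing is added there. Reading off \eqref{eq:38}, coordinates $\fraki\notin Q_{H_i}$ satisfy $Y_t(\fraki)=Y_{\tau_i}(\fraki)+W_t(\fraki)-W_{\tau_i}(\fraki)$. The coordinates in $Q_{H_i}$ instead solve a Skorokhod problem in the domain $\Pi_{Q_{H_i}}H_i$, driven by the shifted Brownian increment and started at $\Pi_{Q_{H_i}}Y_{\tau_i}$. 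Indeed, their increment is the driving noise plus $L^{H_i}$ times the normal direction $(c(H_i,\fraki))_{\fraki\in Q_{H_i}}$, which is the inward normal $\bfn_{H_i}$ restricted to $Q_{H_i}$. In the pair case $\pm1/\sqrt2$ times $L^{H_i}$ equals $\pm\thalf L(\fraki,\frakj)$, consistent with the symmetric map; in the barrier case the coefficient is $\pm1$. Moreover $L^{H_i}$ is nondecreasing, increases only when $Y\in\partial H_i$, and keeps the path in $H_i$.

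The one-dimensional Skorokhod problem has a unique solution (Skorokhod's lemma, \cite[Lemma 3.6.14]{KaratzasShreve1991}). For the pair case, apply it to the difference $Y(\frakj)-Y(\fraki)$ with barrier $0$: since both coordinates receive equal and opposite pushes, the sum evolves freely and the difference is reflected. This yields exactly the formula for $\Gamma_{H_i}$. Therefore $\Pi_{Q_{H_i}}Y=\Pi_{Q_{H_i}}\YHat$ on $(\tau_i,\tau_{i+1}]$, and combined with the free coordinates we get $Y=\YHat$ there. This closes the induction and in particular shows that $\YHat$ is $\DBar_\calH$-valued.

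The main subtlety is the non-circularity: $\tau_i,H_i$ are defined from $Y$, not $\YHat$, so the argument only needs that $Y$ satisfies the local-time characterization, which is part of \cref{lem:triple}. The other delicate point is checking that a touching of $\partial H_i$ at time $\tau_i$ correctly initializes the local time at zero for the restarted map. The restart is consistent: $L^{H_i}_{\cdot}-L^{H_i}_{\tau_i}$ is the Skorokhod regulator for the restarted path, because $\Pi_{Q_{H_i}}Y_{\tau_i}\in\Pi_{Q_{H_i}}H_i$ and Skorokhod's formula with $w(0)$ on the boundary gives the correct increment. I expect this bookkeeping, rather than any estimate, to be the main obstacle.
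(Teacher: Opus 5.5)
Your argument is correct, but it runs in the opposite direction from the paper's.

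The paper starts from the explicit candidate $\YHat$ on $[\tau_i,\tau_{i+1}]$. It introduces the auxiliary time $\hat\tau_{i+1}$ at which $\YHat$ might leave $\DBar_\calH$. Up to $\tau_{i+1}\wedge\hat\tau_{i+1}$, it checks that $\YHat$, with its pairwise regulator (supported on $\partial H_i$ and parallel to $\bfn_{H_i}$), solves the full normal Skorokhod problem in $\DBar_\calH$. Tanaka's uniqueness (\cref{thm:tanaka}) for the restarted problem then gives $\YHat=Y$ on that interval. Finally, $\hat\tau_{i+1}<\tau_{i+1}$ is ruled out by the minimality of $\tau_{i+1}$.

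You instead start from $Y$. Using the local-time form of \eqref{eq:38} from \cref{lem:triple}, you note that only $L^{H_i}$ can grow on $[\tau_i,\tau_{i+1}]$. Hence the free coordinates are pure Brownian increments. The colliding coordinates solve a Skorokhod problem that becomes one-dimensional after passing to the difference coordinate, where uniqueness is elementary.

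Your route dispenses with $\hat\tau_{i+1}$ and the contradiction step, and it yields the $\DBar_\calH$-valuedness of $\YHat$ as a by-product. The price is that you rely on the full decomposition of the regulator of $Y$ into the local times $L^H$, each carried by $\partial H$. That decomposition is where the no-multiple-point property \eqref{eq:47} is used in the proof of \cref{lem:triple}. The paper, by contrast, needs only the abstract Skorokhod characterization of $Y$ and uniqueness in $\DBar_\calH$.

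One small remark: your aside about handling $y_0\in\partial^2\DBar_\calH$ by restarting at a positive time is unnecessary. For such $y_0$ the constraint $H_1$, and hence $\YHat$ itself, is not even well defined, so the lemma is implicitly stated for $y_0\notin\partial^2\DBar_\calH$.
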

\begin{proof}
We will proceed by induction on the time interval $[0, \tau_i]$. The case $i = 1$ is trivial, because when the original process $y_0 + W_t$ stays within the domain $\DBar_\calH$, then it exactly equals the Skorokhod solution. 
Assume now that $\YHat = Y$ on $[0,\tau_i]$ and we need to show that $\YHat = Y$ on $[\tau_i, \tau_{i+1}]$. 
It is not apriori known that $\YHat\restr_{[\tau_i, \tau_{i+1}]}$ stays inside the domain $\DBar_\calH$: 
there is no mechanism keeping $\YHat$ to any of the constraints $H \in \calH$ other than $H = H_i$. Define
$$ \tauHat_{i + 1} \eqdef \inf \{ t \in [\tau_i, \tau_{i+1}] : \YHat \notin \DBar_\calH \} $$
with $\tauHat_{i + 1} =\infty$ if no such time exists. We claim that 
\begin{equation}\label{eq:66}
\YHat_t = Y_t, \qquad \forall t \in [\tau_i, \tau_{i+1} \wedge \tauHat_{i+1}]\,.
\end{equation}
If the claim holds, then we are done. Indeed, if $\tauHat_{i + 1} \ne \infty$ 
then $\YHat_{\tauHat_{i+1}} \in \partial H$ for some $H \ne H_i$, 
but since $\YHat_{\tauHat_{i+1}} = Y_{{\tauHat_{i+1}}}$, this would contradict the minimality of $\tau_{i+1}$. 

It remains to prove \eqref{eq:66}, for which it suffices to verify that 
$\YHat\restr_{[\tau_i, \tau_{i+1} \wedge \tauHat_{i+1}]}$ is a solution to the Skorokhod problem for the domain $\DBar_\calH$, 
according to \cref{def:2}, as the latter is unique by~\cref{thm:tanaka}. 
Clearly, $\YHat$ is continuous and, by definition of $\tauHat_{i+1}$, stays inside the domain $\DBar_\calH$ 
on the interval $[\tau_i, \tau_{i+1} \wedge \tauHat_{i+1}]$. 
Define the $\bbR^{Q}$-valued process $(\Lambda_t^i)_{t \in [\tau_i, \tau_{i+1} \wedge \tauHat_{i+1}]}$ according to
$$ \YHat_t = \YHat_{\tau_i} + W_t - W_{\tau_i} + \Lambda_t^i $$
In particular, $\Lambda^i(\fraki) = 0$ for all $\fraki \in Q \setminus Q_H$, and otherwise it is the regulator of the pairwise Skorokhod problem, i.e. 
$$ \Pi_{Q_{H_i}} \Lambda^i_t = \Gamma_{H_i}( \Pi_{Q_{H_i}}\YHat_{\tau_i} + \Pi_{Q_{H_i}}W_{\tau_i + \cdot} - \Pi_{Q_{H_i}}W_{\tau_i} ) |_{t - \tau_i} - (\Pi_{Q_{H_i}}\YHat_{\tau_i} + \Pi_{Q_{H_i}}W_{\tau_i + \cdot} - \Pi_{Q_{H_i}}W_{\tau_i})\,.$$
In particular, it starts at zero and is of finite variation. To conclude, we only need to verify that \eqref{eq:77} and \eqref{eq:78} hold. 
For the former, 
if $\Pi_{Q_{H_i}}\Lambda^i_t$ only increases when $\Pi_{Q_{H_i}}\YHat$ is at the wall $\partial H_i$. Since the particle stays within $\DBar_\calH$ on this interval, $\Lambda^i$ only increases on $\partial \DBar_\calH$. 
Moreover, the candidate $\Lambda^i$ is parallel to $\bfn_{H_i}$, therefore \eqref{eq:78} also holds. \end{proof}

\subsubsection{Definition of rewiring}\label{sec:definition-rewiring}

The rewiring solution is obtained by dividing the driving noise path into segments, which are then 
rewired so to ensure that the constraints of $\calH$ are satisfied. 
The driving noise we will use is not the system of independent Brownian Motions, $(W(\fraki):\fraki\in Q)$, 
used in the construction of $Y$ (or $\YHat$), 
but rather a system of independent Brownian motions $(\WBar(\fraki) : \fraki \in Q)$ 
that have already been reflected against the single barrier at $M_- \in \bbR$ 
and start from $\WBar_0 = y_0$. Since at this point we search for an identity in law, it does not matter how $\WBar$ is constructed\footnote{For example one could construct $\WBar$ from $W$ via one-dimensional Skorokhod reflection, $\WBar(\fraki) \eqdef \Gamma_{M-}(y_0+W(\fraki))$ in which $\Gamma_{M-}$ is the map of \eqref{eq:92}, or via $\WBar(\fraki) \eqdef M_- + |y_0 + W(\fraki) - M_-|$.} and we assume that $\WBar$ is given to us a priori.

We now construct the rewiring solution $(Z_t)_{t \ge 0}$ inductively on intervals $[\tau_i, \tau_{i+1}]$. 
The construction is similar in spirit to the inductive construction of $\YHat$ in \cref{sec:decomp-skor-into}, 
but now instead of a pairwise Skorokhod reflection, we perform a pairwise swapping of paths.
In particular, if one removes the labels, and simply looks at the trace, then $Z$ will coincide with $\WBar$. 

The {\it rewired process} process $Z$ is defined according to $Z = \sigma \WBar$, i.e. 
\begin{equation}\label{e:Rewired}
Z_t(\fraki) = \WBar(\sigma_t \fraki)\,,\qquad t\geq 0
\end{equation}
where $\sigma=(\sigma_t)_{t \ge 0}$ is a permutation-valued process that we now inductively construct. 
In the first iteration, we set $\sigma^1 \equiv 1$ and $Z^1 = \sigma^1 \WBar$. 
Take $\tau_1 = \tau(\sigma^1 \WBar, 0, \calH)$, with $\tau$ as in \eqref{eq:81}, and define
$$ \sigma_t \eqdef \sigma^1_t, \qquad t \in [0, \tau_1] $$
As for $\YHat$, let $H_1$ be the unique constraint that is met at $\tau_1$. We remark that the constraint is unique because trivially the process $\WBar$ has no multiple points. 
Given $(\sigma^i, Z^i, \tau_i, H_i)$, the proceeding $(\sigma^{i+1}, Z^{i+1}, \tau_{i+1}, H_{i+1})$ is obtained as follows. 

We begin by defining $\sigma^{i+1}$. If $H_i = H^-_\fraki$ for some $\fraki \in Q$, then simply set $\sigma^{i+1} \eqdef \sigma^i$. 
Otherwise $H = H_\frakp$ for some $\frakp= (\fraki, \frakj)\in P$ with $\fraki \le \frakj$. 
Writing $(\fraki\, \frakj)$ for the transposition that swaps $\fraki$ and $\frakj$, 
we define $\theta^{i+1}_t \in \{1, (\fraki\, \frakj)\}$ so that 
$$ \theta^{i+1}_t Z^i_t(\fraki) \le \theta^{i+1}_t Z^i_t(\frakj) $$
in which we arbitrarily choose $\theta^{i+1}_t = 1$ when $Z^i_t(\fraki) = Z^i_t(\frakj)$. We set $\sigma^{i+1}_t = \theta^{i+1}_t \sigma^i_t$. 

Let $Z^{i+1} = \sigma^{i+1} \WBar$, $\tau_{i+1} = \tau(Z^{i+1}, \tau_i, \calH \setminus \{H_i\})$ and set
$$ \sigma_t = \sigma^{i+1}_t, \qquad t \in (\tau_i, \tau_{i+1}]. $$
Finally, let $H_{i+1}$ be the unique constraint that is met at $\tau_{i+1}$.

With the construction of $Z$ at hand, we now show that its law coincides with that of $Y$. 

\begin{theorem}\label{thm:yeqz}
It holds $Y \eqd Z$.
\end{theorem}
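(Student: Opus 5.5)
Since $Y = \YHat$ almost surely by \cref{lem:3}, it suffices to prove $\YHat \eqd Z$. Both processes are built by the same recursive scheme on the random intervals $(\tau_i,\tau_{i+1}]$: on each interval only the pair (or singleton) $Q_{H_i}$ interacts, and the remaining coordinates move as free independent (barrier-reflected) Brownian motions. The plan is an induction on $i$. We compare the laws of $(\YHat_{t\wedge\tau_i})_{t\ge0}$ and $(Z_{t\wedge\tau_i})_{t\ge0}$, together with the data $(\tau_i,H_i)$, and glue the intervals using the strong Markov property at the stopping times $\tau_i$.

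The key input is a two-particle identity in law. For two independent Brownian motions $B^1,B^2$ started at $x_1\le x_2$, the symmetric Skorokhod reflection $\Gamma_{H}(x+B)$ has the same law as the order statistics $(\min,\max)(x_1+B^1,x_2+B^2)$. To see this, rotate coordinates. The sum $B^1+B^2$ is unaffected by both operations, and the difference is reflected in the first case and replaced by its absolute value in the second. Lévy's identity (the reflected path $w+\sup(-w)^+$ and $|w|$ are equal in law as processes for a Brownian motion started at $x_2-x_1\ge0$) then gives the claim, since the difference is independent of the sum. Swapping labels when the two paths cross is exactly what the transposition $\theta$ implements. This is also why $Z$ uses the pre-reflected $\WBar$ rather than $W$. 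For a single-barrier constraint $H^-_\fraki$, $\Gamma_H(y_0+W(\fraki))$ is itself a reflected Brownian motion, equal in law to $\WBar(\fraki)$ restarted at the current position; because the boundary parameters are uniform ($M_-(\fraki)=M_-$, $M_+=\infty$), relabelled copies of $\WBar$ are still reflected at the same barrier.

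For the inductive step, condition on $\calF_{\tau_i}$. By the strong Markov property and induction, the configurations $\YHat_{\tau_i}$ and $Z_{\tau_i}$, as well as $H_i$, have the same joint law. After $\tau_i$, the future increments of $\YHat$ are obtained by applying $\Gamma_{H_i}$ to the pair $Q_{H_i}$ and free motion to the other coordinates. For $Z$, the future is obtained by taking the independent reflected paths $\WBar(\sigma_{\tau_i}\cdot)$ restarted at $\tau_i$ and applying the swap $\theta$ on the pair $Q_{H_i}$; the label permutation $\sigma_{\tau_i}$ is $\calF_{\tau_i}$-measurable and so does not affect the conditional law. By the two-particle identity, the processes $\YHat_{\tau_i+\cdot}$ and $Z_{\tau_i+\cdot}$ agree in conditional law as processes, up until any constraint other than $H_i$ is met. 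The time $\tau_{i+1}$ and the constraint $H_{i+1}$ are the same measurable functional of the path in both cases, namely the first hitting of $\bigcup_{H\ne H_i}\partial H$. Hence the laws of the stopped processes up to $\tau_{i+1}$ agree, together with $(\tau_{i+1},H_{i+1})$. Since $\tau_i\to\infty$ almost surely for both processes, the finite-dimensional distributions coincide, and this gives the theorem.

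I expect the main obstacle to be making the strong Markov and measurability bookkeeping rigorous. The issue is that $Z$ is not adapted to $\WBar$ in a Markovian way per coordinate: it is adapted to the filtration of the whole family $\WBar$, while the relevant conditional law at $\tau_i$ is that of the pair $\WBar(\sigma_{\tau_i}\fraki),\WBar(\sigma_{\tau_i}\frakj)$. Two further points need care: checking that $\tau_i$ for $Z$ does not accumulate, which uses the no-multiple-point property of independent reflected motions, and checking that the identity in the two-particle lemma holds jointly with the independent free coordinates. I would handle this by working with the joint Markov process $(\WBar_t)$ on $[M_-,\infty)^Q$, noting that its law is invariant under $\calF_{\tau_i}$-measurable coordinate permutations, and applying the two-particle lemma to the restarted pair conditioned on $\calF_{\tau_i}$.
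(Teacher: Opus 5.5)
Your proposal is correct and follows essentially the same route as the paper. Both reduce to $\YHat \eqd Z$ via \cref{lem:3}, then use the strong Markov property to show that the path segments between consecutive collision times $\tau_i$ have the same conditional kernel (depending only on the configuration at $\tau_i$) for both processes, with the key input that symmetric pairwise Skorokhod reflection has the same law as the min/max of two independent paths. You additionally justify two points the paper only asserts: the two-particle identity (via the sum/difference rotation and L\'evy's identity), and the irrelevance of the $\calF_{\tau_i}$-measurable relabelling $\sigma_{\tau_i}$.
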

\begin{proof}
Thanks to \cref{lem:3}, it is sufficient to verify $\YHat \eqd Z$. To do so, we will prove an equality in law for path segments. 
Given a continuous process $(U_t)_{t \ge 0}$ in $C([0, \infty), \bbR^Q)$ along with a family of stopping times 
$ 0 = \tau^U_0 < \tau^U_1 < \ldots \rightarrow \infty $,
define the segment $V^U_{i+1}$ for $i \in \bbN$ to be the $E \eqdef [0, \infty) \times C([0, \infty), \bbR^Q)$-valued random variable 
$$ V^U_{i+1} = (\tau^U_{i+1} - \tau^U_{i}, (U_{(\tau^U_{i}+t) \wedge \tau^U_{i+1}} )_{t \ge 0})\,. $$
We claim that there exists a probability kernel $K \colon \bbR^Q \times \calB(E) \rightarrow [0, 1]$ 
such that for both $U \in \{ \YHat, Z\}$, it holds that for all $A \in \calB(E)$, $i \in \bbN$
$$ \bbP( V^U_{i+1} \in A | \calF^U_{\tau^U_{i}} ) = K( U_{\tau^U_{i}}, A) \qquad \text{a.s.,}$$
where $(\calF^U_t)_{t \ge 0}$ is the natural filtration associated to $U$. 
That separately for $U=\YHat$ and $U=Z$ a kernel $K$ exists is a consequence of the strong Markov property, 
the claim here is that these kernels coincide. From this claim, along with $\YHat_0 \eqd Z_0$, 
the theorem follows from the tower property. 

For $U = \YHat$ we choose $\tau^{\YHat}_i$ to be the $\tau_i$ of \cref{sec:decomp-skor-into}. 
Notice that even though the $\tau_i$'s were initially defined 
in terms of the collisions of $Y$, since $Y = \YHat$ by \cref{lem:3}, $\tau_i$ may be equivalently defined in terms of the collisions of $\YHat$.
Due to the lack of multiple points, $V^{\YHat}_{i+1}$ is a configuration in which there is one unique collision, $H_i$. 
The segment $V^{\YHat}_{i+1}$ evolves in two independent parts: 
the colliding part, $\Pi_{Q_{H_i}}\YHat_{\tau_i}$, and the free part, $\Pi_{Q\setminus Q_{H_i}}\YHat_{\tau_i}$. 
The former is described as follows: if $H = H^-_\fraki$ for some $\fraki \in Q$, 
then $\Pi_{Q_{H_i}}\YHat_{\tau_i}(\frakp_i)$ is distributed as a Brownian Motion 
Skorokhod reflected against a barrier at $M_- \in \bbR$; 
if $H = H_\frakp$ for some $\frakp \in P$ then $\Pi_{Q_{H_i}}\YHat_{\tau_i}$ 
is given by a pair of independent Brownian Motions Skorokhod reflected against each other. 
The latter, on the other hand, evolves simply as a family of independent Brownian Motions, 
independent also from the colliding part. 
The segment ends at $\tau_{i+1}$ when the colliding part meets the free part. 

In the case of $U = Z$, we choose $\tau^Z_i$ to be the $\tau_i$ of \cref{sec:definition-rewiring}. 
The evolution of the segment $V^{Z}_{i+1}$ follows the exact same description as above, 
but with the following difference: we must replace the phrase ``a pair of independent Brownian Motions 
Skorokhod reflected against each other'' with ``the maximum and minimum of two independent Brownian paths''. 
Since these two operations result in the same distribution, the path segments are equal in law. 
In particular, the stopping times $\tau^Z$ and $\tau^{\YHat}$ are equal in law as they describe the hitting time two independent processes, the colliding part and the free part, and as argued, these processes are identical in law. 
\end{proof}

\subsection{Truncated dynamics and their stationary measure}\label{sec:truncated-dynamics}
Let $F \subset F_\infty$ be a bounded region. To avoid pathologies, we assume that $F$ is a finite union of non-degenerate closed intervals, and we refer to such $F \subset F_\infty$ as being \textit{non-degenerate}. The truncated dynamics in $F$ are defined in such a way that the active particles are those
inside $F$, while all other particles are frozen. The evolution of the active particles shall be given by the finite particle dynamics of the previous section for a specific choice of setup $(Q, M)$. The setup $(Q_F, M_F)$ of the truncated dynamics is given by
$$ Q_F = Q_F(y_0)\eqdef \{\fraki \in Q_\infty : (\frakl(\fraki), y_0(\fraki)) \in F \}\,, $$
and, for $\fraki \in Q_F$,
\begin{equation}\label{eq:29}
\begin{split}
M^+_F(\fraki) &\eqdef \min( y_0(\frakj) : \frakj \ge \fraki, \frakj \notin Q_F) \wedge \inf\{y : y \ge y_0(\fraki), \, (\frakl(\fraki), y) \notin F \}\,,\\
M^-_F(\fraki) &\eqdef \max(y_0(\frakj) : \frakj \le \fraki, \frakj \notin Q_F) \vee \sup\{y : y \le y_0(\fraki), \, (\frakl(\fraki), y) \notin F \}\,.
\end{split}
\end{equation}
Notice that $Q_F$ is almost surely finite and the convex set $\DBar_\calH$ is of finite volume.

\begin{definition}\label{def:PointPro}
For $t\ge0$, bounded and non-degenerate $F \subset F_\infty$, $y_0\in D_\infty$, we define the $\OmegaBar$-valued process $\eta^{(F)}$ on $F_\infty$ according to
$$\eta^{(F)}_t=\sum_{\fraki\in Q_\infty}\delta_{(\frakl(\fraki), Y^{(F)}_t(\fraki))}$$
where $\{Y^{(F)}_t(\fraki)\}_{\fraki\in Q_F}$ is the strong solution of \eqref{eq:38}, while $Y^{(F)}_t(\fraki)=y_0(\fraki)$ if $\fraki \not\in Q_F$. 
\end{definition}

Before proceeding, we show that the bead measures are  reversible measures for the truncated system. Recall in particular \ref{rem:proj}.
\begin{proposition}
The bead measures $\bfP_\gamma$ are invariant for the truncated dynamics in $F$.
\end{proposition}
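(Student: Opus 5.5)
The plan is to reduce invariance of $\bfP_\gamma$ to two facts: the DLR property \ref{itm:bead-gibbs} of the bead measure, and the invariance of normalised Lebesgue measure for normally reflected Brownian motion in a bounded convex polytope. Fix $F$ bounded and non-degenerate and a bounded measurable test function $f$ on $\Omega$. We need $\bbE_\gamma[f(\eta^{(F)}_t)]=\bfE_\gamma[f]$.

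The first step is to condition on the configuration outside the active region. The setup $(Q_F,M_F)$ is determined by $y_0$ only through the frozen beads $\{(\frakl(\frakj),y_0(\frakj)):\frakj\notin Q_F\}$, which is exactly $\omega\restr_{F^\complement}$, together with $F$ itself. The labelling convention of \cref{rem:interchangeably} is a nuisance here, since $Q_F$ is a set of labels. I would work with unlabelled configurations: the number of active beads on each line and their order are determined by $\omega\restr_{F^\complement}$ through the interlacement constraints, up to a global relabelling which does not affect $\eta^{(F)}$. By \ref{itm:bead-gibbs}, applied with $\Lambda=F$ (one may take $F$ slightly enlarged or approximate, since $F$ is bounded), the conditional law of $\omega\restr_F$ given $\omega\restr_{F^\complement}$ is uniform on the set of admissible completions. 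Given the boundary data, this set is precisely the interior of the polytope $\DBar_{\calH}\subset\bbR^{Q_F}$, up to the null boundary. Hence, conditionally, $\Pi_{Q_F}y_0$ is distributed as normalised Lebesgue measure $\lambda_{\calH}$ on $\DBar_{\calH}$, which has finite volume as noted before \cref{def:PointPro}.

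The second step is the core one: show that $\lambda_{\calH}$ is invariant for the Skorokhod solution of \cref{lem:triple}, which is reflected Brownian motion in $\DBar_{\calH}$ with normal reflection. I would first try a generator argument. For $g\in C^2$ satisfying the Neumann condition $\nabla g\cdot \bfn_H=0$ on each face $\partial H$, Itô's formula gives
\[
g(Y_t)-g(Y_0)-\int_0^t\thalf\Delta g(Y_s)\,\dd s = \text{martingale} + \sum_H\int_0^t \nabla g(Y_s)\cdot\bfn_H\,\dd L^H_s .
\]
The last term vanishes by the Neumann condition. Moreover, $\int_{\DBar_\calH}\Delta g\,\dd y=\int_{\partial}\partial_{\bfn}g=0$ by the divergence theorem, since the faces are flat and the lower-dimensional corners are null.

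Turning this into invariance of $\lambda_\calH$ requires a core of such $g$, and this is where corners make things delicate. The alternative I would actually pursue is more robust. Symmetric reflected Brownian motion in a bounded Lipschitz (here convex) domain is the diffusion associated with the Dirichlet form $\thalf\int|\nabla g|^2\,\dd y$ on $H^1(\DBar_\calH)$, which is symmetric with respect to Lebesgue measure. One then identifies this Fukushima process with the Skorokhod solution in convex polytopes, e.g. via Tanaka's / Williams's results and pathwise uniqueness. Reversibility then follows, and in particular invariance. A third option, fitting the paper's framework, is to approximate by smooth convex domains, or to use penalisation and continuity of $\Gamma$. The main obstacle is exactly this identification at the non-smooth boundary. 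Since \eqref{eq:47} shows multiple points are a.s. never hit from $\lambda_\calH$-a.e. start, only smooth faces matter, which supports the generator argument with a cutoff near $\partial^2\DBar_\calH$.

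The final step is to integrate. Conditionally on $\omega\restr_{F^\complement}$, the dynamics fixes $\omega\restr_{F^\complement}$ and evolves $\omega\restr_F$ as reflected Brownian motion started from $\lambda_{\calH}$. By the second step, $\omega\restr_F$ at time $t$ is again distributed as $\lambda_\calH$, so the joint law at time $t$ is $\bfP_\gamma$. Measurability of $y_0\mapsto\law(Y^{(F)}_t)$ follows from continuity of $\Gamma$ in \cref{lem:triple} and from the measurable dependence of $M_F$ on $y_0$. Reversibility follows by the same argument applied to pairs $f(\eta_0)g(\eta_t)$.
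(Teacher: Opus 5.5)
Your proposal is correct and follows the same route as the paper. The paper splits $\bfP_\gamma$ into its marginal on $F^\complement$ and its conditional on $F$, uses the Gibbs property to identify the conditional with normalised Lebesgue measure on the interlacement polytope, and invokes reversibility of that measure for normally reflected Brownian motion, which it treats as classical via \cref{prp:5} with $c=0$ (Burdzy--Chen--Pal); your Dirichlet-form identification is a more detailed justification of that same step. The one imprecision, which the paper shares, is harmless: when $F$ meets some line $\{\frakl\}\times\bbR$ in several intervals, the admissible completions can form a union of such polytopes rather than the single polytope $\DBar_\calH$, but the dynamics preserves each piece, so conditioning on the piece fixes it.
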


\begin{proof}
We decompose the bead measure into the marginal outside of $F$, and the conditional inside
$$ \bfP_\gamma = \bfP_\gamma\restr_{F^c} \otimes \bfP_\gamma\restr_{F | F^c}. $$
Conditional upon $F^c$, the remaining bead positions are sampled uniformly at random conditional to keeping to interlacement. This is the Markov Property of the bead measure, item~\eqref{itm:bead-gibbs}. The truncated dynamics are that of a reflected brownian motion in the space of points satisfying interlacement, and therefore the uniform measure is reversible for these dynamics. This last statement is classical, and it is stated in more generality (to include the case of an external magnetic field) in \cref{prp:5}.
\end{proof}

We now describe two choices of $F$. The first is the finite box $F_L \subset F_\infty$, $L\in\bbN$, given by
\begin{equation}\label{eq:defFL}
F_L \eqdef \{-L,\dots, L\} \times [-L, L].
\end{equation}
For $F = F_L$, we write
\begin{gather}\label{eq:defQL}
Q_L \eqdef Q_{F_L}, \qquad M^\pm_L(\fraki) \eqdef M^\pm_{F_L}(\fraki)
\end{gather}
Notice that $\DBar_\calH \subset [-L, L]^{Q_L}$. 
The second choice gives the {\it annulus dynamics}, which will be used to prove the decay of certain path functionals, 
namely \cref{lem:4}. Compared to the dynamics in $F_L$, in the annulus dynamics also beads inside the annulus 
$F_{L/4} \setminus F_{L/5}$ are frozen. In other words, upon defining
\begin{equation}\label{e:InOutBox}
\FTil^0_L \eqdef F_{L/5}\,,\qquad \FTil^1_L \eqdef F_L \setminus F_{L/4}\,\qquad\text{and}\qquad \FTil_L \eqdef \FTil^0_L \cup \FTil^1_L\,,
\end{equation}
the active region is given by $F = \FTil_L$. In either case, we write $Y^L \eqdef Y^{(F_L)}$,  $\YTil^L \eqdef Y^{(\FTil_L)}$ 
and, as in \cref{def:PointPro}, 
\begin{equation}\label{equ:TrAn}
\eta^L \eqdef \eta^{(F_L)}\,,\qquad \tilde\eta^L \eqdef \eta^{(\FTil_L)}
\end{equation}
for the resulting dynamics. 

The advantage of the annulus dynamics is that conditional on the initial configuration $y_0$,
the evolution within $\FTil^0_L$ is independent of the evolution within $\FTil^1_L$, 
as the two are separated by the static annulus $F_{L/4} \setminus F_{L/5}$.
Moreover, since information cannot travel too far, 
we expect $\tilde\eta^L$ to be a good approximation of $\eta^L$ if one only observes 
the processes either near the origin, or near the boundary $\partial F_L$.

\section{From finite to infinite volume}\label{sec:fromLtoinfty}

The goal of this section is to determine the infinite volume limit of the dynamics $\eta^L$ in \eqref{equ:TrAn} 
introduced in the previous section 
and define the {\it dynamical bead model}, thus proving Theorem \ref{th:main1}. To do so, we will 
compare the truncated dynamics with different truncation parameters $L,L'$ and show that they coincide with high probability in any finite window provided $L,L'$ are sufficiently large. 
This is the content of the next statement. 

\begin{theorem}\label{thm:prob.decay}
For all $T \ge 0$, $\gamma \in (-1, 1)$, $\theta \in (\tfrac23, \tfrac34)$, there exist constants $c, C, L_0 > 0$
such that for all $x_0 \in F_\infty$ and $L'\ge L \ge L_0$ satisfying $x_0 + F_L \subset F_{L'}$, it holds
\begin{equation}\label{eq:101}
\bbP_\gamma\big( ( \eta^{(x_0 + F_L)}_t\restr_{x_0 + F_{L/2}} )_{t \in [0, T]} \ne ( \eta^{L'}_t\restr_{x_0 + F_{L/2}} )_{t \in [0, T]} \big) \lesssim_T e^{ -(\log L)^2/c} + (L')^4 e^{- c L^2} + L^{\theta} (L')^4 e^{-C L^{3\theta - 2}}.
\end{equation}
where $F_L, F_{L'}$ are defined according to \eqref{eq:defFL} and $\eta^{(x_0 + F_L)}$ and 
$\eta^{L'}=\eta^{(F_{L'})}$ are as in \cref{def:PointPro}. 
\end{theorem}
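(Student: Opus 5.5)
By translation invariance of $\bfP_\gamma$ and of the Brownian family, I will reduce to $x_0=0$. The claim then compares $\eta^{(F_L)}$ and $\eta^{L'}$, both driven by the same noise $W$, on the window $F_{L/2}$. The strategy is a disagreement-propagation argument: a difference in the window can only originate from the region between $\partial F_L$ and $\partial F_{L'}$. It must then be transmitted inward through a chain of collisions between neighbouring beads within time $T$. I will show such a chain of length of order $L$ is unlikely.

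\emph{Step 1: good initial configurations.} Using the determinantal structure \eqref{eq:soshni2} with kernel \eqref{eq:J}, I will first bound the probability that the initial configuration has anomalous features. These are a vertical gap larger than $L^{\theta'}$ (for some small $\theta'$), an unusually high local particle density, or a window $\{\frakl\}\times[a,a+\varepsilon]$ crowded with many beads. For the sine process on each line, large gaps have Gaussian-type tails $e^{-cL^2}$ at scale $L$. A union bound over the $O((L')^4)$ labels and pairs in $F_{L'}$ then gives the term $(L')^4e^{-cL^2}$.

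\emph{Step 2: bounding collision chains.} I will then control chains of collisions, in the spirit of the collision control referred to in the organization as \cref{thm:collisions}. Monotonicity, \cref{thm:monotone}, sandwiches the discrepancy between extremal boundary data. The rewiring representation \cref{thm:yeqz} lets one view the reflected system as a permutation of independent reflected Brownian paths $\WBar$. In this picture, a disagreement at time $t$ at a bead $\fraki$ requires a sequence $\fraki_0\sim\fraki_1\sim\dots\sim\fraki_n=\fraki$ of neighbours and ordered times at which consecutive paths meet, with $\fraki_0$ outside $F_L$. On good configurations, each transversal step requires two neighbouring paths to travel a distance comparable to their initial gap within a short time. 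By the Markov property and Gaussian tails, a chain of $n\sim L$ steps in time $T$ should cost roughly $(CT/n)^{n}$ or $e^{-cn\log n}$. Along a given line, the ordering means vertical displacement requires distance $\gtrsim L$, which again costs $e^{-cL^2/T}$. I will split the time interval into about $L^{\theta}$ pieces and control collision counts on each piece. This produces the third term $L^{\theta}(L')^4e^{-CL^{3\theta-2}}$, where the constraint $\theta\in(\tfrac23,\tfrac34)$ balances the time-slice length against displacement bounds. The term $e^{-(\log L)^2/c}$ comes from the residual probability, under the determinantal bounds, of locally dense clusters that allow quick transversal transmission. It is estimated via the polynomial decay in \eqref{e:TPDecay} at a scale $(\log L)$.

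\emph{Step 3: conclusion.} On the complement of these bad events, the two processes coincide on $F_{L/2}\times[0,T]$. I expect Step 2 to be the main obstacle. Because the dynamics is reflecting, information can in principle propagate through arbitrarily fast cascades of collisions. The combinatorial union over transversal chains must therefore be beaten by the path-meeting probabilities, and this requires uniform control of gaps from Step 1 together with the rewiring representation. I would first try the case where the chain moves only transversally, then handle mixed vertical and transversal chains via monotone sandwiching.
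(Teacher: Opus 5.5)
Your overall architecture matches the paper's:
\begin{itemize}
\item trace a disagreement in the window back through a chain of $\eta^{L'}$-collisions to a bead that started in $F_{L'}\setminus F_L$ or reached $\partial F_L$;
\item argue that the chain is long;
\item slice $[0,T]$ into pieces of length $L^{-\theta}$ and pigeonhole.
\end{itemize}

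\textbf{The chain estimate is missing.} The heuristic you put in its place in Step 2 does not work. Collisions along a chain are not independent Gaussian ``meeting'' events that multiply through the Markov property. After a collision the gap is zero, so successive collisions are strongly correlated. Your good configurations also give no lower bound on initial gaps, so individual steps can cost nothing. Nor can you read the coupled disagreement structure off \cref{thm:yeqz}. Rewiring is only an identity in law, for a single system with a common barrier $M_-$ and $M_+\equiv\infty$. So it applies neither to the truncated dynamics with box walls nor jointly to the pair $(\eta^{(x_0+F_L)},\eta^{L'})$.

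The paper instead combines a dynamic and a static input.
\begin{itemize}
\item \emph{Dynamic} (\cref{lem:2}): $\bbP(\sup_{t\le\delta}|Y_t(\fraki)-Y_0(\fraki)|\ge\varepsilon)\le c|Q_F|e^{-\varepsilon^2/(2\delta)}$. This is proved by dominating $Y$, via \cref{thm:monotone}, by a system in which $\fraki$ is the top bead of $\{\frakj\le\fraki\}$, bunched above one common barrier. Rewiring then bounds the upward displacement of $\fraki$ by $\max_{\frakj}\sup_{t\le\delta}(\WBar_t(\frakj)-\WBar_0(\frakj))$. This is where monotonicity and rewiring actually enter.
\item \emph{Static} (\cref{lem:1}): the probability that the initial configuration contains a chain of $M$ neighbours with all consecutive gaps at most $\varepsilon$ is at most $|F|\varepsilon^{M-1}(MC)^M$, from $\rho_M\le C_0^M M!$. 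This chain event, not a crowded window on a single line, is the initial-configuration input you need.
\end{itemize}
If no bead moves $\varepsilon/2$ during $[0,\delta]$, a collision chain of length $M$ in that interval forces all its initial gaps below $\varepsilon$. Optimising $\varepsilon\approx 1/(CM)$ yields \cref{thm:collisions}: $\bbP_\gamma(B(\delta,M))\le C|F|(1+|F|)e^{-1/(C\delta M^2)}$ whenever $\delta M^2(M-1)>2$. Only then do $\delta=L^{-\theta}$ and $M\approx\delta L/(T(\log L)^2)$ give $L^{\theta}(L')^4e^{-CL^{3\theta-2}}$. Here $\theta<\tfrac34$ is needed for $\delta M^2(M-1)>2$, and $\theta>\tfrac23$ for $\delta M^2\to0$. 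In your proposal this term and the range of $\theta$ are asserted, not derived.

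\textbf{Chain length and the $e^{-(\log L)^2/c}$ term.} You never justify that the chain has length of order $L$, and you misattribute this term. Every step of a chain changes $\frakl$ by one, since beads on the same line never interact directly. So there is no separate ``vertical'' case, and the ordering on a line is not the mechanism. The vertical extent of an $N$-step chain is at most $N\Delta_L$, with $\Delta_L$ the maximal neighbour gap.

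The paper first shows that the two end beads are at distance at least $L/100$ at time $0$: neither moves $L/100$ during $[0,T]$, by \cref{lem:2}. This, together with \cref{prp:11} controlling the number of active beads, is where $(L')^4e^{-cL^2}$ comes from. Consequently $N<L/(\log L)^2$ forces $\Delta_L>\log L$. That requires an empty interval of length $\tfrac12\log L$ on some line, which for the sine process has probability $e^{-c(\log L)^2}$ (\cref{prp:12}). This is the $e^{-(\log L)^2/c}$ term.

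The polynomial decay \eqref{e:TPDecay} of the truncated two-point function cannot give any super-polynomial bound. Dense clusters are also the wrong event: what must be excluded is an anomalously large gap, which would let a short chain bridge the distance $L/100$. Your Step 1 threshold $L^{\theta'}$ could play this role, at cost $e^{-cL^{2\theta'}}$ rather than $e^{-cL^2}$. But it only does so once it is used to force $N\gtrsim L^{1-\theta'}$, and with this threshold you must take $\theta'$ small depending on $\theta$ to stay within the range of $\theta$ for which the slicing works.
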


As an immediate corollary of the above Theorem (whose proof will be detailed in the rest of the section), 
we deduce Theorem \ref{th:main1} which we here restate in 
more rigorous terms (see also Remark \ref{rem:etaY} for the connection between $Y$ and $\eta$). 

\begin{corollary}\label{cor:1}
There exists a unique stationary $\OmegaBar$-valued process $(\eta_t)_{t \ge 0}$ such that for every fixed $x_0 \in F_\infty$, every compact set $F \subset F_\infty$ and every time horizon $T \ge 0$, it holds
\begin{equation}\label{eq:40}
(\eta_t\restr_F)_{t \in [0, T]} = \lim_{L \rightarrow \infty} (\eta_t^{(x_0 + F_L)}\restr_F)_{t \in [0, T]}
\end{equation}
with the limit on the r.h.s. being eventually constant. The process $(\eta_t)_{t \ge 0}$ will be called the dynamical bead model. 
It is stationary 
in the sense that $\mathrm{law}(\eta_t) = \bfP_\gamma$ for all $t \ge 0$, in particular $\eta_t\in\Omega$ 
almost surely for each fixed $t$. It is reversible and spatially stationary, meaning that for every $x \in F_\infty$, 
$(\tau_x\eta_t)_{t \ge 0}$ and $(\eta_t)_{t \ge 0}$ have the same law.
Furthermore, it holds that for a constant $C = C(\gamma, T) > 0$ we have 
\begin{equation}\label{eq:9}
\bbP_\gamma\big( ( \eta_t\restr_{F_{L/2}} )_{t \in [0, T]} \ne ( \eta^{L}_t\restr_{F_{L/2}} )_{t \in [0, T]} \big) \le e^{ -C (\log L)^2 } .
\end{equation}
\end{corollary}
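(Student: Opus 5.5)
The plan is to deduce everything from \cref{thm:prob.decay} through a Borel--Cantelli argument along the dyadic sequence $L_n = 2^n$. Fix $x_0$, a compact $F$ and a horizon $T$. We first apply \eqref{eq:101} with centre $x_0$, $L = L_n$ and $L' = L_{n+k}$, where $k$ is the smallest integer with $x_0 + F_{L_n} \subset F_{L'}$ (so $L'\le 2L_n+2|x_0|$). This compares $\eta^{(x_0+F_{L_n})}$ with $\eta^{L'}$ on $x_0+F_{L_n/2}$. Applying it again with centre $0$, $L=L'/2$, $L'$ replaced by $L'$ (and similar pairs) we compare consecutive boxes. In every case the right-hand side of \eqref{eq:101} is at most $e^{-(\log L_n)^2/c}$ plus terms like $L_n^{4}e^{-cL_n^2}$ and $L_n^{\theta+4}e^{-CL_n^{3\theta-2}}$. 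These are summable since $3\theta-2>0$.

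By Borel--Cantelli, almost surely there is an $n_0$ such that for $n\ge n_0$ the processes $(\eta^{(x_0+F_{L_n})}_t\restr_{x_0+F_{L_n/2}})_{t\le T}$ and $(\eta^{(x_0+F_{L_{n+1}})}_t\restr_{x_0+F_{L_n/2}})_{t\le T}$ coincide. Since $F\subset x_0+F_{L_n/2}$ eventually, the dyadic sequence is eventually constant on $F$.

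To pass from dyadic to arbitrary $L$ we will not use a union bound over all $L$. Instead we will use the monotonicity of \cref{thm:monotone}, or rather sandwich each $L\in[L_n,L_{n+1}]$ by applying \eqref{eq:101} directly to the pair $(L_n, L)$ and $(L,L_{n+1})$. Only countably many integer $L$ occur, since $F_L$ for non-integer $L$ differs only in the vertical extent. If needed, we restrict to integer $L$, which is the setting of \eqref{eq:defFL}. The sum over $L\in[L_n,L_{n+1}]$ of the bound is still summable, because the $e^{-(\log L)^2/c}$ term beats the factor $L$.

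This defines $\eta\restr_F$ on $[0,T]$. Consistency across different $F$, $T$ and $x_0$ is automatic, because any two centres eventually give nested boxes and the bounds above compare them. Uniqueness of a limit process follows from the limit being eventually constant.

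Stationarity comes from the fact that each truncated dynamics preserves $\bfP_\gamma$ (the Proposition in \cref{sec:truncated-dynamics}). Hence $\eta_t^{(F_L)}\restr_F$ has law $\bfP_\gamma\restr_F$ for all $L$, and the almost sure eventual equality transfers this to $\eta_t\restr_F$. Since local restrictions determine the law on $(\Omega,\calF)$, we get $\law(\eta_t)=\bfP_\gamma$, and in particular $\eta_t\in\Omega$ almost surely.

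Reversibility works the same way. Each finite dynamics is reversible with respect to $\bfP_\gamma$, because $\bfP_\gamma$ conditioned outside $F$ is uniform and reflected Brownian motion is reversible for Lebesgue measure. So the law of $(\eta^{(F_L)}_{T-t}\restr_F)_{t\le T}$ equals that of $(\eta^{(F_L)}_{t}\restr_F)_{t\le T}$, and this passes to the limit.

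For spatial stationarity, translation invariance of $\bfP_\gamma$ together with the independence of the noise shows that $\tau_x\eta^{(x_0+F_L)}$ has the law of $\eta^{(x_0-x+F_L)}$, modulo relabelling, which does not affect $\eta$. Since the limit does not depend on the centre, $\tau_x\eta\eqd\eta$. A subtle point here is that $x$ is not necessarily a lattice shift in the vertical variable, but $\bfP_\gamma$ is invariant under all translations of $F_\infty$, so this is fine.

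Finally, \eqref{eq:9} follows from a telescoping bound. The event in \eqref{eq:9} is contained in the union over $n\ge0$ of the events that the restrictions to $F_{L/2}$ of $\eta^{(F_{2^nL})}$ and $\eta^{(F_{2^{n+1}L})}$ differ on $[0,T]$. By \eqref{eq:101} with $x_0=0$, each of these has probability $\lesssim e^{-(\log 2^nL)^2/c}+\dots$, and summing over $n$ gives at most $e^{-C(\log L)^2}$ after adjusting $C$ for $L\ge L_0$; small $L$ are absorbed by shrinking $C$.

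The main obstacle is the bookkeeping of the polynomial prefactors $(L')^4$. These force comparisons to use $L'$ at most polynomial in $L$, which the dyadic scheme guarantees. The argument also needs to check that the labelling convention of \cref{rem:interchangeably} does not interfere with the eventual equality, and it does not, since $\eta$ is defined as a point measure.
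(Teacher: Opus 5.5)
Your proposal is correct and follows the paper's proof. The shared steps are: Borel--Cantelli applied to \eqref{eq:101}; lifting by consistency; independence of the centre $x_0$ by comparison with a large centred box (the paper takes $L'=2L$); stationarity and reversibility inherited from the truncated dynamics; spatial stationarity from translation invariance of $\bfP_\gamma$; and \eqref{eq:9} by summing the right-hand side of \eqref{eq:101}. The only difference is bookkeeping: the paper simply takes $L'=L+1$, which already works because $\sum_L e^{-(\log L)^2/c}<\infty$, so your dyadic scheme is unnecessary — and your filling-in step ends up doing the union bound over all $L$ that you said you would avoid.
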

\begin{proof}
Applying \cref{thm:prob.decay} in the case $L' = L+1$, we see by Borel--Cantelli that for any fixed compact set $F \subset F_\infty$, it holds that the following sequence of $\OmegaBar(F)$-valued processes $( \eta^L\restr_{F} )_{L \in \bbN}$ is eventually constant, in which $\eta^L\restr_F = (\eta^L_t\restr_F)_{t \in [0, T]}$. We define $\eta^F := \lim_{L\rightarrow\infty} \eta^L\restr_F$.

Using a standard argument, we lift these local processes to a process that is defined globally in space and time. For the spatial lift, consider a sequence of compact sets $F_n \uparrow F_\infty$. One can easily check that the family of process $(\eta^{F_n} : n \in \bbN)$ is almost surely consistent, in the sense that $\eta^{F_n}\restr_{F_m} = \eta^{F_m}$. This allows us to lift to a global $\OmegaBar$-valued process $(\eta_t)_{t \in [0,T]}$ for which
$$ \eta_t \restr_F = \eta_t^F ,\qquad  \forall  F \subset F_\infty .$$
Considering a similar lift in the time variable $T \ge 0$, we obtain a process $(\eta_t)_{t \ge 0}$ on the full time horizon.

Next we prove that the limit is independent of the location of the $F_L$ box. For $L$ large enough, depending on $x_0$, it holds $x_0 + F_L \subset F_{2L}$. Applying \cref{thm:prob.decay} with $L' = 2L$, we see that $\eta^{(x_0+F_L)}$ and $\eta^{2L}$ eventually agree on every compact set and time interval. Since $\eta^{2L}$ converges locally to $\eta$, the claim is proved. 

From this we obtain the spatial stationarity of the process. The bead measure is invariant under spatial translations, therefore for every $x \in F_\infty$, $(\tau_x\eta^L_t)_{t \ge 0}$ has the same law as $(\eta^{(x+F_L)}_t)_{t \ge 0}$. By \eqref{eq:40}, we obtain 
$$ \law (\eta_t\restr_F)_{t \in [0, T]} = \lim_{L \rightarrow \infty} \law (\eta^{(x+F_L)}_t)_{t \ge 0} = \lim_{L \rightarrow \infty} \law (\tau_x\eta^L_t)_{t \ge 0} = \law (\tau_x\eta_t\restr_F)_{t \in [0, T]} $$

Stationarity and reversibility in time pass to this limit from the truncated dynamics, for which they follow from \cref{prp:5} with $c = 0$.

Equation \eqref{eq:9} is obtained as a bound on the sum of the r.h.s. of \eqref{eq:101}.
\end{proof}

To conclude, let us state a version of \cref{thm:prob.decay} in which we compare $\eta^L$ with the annulus process 
$\etaTil^L$ in \eqref{equ:TrAn}, and make rigorous the heuristic discussion at the end of the previous section. 

\begin{theorem}\label{thm:annulus}
For all $T \ge 0$, $\gamma \in (-1, 1)$, there exists $C=C(\gamma, T) > 0$ so that
\begin{equation}\label{eq:41}
\bbP_\gamma\big( ( \eta^L_t\restr_{G_L} )_{t \in [0, T]} \ne ( \etaTil^L_t\restr_{G_L} )_{t \in [0, T]} \big) \le  C e^{ - C(\log L)^2}.
\end{equation}
for $G_L \eqdef F_{L/6} \cup (F_L \setminus F_{L/3})$. 
\end{theorem}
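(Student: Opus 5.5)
The plan is to compare $\eta^L$ and $\etaTil^L$ through the same machinery used for \cref{thm:prob.decay}. We drive both dynamics with the same Brownian paths and start both from the same configuration $y_0$ sampled from $\bfP_\gamma$. The two dynamics differ only in that $\etaTil^L$ freezes the beads initially in the annulus $A_L \eqdef F_{L/4}\setminus F_{L/5}$. A discrepancy between $\eta^L\restr_{G_L}$ and $\etaTil^L\restr_{G_L}$ must therefore be caused by a chain of collisions. Such a chain starts at a bead of $A_L$, which moves in $\eta^L$ but not in $\etaTil^L$. It ends at a bead of $G_L$. Since $G_L$ lies at distance of order $L$ from $A_L$ (at least $L/5-L/6$ inward and $L/3-L/4$ outward), any such chain has spatial extent at least $cL$.

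\textbf{Step 1: reduce the discrepancy to a collision chain.} I would use the monotonicity result, \cref{thm:monotone}. Freezing beads amounts to changing the setup, so it lets me sandwich both processes between extremal dynamics. The starting point is the following observation: a bead whose trajectory differs between the two processes must, at its first time of difference, be in contact with a neighbour whose trajectory already differs. Iterating this backwards in time produces a nearest-neighbour path in $Q_\infty$ whose consecutive beads collide in time order within $[0,T]$. The path starts in $A_L$ and reaches $G_L$. I would make this precise on the pairwise Skorokhod/rewiring representation of \cref{lem:3} and \cref{thm:yeqz}. There, each interaction is a single pairwise event and the propagation of disagreement is transparent.

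\textbf{Step 2: bound the probability of long collision chains.} This is exactly the ingredient behind \cref{thm:prob.decay}. That result rests on a bound on the number of collisions (the announced \cref{thm:collisions}) together with good-gap estimates for $\bfP_\gamma$ from the determinantal structure \eqref{eq:soshni2}. I would cover the distance $cL$ by roughly $\log L$ concentric shells, each of width $L/\log L$. In each shell, the chain must be transmitted through a region where, with high probability, particle gaps are neither anomalously large nor small and Brownian displacements over $[0,T]$ are bounded. Transmission across one shell should then fail with probability bounded away from $1$. An alternative is a disagreement-percolation bound in the style of \cite{TAoP}, giving probability at most $e^{-c\log L}$ per shell. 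Multiplying over independent-ish shells gives the bound $e^{-C(\log L)^2}$. The bad events (huge Brownian increments, bad gaps, more than polynomially many collisions in $F_L$) contribute terms like $L^4 e^{-cL^2}$ and stretched-exponential terms. These are absorbed into $Ce^{-C(\log L)^2}$.

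\textbf{Main obstacle.} The key difficulty is Step 2, specifically the transversal propagation across lines. Near-coincident particles on adjacent lines can relay a disagreement quickly, so the shell-to-shell decorrelation must use both the stationary gap estimates and the Brownian noise. Since the statement is structurally the same as \cref{thm:prob.decay}, I would first try to reuse its proof verbatim. The only change is to replace the boundary $\partial F_L$, viewed as the source of disagreement, by the frozen annulus. The one subtlety to check is that the inner region $\FTil^0_L$ and outer region $\FTil^1_L$ are handled symmetrically. Chains started on the annulus can move inward or outward, so a union bound over the two directions suffices.
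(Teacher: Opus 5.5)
Your Step~2 has a genuine gap, and it is the core of the proof. The shell scheme supplies neither a mechanism for the per-shell bound nor a justification for multiplying. Nothing explains why a disagreement should cross a shell of width $L/\log L$ with probability at most $L^{-c}$; a bound merely bounded away from $1$ would give $(1-c)^{\log L}=L^{-c'}$, not $e^{-C(\log L)^2}$. Crossings of different shells are also far from independent: they are driven by the same dynamics on the same time interval, under a measure whose correlations decay only like $|x|^{-2}$, cf.\ \eqref{e:TPDecay}. The exponent is matched to the target rather than derived. (Also, two beads that touch do so at infinitely many times, so ``more than polynomially many collisions'' is not a usable bad event; what can be controlled are collision chains of distinct labels.) Your fallback, reusing the proof of \cref{thm:prob.decay} verbatim, is in fact what the paper does, since the two theorems are proved jointly. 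As a proposal, however, it outsources precisely the missing content.

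The missing idea is a length-versus-time trade-off for collision chains. First, the backtracked chain must have $N\ge L/(\log L)^2$ links. By \cref{lem:2}, up to probability $O(L^4e^{-cL^2})$ no bead moves by $L/100$ during $[0,T]$, so at time $0$ the two ends of the chain lie in sets at distance of order $L$. On the other hand, $N$ links span at most $N$ lines and a vertical distance $N\Delta_L(y_0)$, and $\Delta_L(y_0)>\log L$ has probability at most $ce^{-(\log L)^2/c}$ by \cref{prp:12}. This sine-process gap bound is the only source of the $(\log L)^2$. Second, cut $[0,T]$ into windows of length $\delta=L^{-\theta}$. By pigeonhole some window carries a collision chain of $M\approx\delta L/(T(\log L)^2)$ distinct beads. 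By stationarity and \cref{thm:collisions} this has probability $\lesssim\delta^{-1}L^4e^{-1/(C\delta M^2)}$, which is stretched-exponentially small for $\theta\in(\tfrac23,\tfrac34)$. Theorem~\ref{thm:collisions} is exactly what tames the fast transversal relays you worry about. Either some bead moves by $\varepsilon/2$ within the window (\cref{lem:2}), or the configuration at the start of the window contains an $M$-chain with all gaps $\le\varepsilon$, which has probability $\le|F|\varepsilon^{M-1}(MC)^M$ by \cref{lem:1}.

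For this to apply, the chain must be a collision chain of a single stationary process. The paper secures this in the reduction step through an infection process driven by $\eta^L$ alone, together with \cref{prp:8}. It does not use monotonicity or the rewiring representation; the latter is only an identity in law and cannot compare $\eta^L$ and $\etaTil^L$ pathwise. Note also that infection must originate not only from the frozen annulus but also from beads leaving $\FTil_L$, since $\etaTil^L$ reflects beads at $\partial\FTil_L$ while $\eta^L$ does not.
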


\subsection{Some equilibrium estimates}\label{sec:equi}

Throughout this section, we shall often work under the event that the collection of labels $Q_L$ in \eqref{eq:defQL}, 
belongs to the deterministic set $P_L$ defined as
\begin{equation}\label{eq:88}
P_L \eqdef \{ (\frakl, \frakk) \in Q_\infty : |\frakl| \le L, |\frakk| \le L \}.
\end{equation}
The reason why we are allowed to do so is contained in the following proposition which further provides moments of 
the cardinality of $Q_L$, denoted by $|Q_L|$. 
\begin{proposition}\label{prp:11}
There is a constant $c > 0$ such that, for every $\gamma\in(-1,1)$ and integers $M\ge L\ge 1$, it holds
\begin{equation}\label{eq:90}
\bfP_\gamma( Q_L \not\subset P_M) \le c e^{- M^2/c}\,.
\end{equation}
Furthermore, $|Q_L|$ admits finite moments of all orders, i.e. there exists a constant $c=c(p,\gamma)$ such that 
\begin{equation}\label{eq:MomQL}
\bfE_\gamma[|Q_L|^p]\leq c L^{2p}\,.
\end{equation}
\end{proposition}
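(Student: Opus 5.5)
The plan is to reduce both claims to tail bounds for particle counts along single lines, and then control those counts using two facts: each line is a sine process, and a determinantal process restricted to a set has a count equal in law to a sum of independent Bernoulli variables.

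\emph{Reduction of \eqref{eq:90}.} By definition, $Q_L$ consists of the labels $\fraki=(\frakl,\frakk)$ with $|\frakl|\le L$ and $y_0(\fraki)\in[-L,L]$. The condition $|\frakl|\le L\le M$ holds automatically, so $Q_L\not\subset P_M$ forces some line $\frakl\in\{-L,\dots,L\}$ to carry a bead with $|\frakk|>M$ and $|y(\frakl,\frakk)|\le L$.

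Next we relate labels to positions. The label $(0,0)$ is the first bead on line $0$ at height $\ge 0$, and interlacing (item (iii) of \cref{def:omega}) links labels on neighbouring lines. From this we get that $\frakk(\frakl,\cdot)$ differs by at most $1/2$ from the height $\ch(\frakl,z)$, up to a shift controlled by $\ch(\frakl,0)-\ch(0,0)$. That shift is bounded by $|\frakl|/2\le L/2$, because $\ch$ changes by $\pm 1/2$ between neighbouring lines at fixed $z$. Consequently, $|\frakk|>M$ and $|y|\le L$ force the count $N_{\frakl}\eqdef\#(\{\frakl\}\times[-L,L])$ to exceed roughly $M-L/2-1$.

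If $M\le 4L$ this cannot happen cheaply by shift alone, so the count must be at least $M/2$ up to constants. If $M\ge 4L$, the count must be at least $M/2$ while its mean is only $2L/\pi\le M/(2\pi)$. In either case we need $\bfP_\gamma(N_\frakl\ge M/2)\le e^{-M^2/c}$. The first case needs a little care with constants when $M$ is comparable to $L$; we absorb it by noting that $\bfP(\cdot)\le 1\le ce^{-M^2/c}$ fails only for large $M$, and for large $M$ the same deviation estimate applies after enlarging $c$.

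\emph{Key tail estimate.} Each line is a sine process, so $N_\frakl$ is distributed as a sum of independent Bernoulli variables with mean $2L/\pi$. Its variance is $O(\log L)$, but we only need a crude bound. Using Chernoff together with the determinantal identity $\bfE[s^{N}]=\det(I-(1-s)K\restr_{[-L,L]})$, and the fact that all eigenvalues of the sine kernel on $[-L,L]$ lie in $[0,1]$, we get $\bfE[e^{\lambda N}]\le e^{(e^\lambda-1)2L/\pi}$.

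Chernoff alone only gives an $e^{-cM\log(M/L)}$ tail, which is not Gaussian in $M$. To reach $e^{-M^2/c}$ we exploit the structure of the sine kernel more finely. The number of eigenvalues of $K\restr_{[-L,L]}$ exceeding $e^{-n}$ is at most $2L/\pi+O(n+\log L)$, and the eigenvalues beyond that index decay superexponentially; this is the Landau--Widom/Slepian asymptotics for prolate spheroidal eigenvalues. The event $N\ge m$ with $m-2L/\pi\ge k$ requires roughly $k$ successes among Bernoulli variables whose parameters are about $e^{-j^2/C}$ (the $j$-th parameter past the bulk), which has probability $\lesssim\prod_{j\le k}e^{-j^2/C}\le e^{-k^3/C}$, which beats $e^{-M^2/c}$ comfortably. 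A union bound over the $2L+1\le 2M+1$ lines then gives \eqref{eq:90}, with the polynomial prefactor absorbed into the Gaussian.

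\emph{Proof of \eqref{eq:MomQL}.} We bound $|Q_L|\le\sum_{|\frakl|\le L}N_\frakl$, so by Minkowski $\|\,|Q_L|\,\|_p\le(2L+1)\|N_0\|_p$. The Chernoff bound above gives $\|N_0\|_p\le c(p)L$, and hence $\bfE_\gamma|Q_L|^p\le cL^{2p}$. Alternatively, $\bfE_\gamma|Q_L|^p\le|P_{2L}|^p+\sum_{M\ge 2L}(4M+1)^{2p}\,ce^{-M^2/c}$ by \eqref{eq:90}.

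The main obstacle is the Gaussian tail in $M$: Chernoff alone gives only an $e^{-M\log M}$-type decay, so the proof must invoke the superexponential decay of the sine-kernel eigenvalues beyond the bulk.
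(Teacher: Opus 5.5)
Your proof of \eqref{eq:MomQL} is fine, and more direct than the paper's, which derives the moment bound from \eqref{eq:90} via the tail formula; it uses Minkowski over the $2L+1$ lines plus Poisson domination of the Bernoulli sum. The proof of \eqref{eq:90}, however, has two genuine gaps.

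\textbf{First gap: the counting window is too large.} Reducing to the two-sided count $N_\frakl=\#(\{\frakl\}\times[-L,L])$ throws away too much. Its mean is $2L/\pi\approx 0.64\,L$. So for $L\le M<4L/\pi$ the threshold $M/2$ lies below the mean; even your sharper threshold $M-L/2$ does when $M<(\tfrac12+\tfrac2\pi)L$. Since $N_\frakl$ concentrates, $\bfP_\gamma(N_\frakl\ge M/2)\to1$ along $M=L\to\infty$. The constant $c$ must be uniform over all $M\ge L$, so no enlargement of $c$ rescues this: the inequality you say you need is false in that regime.

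The fix, which is what the paper does, is to use the pinning convention. The bead $(0,0)$ is the lowest bead at nonnegative height on line $0$, and interlacement gives $y(\frakl,\frakk)\ge y(0,\frakk-\frakl/2)$. Hence a bead with label $\frakk>M$ at height $\le L$ forces at least $M/2$ beads of line $0$ in the one-sided window $[0,L]$; symmetrically, $\frakk<-M$ forces them in $[-L,0)$. That window has mean $L/\pi\le M/\pi<M/2$. The paper then enlarges the window to $[0,M]$, which turns the event into a deviation of fixed relative size on an interval of length $M$.

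\textbf{Second gap: the Gaussian tail rests on a misremembered spectral fact.} Consider the sine kernel on an interval of length $\ell$, with bulk index $N_0\approx\ell/\pi$. The eigenvalues $\lambda_{N_0+j}$ past the bulk cannot be bounded by $e^{-j^2/C}$, with $C$ independent of $\ell$, in the range $j\lesssim\ell$. If they could, your own computation would give $\bfP(N\ge N_0+k)\le e^{-k^3/C}$ for $k\asymp\ell$. That contradicts the lower bound in the large deviation principle for sine-process counts, whose speed is exactly $\ell^2$. In that range the decay is essentially only exponential in $j$; it becomes superexponential only for $j\gg\ell$.

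A uniform exponential bound beyond the plunge region would still give $e^{-ck^2}$ after summing. But that needs nonasymptotic estimates on prolate spheroidal eigenvalues, which Landau--Widom does not supply: it is an asymptotic count of eigenvalues in $(\epsilon,1-\epsilon)$ for fixed $\epsilon$ as $\ell\to\infty$. The paper avoids all of this. It invokes the Holcomb--Valk\'o large deviation principle at speed $M^2$ for the counting function of the sine process, applied to the event $\#(\{0\}\times[0,M])\ge M/2$, whose mean count is $M/\pi$.
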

\begin{proof}
We first prove~\eqref{eq:90}. For $\frakl \in -L{:}L$, define $\frakk_{+}(\frakl) > 0$ to be the smallest such that $(\frakl, \frakk_+(\frakl)) \in Q_\infty \setminus P_M$, and similarly let $\frakk_-(\frakl)<0$ be the largest such. Recalling \eqref{eq:defFL}, we have
$$ \{Q_L \not\subset P_M\} \subseteq \bigcup_{\frakl \in -L{:}L} \bigcup_{\sigma \in \pm} \{ |y(\frakl,\frakk_\sigma(\frakl))| \le L \}. $$
By a union bound, it is enough to prove that
\begin{equation}\label{eq:LDsine}
  \sup_{\frakl \in -L{:}L} \sup_{\sigma \in \pm}\bfP_\gamma(|y(\frakl,\frakk_\sigma(\frakl))| \le L)\le c e^{-M^2/c}.
\end{equation}

We consider the case $0\le \frakl\le L$ and $\sigma = +$, the remaining cases are treated similarly. Then by interlacement, we have that $y(\frakl,\frakk_+)\ge y(0,\frakk_+-\frakl/2)\ge y(0,\lfloor M/2\rfloor+1)$. On the other hand, on the event $y(0,\lfloor M/2\rfloor+1)\le L$ there are at least $M/2$ points in $\{0\}\times[0,L]$, and hence in $\{0\}\times[0,M]$.
  Recall from \cref{sec:thebeadmodel} that, under $\bfP_\gamma$, the one-dimensional point process $\omega\cap(\{0\}\times\mathbb R)$ is the sine point process, normalized to have particle density $1/\pi$, so that the event $|\omega\cap(\{0\}\times[0,M])|\ge M/2$ is a large deviation event. In fact, \cite[Th.\ 1]{holcomb2015large} (specialized to the case $\beta=2$ and rescaled to our particle density) implies that this event has probability at most $c e^{-M^2/c}$.
  This implies \eqref{eq:LDsine}, hence~\eqref{eq:90} after absorbing the factor $4L+2$ from the union bound into the constant.
\medskip

Let us now turn to~\eqref{eq:MomQL}. Notice that, on $\{Q_L\subset P_M\}$, we have $|Q_L|\le (2L+1)(2M+1)\le 9LM$. Thus, for $x\ge 9L^2$, choosing $M=\lfloor x/(9L)\rfloor$ yields
$$ \bfP_\gamma(|Q_L|>x) \le \bfP_\gamma(Q_L \not\subset P_M) \le c e^{-M^2/c} \le c e^{-x^2/(cL^2)}, $$
after increasing $c$. By the tail integral formula, for every $p>0$, there is a constant $c=c(p,\gamma)>0$ 
(which might change from step to step) such that 
$$ \bfE_\gamma[|Q_L|^p] = p\int_0^\infty x^{p-1}\bfP_\gamma(|Q_L|>x)\dd x \le (9L^2)^p + cp\int_{9L^2}^\infty x^{p-1}e^{-x^2/(cL^2)}\dd x \le c L^{2p}. $$
\end{proof}

Another statistic that we will need concerns the maximal distance between neighbouring particles. 
Given $y\in D_\infty$ and $L\in\mathbb N$, this is defined as 
\begin{equation}\label{eq:2}
\Delta_L(y) \eqdef \sup \{|y(\fraki)-y(\frakj)|, \fraki\sim \frakj, \fraki\in Q_L,\frakj\in Q_L \}.
\end{equation}

\begin{proposition}\label{prp:12}
There exists a universal constant $c\in(0,\infty)$ such that for every $\gamma\in(-1,1)$ and $L>1$ it holds
\begin{equation}\label{eq:Dyson}
\bfP_\gamma(\Delta_L(y)>\log L)\le c e^{ -(\log L)^2/c}.
\end{equation}
  \end{proposition}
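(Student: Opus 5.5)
We reduce the statement to a gap estimate for the sine process on a single line, in the same spirit as the proof of \cref{prp:11}. First we intersect with the event $\{Q_L\subset P_M\}$ for $M=L$ (or $M=2L$). By \eqref{eq:90} its complement has probability at most $ce^{-L^2/c}$, which is much smaller than $e^{-(\log L)^2/c}$. On this event, the pairs $\fraki\sim\frakj$ with $\fraki,\frakj\in Q_L$ are among the $O(L^2)$ neighbouring pairs of $P_M$. Moreover, since $Q_L$ only contains beads in $F_L$, every such pair has both coordinates in $[-L,L]$.

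The key deterministic observation uses interlacement. If $\fraki=(\frakl,\frakk)$ and $\frakj=(\frakl+1,\frakk+\thalf)$, then
\[
y(\frakl,\frakk)<y(\frakl+1,\frakk+\thalf)<y(\frakl,\frakk+1).
\]
So $|y(\fraki)-y(\frakj)|$ is bounded by a single nearest-neighbour gap $y(\frakl,\frakk+1)-y(\frakl,\frakk)$ on line $\frakl$; the case $\frakj=(\frakl+1,\frakk-\thalf)$ is symmetric. Hence $\Delta_L(y)>\log L$ forces some line $\frakl\in-L{:}L$ to have two consecutive particles at distance more than $\log L$, with the lower one in $[-L,L]$. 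Equivalently, the line has an empty interval of length $\log L$ starting in $[-L,L]$. Covering $[-L-1,L+1]$ by $O(L/\log L)$ intervals of length $\tfrac12\log L$, such an emptiness implies that one of these fixed intervals $I$ contains no point of $\omega\cap(\{\frakl\}\times\bbR)$.

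Next we bound the probability of such a hole. By item (b) of \cref{sec:thebeadmodel}, each line is the sine process with density $1/\pi$. The probability that a fixed interval of length $s$ is empty for the sine process is the Fredholm determinant gap probability. This is classically known (des Cloizeaux--Mehta, Widom, Dyson asymptotics) to decay like $\exp(-c s^2)$, and it is uniformly bounded by $Ce^{-s^2/C}$. Alternatively, it can be read off from the large-deviation bound of \cite{holcomb2015large} already used in \cref{prp:11}: zero points in an interval of length $s$ is a downward deviation of the count, which costs $e^{-cs^2}$. With $s=\tfrac12\log L$, a union bound over the $O(L)$ lines and $O(L/\log L)$ intervals gives
\[
\bfP_\gamma(\Delta_L>\log L)\le cL^2 e^{-(\log L)^2/c}+ce^{-L^2/c}\le c' e^{-(\log L)^2/c'},
\]
after enlarging the constant, since $L^2=e^{2\log L}$ is absorbed. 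The bound is uniform in $\gamma$ because the sine-process marginal does not depend on $\gamma$.

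The main obstacle is justifying the Gaussian-in-$s$ gap probability bound with a constant that works for all $s$, rather than only as an asymptotic statement. I would cite it directly. If needed, a cruder route also works: the hole probability is $\det(I-K_{\mathrm{sine}})_{L^2(I)}$, and since the eigenvalues $\lambda_k$ of the sine kernel on $I$ satisfy $\lambda_k\ge1/2$ for $k\lesssim s$, we get $\det\le\prod_k(1-\lambda_k)$. That gives $e^{-cs}$ only, which is insufficient, so the quadratic decay genuinely has to come from the known asymptotics or from \cite{holcomb2015large}.
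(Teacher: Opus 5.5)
Your proposal is correct and follows essentially the same route as the paper: interlacement turns $\Delta_L>\log L$ into an empty interval of length $\log L$ on some line $\frakl\in-L{:}L$, which contains one of $O(L/\log L)$ fixed intervals of length $\tfrac12\log L$, and a union bound together with the sine-process hole asymptotics (the paper cites Ehrhardt and Krasovsky for $e^{-cN^2+O(\log N)}$) finishes the proof. Two side remarks: the preliminary intersection with $\{Q_L\subset P_M\}$ is unnecessary, and the ``obstacle'' you flag is not a real one, since the large-$s$ asymptotic bound combined with the trivial bound $1$ for bounded $s$ (and for $L$ near $1$) gives the uniform estimate after enlarging constants.
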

  \begin{proof}
    Note that, by the interlacing condition \eqref{eq:35}, $\Delta_L(y)>\log L$ implies that there exists $\frakl\in-L{:}L$ and an interval $(a,b)$ with $b-a=\log L$ such that $\{\frakl\}\times (a,b)$ contains no points of $\omega$, and is contained in $F_L$. In turn, this gives that there is a set of the form $\{\frakl\}\times{[\frac j2\log L,\frac{j+1}2\log L]}$, with $j\in\mathbb Z$, that is contained in $F_L$ and that contains no particles.
    Then, by  translation invariance of $\bfP_\gamma$ and a union bound, it is sufficient to prove that
    \begin{align}
      \bfP_\gamma((\{0\}\times[0,\thalf\log L])\cap \omega=\emptyset)\le c e^{ -(\log L)^2/c}.
    \end{align}
The point process $\omega$ restricted to a single line has the distribution of the sine point process and
    for the latter, it is known that the probability that the interval $[0,N]$ is empty behaves for large $N$ like $e^{-c N^2 +O(\log N)}$ for a strictly positive constant $c$ \cite{ehrhardt2006dyson,krasovsky2004gap} from which the statement follows.   \end{proof}

  To introduce the next equilibrium statement, we need a preliminary definition.
  \begin{definition}\label{def:chain}
Given $M\in\mathbb N$, we say that a sequence of distinct  sites 
$\fraki_1=(\frakl_1,\frakk_1), \ldots , \fraki_M=(\frakl_M,\frakk_M) \in Q_\infty$ forms a \textit{chain} if $\fraki_i \sim \fraki_{i+1}$ 
for all $i \in 1{:}M-1$ and will be denoted by $\fraki_{1:M}$. 
For fixed $M\in\mathbb N$, chain $\fraki_{1:M}$ and $y\in D_\infty$, 
we define the maximal distance between neighbouring particles as 
\begin{equation}\label{eq:1}
S(\fraki_{1:M},y) \eqdef \max_{i \in 1{:}M-1} |y(\fraki_i) - y(\fraki_{i+1})|\,.
\end{equation}
  \end{definition}

\begin{lemma}\label{lem:1}
For every $\gamma\in(-1,1)$ there exists $C=C(\gamma)\in(0,\infty) $ such that for every bounded, non-degenerate $F \subset F_\infty$, $M\in\mathbb N,\varepsilon>0$, we have
\begin{align}
   \bfP_\gamma(\exists \text{ chain } \fraki_{1:M} : \{\fraki_1,\ldots,\fraki_M\} \cap Q_F \ne \emptyset,   S(\fraki_{1:M},y) \le \varepsilon ) \le |F| \varepsilon^{M-1}[ M C]^M\,.
\end{align}
\end{lemma}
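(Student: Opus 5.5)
We will prove the bound by a first-moment computation based on the determinantal structure, specifically the correlation bound \eqref{eq:soshni2}. The event says that some chain of $M$ distinct neighbouring labels, one of which lies in $Q_F$, has consecutive gaps at most $\varepsilon$. Since neighbours in $Q_\infty$ sit on adjacent lines, the chain corresponds to $M$ points of $\omega$ at positions $(\frakl_i, y_i)$ with $|\frakl_i-\frakl_{i+1}|=1$ and $|y_i-y_{i+1}|\le\varepsilon$, and one of them, say $(\frakl_m,y_m)$, lies in $F$. The labels are distinct, so the points are distinct. We will therefore drop the labels and bound the probability that $\omega$ contains such a configuration of $M$ distinct points.

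We first fix the combinatorial data: the index $m\in 1{:}M$ of the point in $F$, and the sequence of line increments $\frakl_{i+1}-\frakl_i\in\{\pm1\}$. There are at most $M2^{M-1}$ choices. For fixed data, the expected number of ordered $M$-tuples of distinct points of $\omega$ in this configuration equals, by \eqref{eq:soshni} (or its factorial-moment version), the integral of $\rho_M$ over the set where $y_m\in F_{\frakl_m}$ (the section of $F$ on line $\frakl_m$) and $|y_i-y_{i+1}|\le \varepsilon$. The probability of the event is bounded by this expectation. The key pointwise input is Hadamard's inequality. The matrix $(K_\gamma(\frakl_i-\frakl_j,y_i-y_j))$ is not symmetric, so we will instead bound $|\rho_M|$ by $\prod_i \|{\rm row}_i\|_2 \le (\sqrt M\,\|K_\gamma\|_\infty)^M$. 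We will check from \eqref{eq:J} that $K_\gamma$ is uniformly bounded for $|\frakl|\le 1$. For arbitrary line differences, though, $K_\gamma(\frakl,\cdot)$ with $\frakl<0$ involves the integral over $\bbR\setminus[-1,1]$ of $|\gamma+i\varphi\sqrt{1-\gamma^2}|^{\frakl}$. This decays like $|\varphi|^{\frakl}$, so it is absolutely convergent for $\frakl\le -2$. For $\frakl=-1$ it is only conditionally convergent, but it gives a bounded function of $y$ away from $y=0$. We will verify that $\sup_{\frakl, y}|K_\gamma(\frakl,y)|\le C_0(\gamma)$, handling the $\frakl=-1$ case by an explicit evaluation or by a boundedness argument near $y=0$.

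Granting $|\rho_M|\le (C_0\sqrt M)^M$, we integrate: $y_m$ ranges over a set of measure $\Leb(F_{\frakl_m})\le |F|$, and each of the other $M-1$ coordinates is integrated successively outward from $m$ over an interval of length $2\varepsilon$. This gives $|F|(2\varepsilon)^{M-1}(C_0\sqrt M)^M$. Here $|F|$ is interpreted as $\sum_\frakl \Leb(F_\frakl)$; summing over $\frakl_m$ is absorbed because $F$ is the union of its sections. Multiplying by the $M2^{M-1}$ combinatorial choices yields $|F|\varepsilon^{M-1}(CM)^M$, with room to spare, since $\sqrt M^M\le M^M$.

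The main obstacle is the uniform bound on the kernel. This matters especially if $\rho_M$ is to be controlled when points on nonadjacent lines appear in the same determinant, because the chain may revisit lines and the entries $K_\gamma(\frakl,\cdot)$ for large $|\frakl|$ are needed. For $\frakl\ge0$ the integrand has modulus at most $1$ on $[-1,1]$, so the entries are bounded by $1/\pi$. For $\frakl\le-2$ we bound the integral by $\int_{|\varphi|>1}|\varphi\sqrt{1-\gamma^2}|^{-2}\,\dd\varphi$, which is $\gamma$-dependent and finite. The only delicate entry is $\frakl=-1$, where we would compute the principal-value integral explicitly, as an exponential integral in $y$. If it turns out to be unbounded near $y=0$ (a logarithmic singularity), we instead bound it by $C(1+|\log|y||)$, keep Hadamard's inequality, and integrate. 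Locally integrable logarithmic factors only cost an extra factor $C^M$ after integrating over intervals of length $\varepsilon\le 1$; for $\varepsilon>1$ the claimed bound is trivial anyway once $C\ge1$.
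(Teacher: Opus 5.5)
Your proposal is correct and follows essentially the paper's argument: a uniform bound $|K_\gamma|\le C_0(\gamma)$, a pointwise bound on $\rho_M$, integration of $\rho_M$ over the set of $M$-tuples with $y_r\in F_\frakl$ and consecutive gaps at most $\varepsilon$, and a union bound over $r$, $\frakl$ and the $2^{M-1}$ line sequences. The paper bounds $\rho_M$ by the cruder Leibniz estimate $C_0^M M!$ where you use Hadamard's $(\sqrt M\,C_0)^M$; your worry about $\frakl=-1$ is unnecessary, because splitting $(\gamma+i\varphi\sqrt{1-\gamma^2})^{-1}$ into an absolutely integrable even part and an odd part $\approx 1/(i\varphi\sqrt{1-\gamma^2})$ reduces $K_\gamma(-1,y)$ to a bounded sine-integral term with only a jump at $y=0$, so the logarithmic fallback is never needed.
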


\begin{proof}

  It is easy to see from \eqref{eq:J} that $|K_\gamma(\frakl,y)|\le C_0(\gamma)$ for some constant $C_0(\gamma)$ depending only on $\gamma\in(-1,1)$. As a consequence (recalling also \eqref{rhonJ}), we have
  \begin{equation}
    \label{boundrhon}
    \rho_n((\frakl_1,y_1),\dots,(\frakl_n,y_n))\le C_0(\gamma)^n n!
    .
  \end{equation}
  We prove the statement of the lemma for  $C=C(\gamma)=4 C_0(\gamma)$. 

For $F_\frakl := \{z\in\bbR : (\frakl,z)\in F\}$, a union bound over $r\in1{:}M$ and $\frakl\in\bbZ$ reduces the claim to
\begin{equation}\label{eq:bytra}
   \bfP_\gamma(\exists \text{ chain } \fraki_{1:M} : \fraki_r \in Q_F,  S(\fraki_{1:M},y) \le \varepsilon, \frakl(\fraki_r)=\frakl ) \le \Leb(F_\frakl)4^{M-1} \varepsilon^{M-1} C_0(\gamma)^M M!.
\end{equation}
Recall that $\fraki_j=(\frakl_j,\frakk_j)$ with
$|\frakl_j-\frakl_{j-1}|=1,|\frakk_j-\frakk_{j-1}|=1/2$, and that the
$\fraki_j, j=1,\dots,M$ are distinct. Call $\mathbb L_M$ the set of
sequences of integers $\bar\frakl=(\frakl_1,\dots,\frakl_M)$ with
$\frakl_r=\frakl,|\frakl_j-\frakl_{j-1}|=1$. Since $\mathbb L_M$ has $2^{M-1}$
elements, via a union bound it is enough to prove that, for every
$\bar\frakl\in \mathbb L_M$,
\begin{equation}\label{eq:bytra2}
   \bfP_\gamma(\exists \text{ chain } \fraki_{1:M} : \fraki_r \in Q_F,   S(\fraki_{1:M},y) \le \varepsilon, (\frakl(\fraki_1),\ldots,\frakl(\fraki_M)) = \bar\frakl) \le \Leb(F_\frakl)(2\varepsilon)^{M-1} C_0(\gamma)^M M!.
\end{equation}

Let $B\subset \mathbb R^M$ be defined as
  \begin{equation}
    \label{eq:B}
    B=\{y=(y_1,\dots,y_M): y_r\in F_\frakl, |y_j-y_{j-1}|\le \varepsilon,2\le j\le M\}.\end{equation}
  Note that on the event at the l.h.s. of \eqref{eq:bytra2} the particle configuration $\omega$ sampled according to $\bfP_\gamma$ contains $M$ distinct particles located at $(\bar\frakl_j,y_j),j\in1{:}M$ for some $(y_1,\dots,y_M)\in B$. As a  consequence of \eqref{eq:soshni2} and \eqref{boundrhon}, the probability of the latter is upper bounded by
  \begin{align}
    \int_{B}\rho_M((\bar\frakl_1,y_1),\dots,(\bar\frakl_M,y_M))dy_1\dots dy_M\le \Leb(B)C_0(\gamma)^MM!=\Leb(F_\frakl)(2\varepsilon)^{M-1}C_0(\gamma)^MM!.
  \end{align}
  This implies \eqref{eq:bytra2} and thus \eqref{eq:bytra}; summing over $r,\frakl$ and using $\sum_\frakl\Leb(F_\frakl)=|F|$ and $M M!\le M^M$ proves the statement.
\end{proof}

\subsection{A priori estimates on bead increments}\label{sec:priori-estim-bead}
As a key step towards \cref{thm:prob.decay}, we control the increments of a single particle.

\begin{lemma}\label{lem:2} Let $F \subset F_\infty$ be bounded and non-degenerate, $y_0\in D_\infty$ and let  $Y = Y^{(F)}_{y_0}$ be the solution to the truncated problem, \eqref{eq:38}.
For $\fraki \in Q_\infty$, $\delta, \varepsilon > 0$, define
\begin{equation}
A(\delta, \varepsilon, \fraki) \eqdef \Big\{ \sup_{t\in [0,\delta]} |Y_{t}(\fraki) - Y_{0}(\fraki)| \ge \varepsilon\Big \} . \label{eq:75}
\end{equation}
Then,   it holds
$$ \bbP(A(\delta, \varepsilon, \fraki)) \le c |Q_F| \exp (-\varepsilon^{2} / 2\delta) $$
for a universal finite constant $c>0$.
\end{lemma}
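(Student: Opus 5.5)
We plan to use the rewiring representation of \cref{thm:yeqz}, combined with the monotonicity of \cref{thm:monotone}. The difficulty is that the truncated setup $(Q_F,M_F)$ has non-uniform barriers $M^\pm_F(\fraki)$, while the rewiring construction applies only to setups with a uniform lower barrier $M_-$ and no upper barrier. We therefore bound separately the upward excursion $\sup_{t\le\delta}(Y_t(\fraki)-y_0(\fraki))\ge\varepsilon$ and the downward one; by the symmetry $y\mapsto -y$ (which reverses the partial order on $Q_\infty$) it suffices to treat the upward one and double the constant. If $\fraki\notin Q_F$, the particle is frozen and there is nothing to prove, so we assume $\fraki\in Q_F$.

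For the upward excursion we compare $Y$ with a process that dominates it from above. Let $Q'\eqdef\{\frakj\in Q_F:\frakj\le\fraki\}$ be the set of active particles below $\fraki$; this is precisely the set required by condition \eqref{eq:39}. On $Q'$ we choose the boundary parameters $M'^+\equiv+\infty$ and a uniform lower barrier $M'^-\equiv M_-\eqdef y_0(\fraki)$. For the initial condition we take $y_0'(\frakj)\eqdef\max(y_0(\frakj),M_-)$, so that $y'_0\ge\Pi_{Q'}y_0$; this $y'_0$ lies in the new domain since the max of an ordered configuration with a constant stays ordered. The one delicate requirement of \cref{def:agg} is $M'^-\ge M^-_F$. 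Here we use that every $\frakj\le\fraki$ satisfies $y_0(\frakj)\le y_0(\fraki)$, so that $M^-_F(\frakj)\le y_0(\fraki)$. If this inequality fails, we instead take $M_-\eqdef\max_{\frakj\in Q'}M_F^-(\frakj)$. In either case \cref{thm:monotone} gives $Y'_t(\fraki)\ge Y_t(\fraki)$ for all $t$, with $Y'_0(\fraki)=y_0(\fraki)$.

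By \cref{thm:yeqz}, $Y'$ equals in law the rewired process $Z=\sigma\WBar$. At each time $t$, $Z_t(\fraki)$ equals $\WBar_t(\frakj)$ for some $\frakj\in Q'$. Moreover, since $\fraki$ is maximal in $Q'$ and the rewiring preserves order, $Z_t(\fraki)\le\max_{\frakj\in Q'}\WBar_t(\frakj)$. Hence
$$\{\sup_{t\le\delta}Z_t(\fraki)-y_0(\fraki)\ge\varepsilon\}\subset\bigcup_{\frakj\in Q'}\{\sup_{t\le\delta}\WBar_t(\frakj)-y_0(\fraki)\ge\varepsilon\}.$$
Each $\WBar(\frakj)$ is a Brownian motion reflected at $M_-$ and started from $y'_0(\frakj)\le y_0(\fraki)$. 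If we build it via $\WBar(\frakj)=\Gamma_{M_-}(y'_0(\frakj)+W(\frakj))$, then $\WBar_t(\frakj)\le y_0(\fraki)+\sup_{s\le t}(W_t-W_s)$, because $y'_0(\frakj)\ge M_-$ makes the reflection term at most $\sup_{s}(-W_s)^+$ up to the shift. By the reflection principle, $\bbP(\sup_{t\le\delta}\sup_{s\le t}(W_t-W_s)\ge\varepsilon)\le 2e^{-\varepsilon^2/2\delta}$, since $\sup_{s\le t}(W_t-W_s)$ has the law of $|W_t|$ in the process sense (Lévy). A union bound over $|Q'|\le|Q_F|$ indices then gives the stated bound with a universal $c$ (for instance $c=4$).

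The main obstacle is the second paragraph: verifying the hypotheses of \cref{thm:monotone}. This means checking $M'^\pm\ge M^\pm_F$ on $Q'$, where $M'^+=+\infty$ is harmless, and checking that $y'_0$ is an admissible initial condition, while being careful that removing the particles of $Q_F\setminus Q'$ only removes constraints that push $\fraki$ downward. The upper barriers $M^+_F$ and the particles above $\fraki$ only push $\fraki$ down, and this is exactly what the ordering condition \eqref{eq:39} encodes.
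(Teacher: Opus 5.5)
Your strategy is the paper's. You pass to the downward-closed set $Q'=\{\frakj\in Q_F:\frakj\le\fraki\}$ with a uniform lower barrier and no upper barrier, dominate $Y(\fraki)$ via \cref{thm:monotone}, and then bound the top bead through the rewiring identity \cref{thm:yeqz} and a union bound over reflected Brownian motions. Several of your steps are correct: the check $M^-_F(\frakj)\le y_0(\frakj)\le y_0(\fraki)$, the pathwise bound $\WBar_t(\frakj)\le y_0(\fraki)+\sup_{s\le t}(W_t(\frakj)-W_s(\frakj))$, and the use of L\'evy's identity.

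The gap is in your comparison initial condition. With $M_-=y_0(\fraki)$ and $y'_0(\frakj)=\max(y_0(\frakj),M_-)$, every $\frakj\in Q'$ gets $y'_0(\frakj)=y_0(\fraki)$, so all beads of $Q'$ start at one point lying on the barrier. This is the most degenerate multiple point of $\DBar_{\calH'}$, and \cref{thm:yeqz} cannot be invoked there.

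Both the pairwise solution $\YHat$ of \cref{lem:3} and the rewired process $Z$ are built inductively along stopping times $\tau_i$, at each of which a \emph{unique} constraint is met. The non-accumulation of the $\tau_i$ is deduced from \eqref{eq:47}, which \cref{lem:triple} only provides for initial conditions outside $\partial^2\DBar_\calH$. From your start, $\tau_1=0$ with all constraints active. Moreover, independent reflected paths issued from a common point meet infinitely often in every $[0,\epsilon)$, so the $\tau_i$ accumulate at $0$. Hence neither $\YHat$ nor $Z$ is defined by the recipe, and the identity in law is not established for your $Y'$. Your fallback barrier $\max_{\frakj\in Q'}M^-_F(\frakj)$, combined with the same $\max$-clamping, can still place several beads on the barrier at once, so it does not cure this.

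The paper avoids the problem as follows. It takes the barrier $\tilde M_F=\max_{\frakj\in Q'}M^-_F(\frakj)<y_0(\fraki)$ and the contracted initial condition $\yTil_0=y_0(\fraki)-\alpha\,(y_0(\fraki)-\Pi_{Q'}y_0)$ with $\alpha\in(0,1)$ small. This still satisfies $\yTil_0\ge\Pi_{Q'}y_0$ and $\yTil_0(\fraki)=y_0(\fraki)\ge\yTil_0(\frakj)$. It is also strictly ordered and strictly above the barrier, hence interior, where \eqref{eq:47} and \cref{thm:yeqz} apply.

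Alternatively, you could approximate your degenerate start by interior configurations in which $\fraki$ is still the highest bead. You would then pass to the limit using continuity of the Skorokhod map (\cref{thm:tanaka}), with the same driving noise, on the events $\{\sup_{t\le\delta}(Y'_t(\fraki)-y_0(\fraki))>\varepsilon'\}$ for $\varepsilon'<\varepsilon$. Either way, some such step is needed before the rewiring bound can be used.
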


The strategy for the proof of \cref{lem:2} is to couple the system $Y$ with a system $\YTil$ that monotonically dominates it 
and for which we can take advantage of the identity in law given by \cref{thm:yeqz}. 
In short, in order to control, say, the upward increments of $Y(\fraki)$, we define a process $\YTil$ from $Y$ as illustrated in Figure~\ref{fig:QL},
i.e.  by
\begin{enumerate}
\item removing all of the beads that ``push the $\fraki$-particle down'', for $\fraki \in Q_\infty$, leaving $\fraki$ as the ``top bead'';
\item bunching the beads below $\fraki \in Q_\infty$ vertically upwards, so that they all share one boundary condition from below.
\end{enumerate}
In the perspective of the rewiring solution of \cref{sec:rewiring-solution}, the reason for (1) is clear: 
the position of top bead is given by the supremum of the driving paths $\WBar$. 
The second point is required as the construction of \cref{sec:rewiring-solution} forces us to consider a unified boundary condition.

\begin{proof}
We have $A(\delta, \varepsilon, \fraki) = A_+(\delta, \varepsilon, \fraki) \cup A_-(\delta, \varepsilon, \fraki)$ in which
\begin{align}
A_+(\delta, \varepsilon, \fraki) \eqdef \Big\{ \sup_{t \in [0, \delta]} (Y_{t}(\fraki) - Y_{0}(\fraki)) \ge \varepsilon \Big\}, \quad A_-(\delta, \varepsilon, \fraki) \eqdef \Big\{ \inf_{t \in [0, \delta]} (Y_{t}(\fraki) - Y_{0}(\fraki)) \le - \varepsilon \Big\}\,.
\end{align}
By symmetry, it is enough to prove the bound for $A_+(\delta, \varepsilon, \fraki)$.
Furthermore, if $\fraki \notin Q_F$, then the event $A_+$ does not occur,
since the particle with label $\fraki$ does not evolve. Therefore it is enough to show
$$ \bbP(A_+(\delta, \varepsilon, \fraki)) \lesssim |Q_F| \exp (-\varepsilon^{2} / 2\delta), \quad \text{uniformly in} \quad
\fraki \in Q_F\,. $$

We now define $\YTil$ 
as a $\bbR^{\QTil_F}$-valued reflecting diffusion, in which the active labels $\QTil_F \subset Q_F$ are given by
\begin{equation}\label{eq:69}
\QTil_F \eqdef \{ \frakj \in Q_F : \frakj \le \fraki \} .
\end{equation}
There are many ways to monotonically disfigure $Y$ so to produce an appropriate $\YTil$ 
that fits the description given before the proof. For concreteness, we make a specific choice.
Define the barrier $\tilde M_F \in \bbR$ according to
$$ \tilde M_F \eqdef \max \{ M^-_F(\frakj) : \frakj \in \QTil_F \}.$$
We have $\tilde M_F < y_0(\fraki)$, since $y_0\in D_\infty$ and no bead lies on $\partial F$. The initial condition $\yTil_0 \in \bbR^{\QTil_F}$ is obtained from $\Pi_{\QTil_F} y_0$ by bunching the beads upwards, according to the following linear contraction
$$ \yTil_0 = y_0(\fraki) - \alpha (y_0(\fraki) - \Pi_{\QTil_F} y_0) $$
for $\alpha \in (0, 1)$ small enough so that $\yTil_0(\frakj) > \tilde M_F$ for all $\frakj \in \QTil_F$.
See \cref{fig:QL} for a graphic of how $\yTil_0$ is obtained from $y_0$ in the case $Y = Y^L$.

\begin{figure}[h]
    \centering
    \begin{minipage}{0.48\textwidth}
        \centering
        \includegraphics[width=\linewidth]{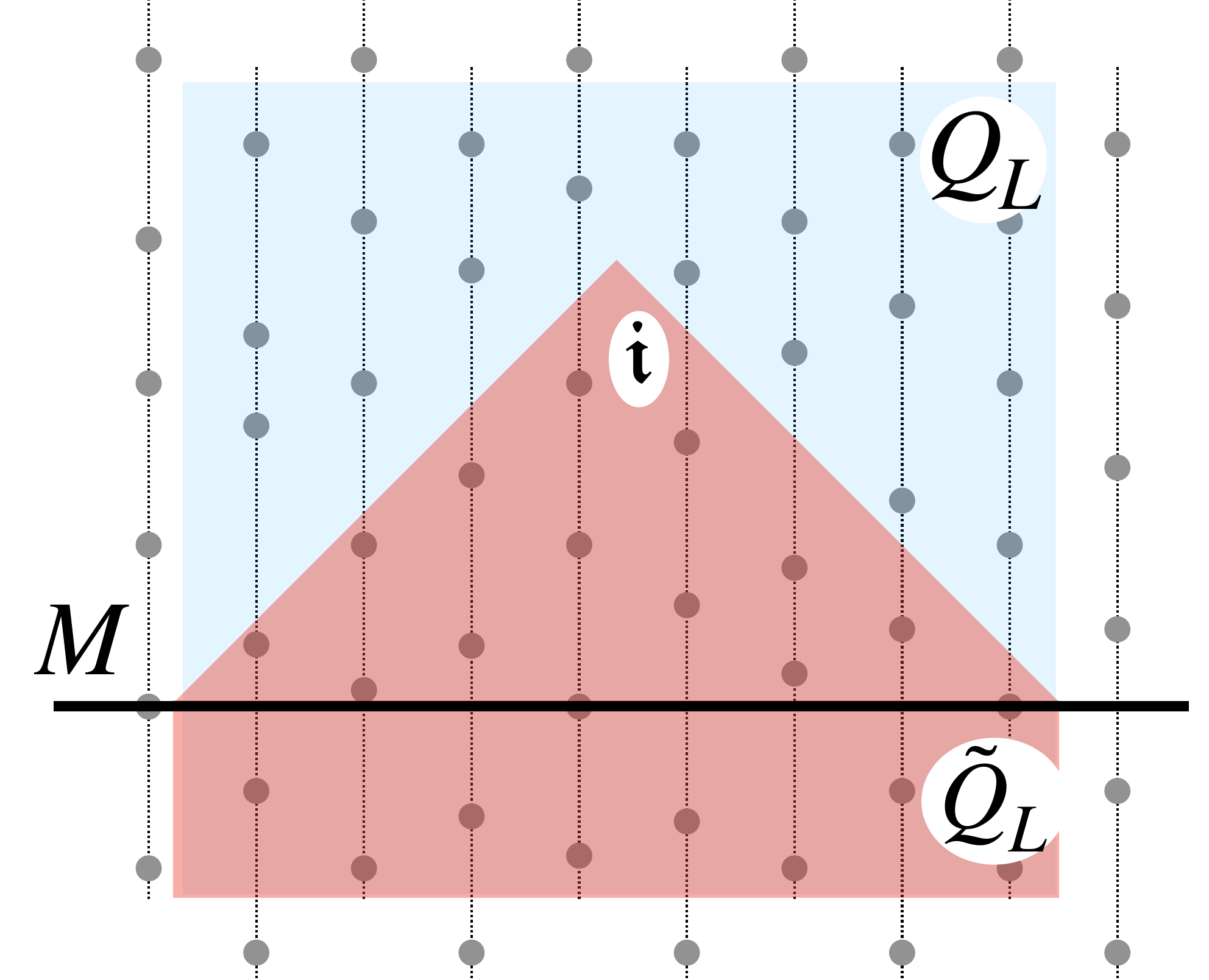}
    \end{minipage}
    \hfill
    \begin{minipage}{0.48\textwidth}
        \centering
        \includegraphics[width=\linewidth]{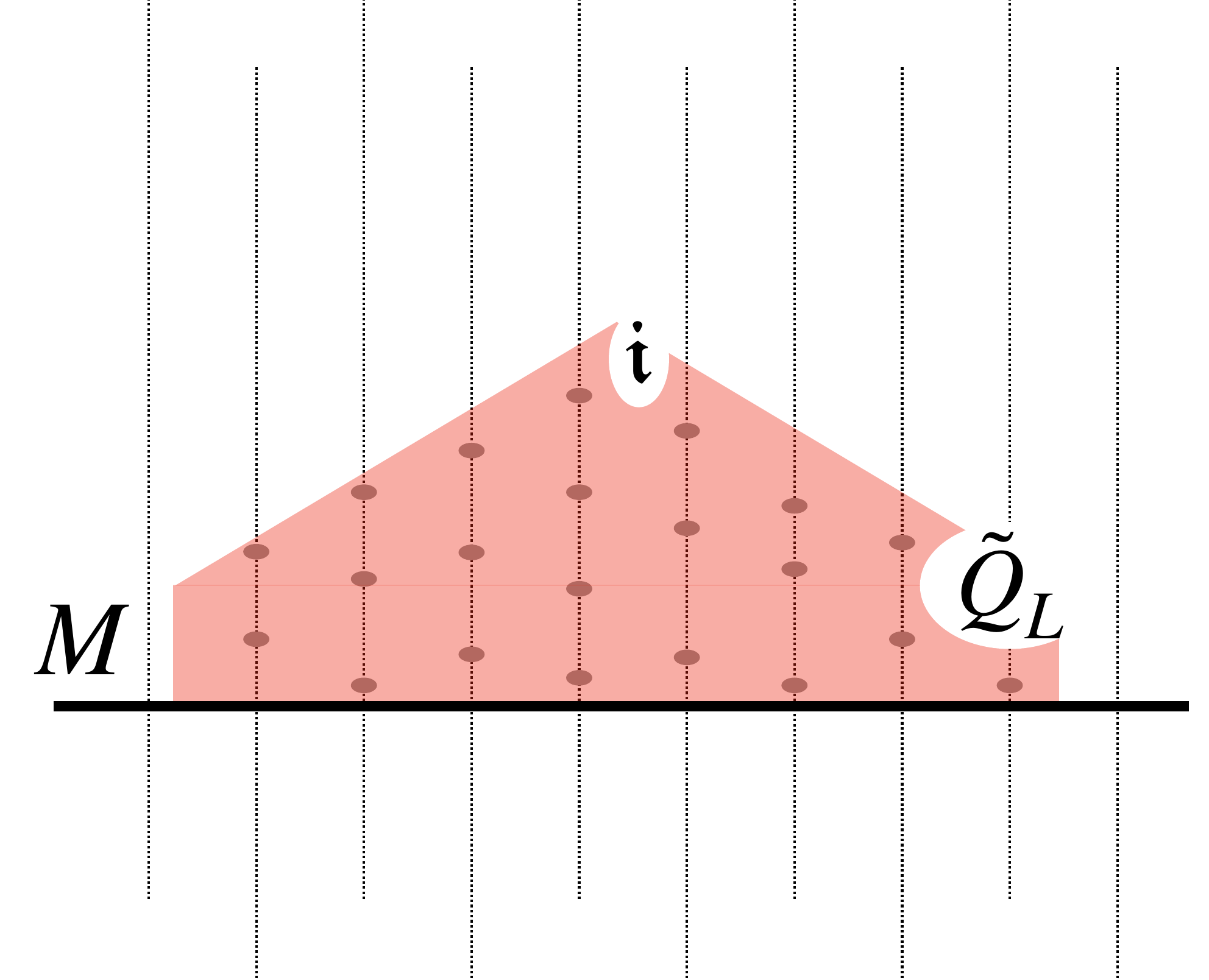}
    \end{minipage}
    \caption{For $Y = Y^L$, the configuration of beads $y_0$ (left) gives rise to the configuration $\yTil_0$ (right). The position of the bead with label $\fraki \in Q_L$ is kept fixed.}\label{fig:QL}
\end{figure}

The process $\YTil$ is the solution of the single boundary problem introduced in \cref{sec:rewiring-solution}, 
with setup $(Q,M)=(\QTil_F, \tilde M_F)$ and using the same noise $W$ as in the definition of $Y$.

Since $\yTil_0 \ge \Pi_{\QTil_F}y_0$, the monotonicity property stated in \cref{thm:monotone} ensures that
$\YTil \ge \Pi_{\QTil_F}Y$ over the whole path. In particular, using also $Y_0(\fraki) = \YTil_0(\fraki)$, we have
\begin{align}\label{eq:76}
A_+(\delta, \varepsilon, \fraki) \subseteq \ATil_+(\delta, \varepsilon, \fraki), \eqdef \Big\{ \sup_{t \in [0, \delta]} (\YTil_{t}(\fraki) - \YTil_{0}(\fraki)) \ge \varepsilon \Big\}\,.
\end{align}

Let $Z$ be the rewiring solution defined in \cref{sec:definition-rewiring} with the initial state $Z_0 = \yTil_0$, and with $Q = \QTil_F$.
By the identity in law, \cref{thm:yeqz}, we have 
\begin{align}
\bbP\big(\ATil_+(\delta, \varepsilon, \fraki)\big)&=\bbP\Big( \sup_{t \in [0, \delta]} (Z_{t}(\fraki) - Z_{0}(\fraki)) \ge \varepsilon \Big)\\
&\leq \bbP\Big( \sup_{\frakj \in \QTil_F} \sup_{t \in [0, \delta]} (\WBar_{t}(\frakj) - \WBar_{0}(\frakj)) \ge \varepsilon \Big) \lesssim |Q_F| \exp (-\varepsilon^{2} / 2\delta)
\end{align}
where the second step is a consequence of the very construction of $Z$ which ensures that $Z_t(\fraki) = \WBar_t(\sigma_t \fraki)$ 
and the fact that we have the initial ordering $\WBar_{0}(\fraki) \ge \WBar_{0}(\frakj)$, while the last 
follows from a union bound over $\frakj \in \QTil_F$, noting that $|\QTil_F| \le |Q_F|$, and then using a simple bound on the tail behaviour of
(the supremum of) a single reflected Brownian Motion, $\WBar(\frakj)$. 
\end{proof}

\subsubsection{Control of collision chains}\label{sec:contr-coll-chains}
In this section we use \cref{lem:2} to control the number of collisions seen by the system over a small time interval $[0, \delta]$. 
To lighten the notation, we write $Y = Y^{(F)}$ for the solution of the truncated problem \eqref{eq:38} in a bounded, non-degenerate region $F \subset F_\infty$, with $y_0 \sim \bfP_\gamma$.

Recall the notion of chain in \cref{def:chain}. We say that a chain $\fraki_{1:M}$ along with a sequence of times $0 \le t_1 \le \ldots \le t_{M-1} \le T$ is a \textit{collision chain} (for $Y$ and over the time interval $[0, T]$) if
$$  Y_{t_i}(\fraki_i) = Y_{t_i}(\fraki_{i+1}) , \, \forall i \in 1{:}M-1. $$
Note that necessarily, at least one among $\fraki_i,\fraki_{i+1}$ belongs to $Q_F$,
since particles with index outside $Q_F$ do not move, and $y_0\in D_\infty$.

\begin{theorem}\label{thm:collisions} For $M \in \bbN$ and $\delta > 0$, define the event 
\begin{equation}\label{eq:103}
B(\delta, M) \eqdef \{ \exists \text{ a collision chain of size } M \text{ over the time interval } [0, \delta] \}.
\end{equation}
For every $\gamma\in(-1,1)$, there exists $C=C(\gamma) > 0$ such that for all $\delta > 0$, $M \in \bbN$ satisfying $\delta M^2 (M-1) > 2$,
\begin{equation}\label{eq:80}
\bbP_\gamma(B(\delta, M)) \le C |F|(1+|F|) e^{- 1/(C\delta M^2)}.
\end{equation}
\end{theorem}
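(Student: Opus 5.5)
The plan is to split the event $B(\delta,M)$ according to whether the initial configuration $y_0$ contains a chain with small gaps, or some particle moves a lot during $[0,\delta]$. Fix $\varepsilon>0$, to be chosen as a function of $\delta$ and $M$. Suppose a collision chain $\fraki_{1:M}$ with times $t_1\le\dots\le t_{M-1}\le\delta$ exists. We can always discard the inactive endpoints: at least one of $\fraki_i,\fraki_{i+1}$ lies in $Q_F$, so the chain meets $Q_F$. For each $i$,
\begin{equation}
|y_0(\fraki_i)-y_0(\fraki_{i+1})| \le |Y_{t_i}(\fraki_i)-y_0(\fraki_i)| + |Y_{t_i}(\fraki_{i+1})-y_0(\fraki_{i+1})|.
\end{equation}
Hence either $S(\fraki_{1:M},y_0)\le\varepsilon$, or some particle $\fraki\in Q_F$ has $\sup_{t\le\delta}|Y_t(\fraki)-y_0(\fraki)|\ge\varepsilon/2$. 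In the second case the event $A(\delta,\varepsilon/2,\fraki)$ of \cref{lem:2} occurs for some $\fraki\in Q_F$.

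For the first alternative I will use \cref{lem:1}, which gives probability at most $|F|\varepsilon^{M-1}(MC)^M$. For the second, a union bound over $\fraki\in Q_F$ together with \cref{lem:2} gives at most $c\,|Q_F|^2 e^{-\varepsilon^2/(8\delta)}$ conditionally on $y_0$. Taking $\bfE_\gamma$ and using $\bfE_\gamma|Q_F|^2\lesssim |F|(1+|F|)$, I get
\begin{equation}
\bbP_\gamma(B(\delta,M)) \le |F|\,\varepsilon^{M-1}(MC)^M + c\,|F|(1+|F|)\,e^{-\varepsilon^2/(8\delta)}.
\end{equation}
The moment bound on $|Q_F|$ holds since $\bbE|Q_F|^2$ is controlled by the one- and two-point functions, which are bounded: $\rho_1=1/\pi$ and $\rho_2\le 2C_0^2$ by \eqref{boundrhon}. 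This is the same argument as \eqref{eq:MomQL}, now for general $F$.

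The remaining step is to choose $\varepsilon$ so that both terms are $\lesssim e^{-1/(C\delta M^2)}$. Take $\varepsilon = \kappa/M$ for a small constant $\kappa$, say $\kappa=(2C)^{-1}$ up to adjustment, and absorb $C$ into the constant. Then $\varepsilon^{M-1}(MC)^M = MC\,(\kappa C)^{M-1}\le MC\,2^{-(M-1)}$, and the Gaussian term is $e^{-\kappa^2/(8\delta M^2)}$, which already has the claimed form. The hypothesis $\delta M^2(M-1)>2$ is exactly what converts the geometric term: it gives $M-1> 2/(\delta M^2)$, so $2^{-(M-1)}\le e^{-(\log 2)\cdot 2/(\delta M^2)}$. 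The polynomial prefactor $M$ is then absorbed by enlarging $C$, using $M\le e^{M/2}$ and splitting the exponent in half. Combining the two terms and enlarging $C$ once more yields \eqref{eq:80}.

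The main obstacle I expect is not the probabilistic estimates, which are supplied by \cref{lem:1,lem:2}, but this bookkeeping. The chain must be shown to intersect $Q_F$ so that \cref{lem:1} applies. The factor $|Q_F|$ in \cref{lem:2} must not be too large, which is why \cref{lem:2} is used only after conditioning on $y_0$, with its bound integrated against $|Q_F|$ rather than estimated in terms of the random $Q_F$. Finally, $\kappa$ must be tuned so that $(M\varepsilon C)^{M}$ decays geometrically while $\varepsilon^2/\delta$ remains of order $1/(\delta M^2)$.
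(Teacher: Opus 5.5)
Your proposal is correct and follows the paper's proof essentially line by line. You use the same split into the event $A'=\bigcup_{\fraki\in Q_F}A(\delta,\varepsilon/2,\fraki)$, handled by \cref{lem:2} conditionally on $y_0$ together with $\bfE_\gamma|Q_F|^2\lesssim|F|(1+|F|)$, and its complement, handled by \cref{lem:1} via the triangle-inequality inclusion. The only difference is the choice of $\varepsilon$: the paper takes $\varepsilon=(CM)^{-M/(M-1)}\bigl(1-\frac{1}{\delta M^2(M-1)}\bigr)$ so that $[MC]^M\varepsilon^{M-1}\le e^{-1/(\delta M^2)}$ directly, whereas your simpler $\varepsilon\asymp 1/M$ gives a geometric factor $M2^{-(M-1)}$ that the hypothesis $M-1>2/(\delta M^2)$ converts into the same form, and this works equally well.
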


\begin{proof} In the proof, $C(\gamma)$ will denote a positive constant (not necessarily the same at each occurrence) that depends only on $\gamma\in(-1,1)$. We begin by controlling the increments of the beads. Fix a parameter $\varepsilon > 0$ that shall be optimised later, and set
$$ A' \eqdef \bigcup_{\fraki \in Q_F} A(\delta, \varepsilon/2, \fraki) $$
the events at the r.h.s. being as in \eqref{eq:75}. We consider a decomposition
\begin{align}
\bbP_\gamma(B(\delta, M)) &\le \bbP_\gamma(A') + \bbP_\gamma(B(\delta, M)\cap (A')^\complement) \\
&\lesssim_\gamma |F|(1+|F|) \exp (-\varepsilon^{2} / 8\delta) + \bbP_\gamma(B(\delta, M)\cap (A')^\complement) \label{eq:106}
\end{align}
where in the last line we employed a union bound conditional on $y_0$, \cref{lem:2} and \eqref{eq:soshni} with \eqref{boundrhon}, which gives $\bfE_\gamma[|Q_F|^2] \lesssim_\gamma |F|(1+|F|)$.

Let $S(\fraki_{1:M},y)$ be as in \eqref{eq:1}, then we claim that
\begin{equation}\label{eq:5}
B(\delta, M)\cap (A')^\complement\subset\big\{ \exists \text{ chain } \fraki_{1:M}, \, \{\fraki_1,\ldots,\fraki_M\} \cap Q_F \ne \emptyset, S(\fraki_{1:M}, y_0) \le \varepsilon\big\}\,.
\end{equation}
Indeed, if for all chains $\fraki_{1:M}$ satisfying $\{\fraki_1,\ldots,\fraki_M\} \cap Q_F \ne \emptyset$, it holds that $S(\fraki_{1:M}, y_0) > \varepsilon$,
and there has to be a collision for the chain $\fraki_{1:M}$ over the interval $[0, \delta]$, 
then at least one of the beads must travel a distance of $\varepsilon / 2$ in that time. In other words, $A'$ must hold. This implies \eqref{eq:5}. 

Invoking \eqref{eq:5} and \cref{lem:1}, we obtain
\begin{equation}
\bbP_\gamma( B(\delta, M)\cap (A')^\complement) \le |F| \varepsilon^{M-1}[ M C(\gamma)]^M.
\end{equation}
which, once plugged into \eqref{eq:106}, gives
$$ \bbP_\gamma(B(\delta, M)) \le C(\gamma) |F|(1+|F|) \Big( \exp (-\varepsilon^{2} / 8\delta) + [M C(\gamma)]^M \varepsilon^{M-1} \Big)\,. $$
At this point, we optimise $\varepsilon$. Since we assumed $\delta M^2 (M-1) > 2>1$, we may choose, with $C = C(\gamma)$,
$$ \varepsilon = \Big( \frac{1}{CM} \Big)^{M/(M-1)} \Big(1- \frac{1}{\delta M^2 (M-1)} \Big) $$
Using the bound $(1-t)^n \le e^{-nt}$, we deduce $[M C]^M \varepsilon^{M-1} \le \exp(-1/\delta M^2)$. From the definition of $\varepsilon$ and $\delta M^2 (M-1) > 2$  it is not hard to see that $\varepsilon^2\gtrsim_\gamma 1/M^2$. As a consequence, 
$\exp (-\varepsilon^{2} / 8\delta) \le \exp(-1/(C\delta M^2))$ for a modified constant $C = C(\gamma) > 0$,
and thus the statement follows at once. 
\end{proof}

\subsection{Percolation argument}
The goal of this Section is to prove \cref{thm:prob.decay,thm:annulus}. As they will be proved simultaneously, 
let us first introduce a notation that encompasses them both. 
The statement compares two processes $\eta$ and $\eta'$ which have active regions $F$ and $F'$ respectively, 
with $F \subset F'$, or in terms of labels, $Q \subset Q'$. 
In detail, writing $\one$ for \cref{thm:prob.decay} and $\two$ for \cref{thm:annulus}, we set
\begin{align}
(F,F',G) &\eqdef
\begin{cases}
(x_0+F_L,F_{L'},x_0+F_{L/2}) &\one\,, \\
(\FTil_L,F_L,G_L) &\two\,,
\end{cases}\\
(\eta,\eta',Q,Q',Y,Y') &\eqdef (\eta^{(F)},\eta^{(F')},Q_F,Q_{F'},Y^{(F)},Y^{(F')}).
\end{align}
in which $\FTil_L$ is the region defined in \eqref{e:InOutBox}. In addition, in the case of $\two$, $L' = L$. Thus, both statements are proven provided we show that 
in either case, we have 
$$ \bbP_\gamma( ( \eta_t\restr_G )_{t \in [0, T]} \ne ( \eta'_t\restr_G )_{t \in [0, T]} ) \lesssim_T e^{ -(\log L)^2/c} + (L')^4 e^{- c L^2} + L^{\theta} (L')^4 e^{-C(\gamma, T) L^{3\theta - 2}}\,. $$

The strategy is as follows: any disagreement between $\eta$ and $\eta'$ within $G$ can be traced via an $\eta'$-collision chain
to the region $F' \setminus F$ which is active for $\eta'$ but not for $\eta$. Since this collision chain crosses a gap of order $L$, 
we argue that its size is of order $L$ (up to logarithmic corrections). 
Finally, \cref{thm:collisions} implies that collision chains of this size are very unlikely.

In order to form the collision chain, we introduce the infection process $(V_t)_{t \ge 0}$, 
i.e. a monotonically increasing $\{0, 1\}^{Q_\infty}$-valued process, 
in which the initially infected particles are those that lie in $F'\setminus F$. Particles become infected either if they collide with an infected particle, or if they exit the box $F$. The evolution of the infection state is determined by the dynamics of $\eta'$ alone.

In detail, define a sequence of stopping times $\tau_0 < \tau_1 < \ldots $ along with the infection process $V$, which is constant on each interval $[\tau_{n-1}, \tau_n)$, as follows. Set $(V^0_t)_{t \ge 0}$ to be the constant process $V^0 \eqdef \ic\{ \fraki \in Q'\setminus Q \}$ and set $\tau_0 = 0$. For $n \in \bbN$ inductively define 
\begin{align}
\tau_n^{1} &\eqdef \inf \{ t \ge \tau_{n-1}  : \exists \text{ ordered } \fraki, \frakj \in Q', \, \fraki \sim \frakj, \, V_{t}^{n-1}(\fraki) = 0, \, V_{t}^{n-1}(\frakj) = 1, Y'_t(\fraki) = Y'_t(\frakj)\} \\
\tau_n^{2} &\eqdef \inf \{ t \ge \tau_{n-1} : (\frakl(\fraki), Y'_t(\fraki)) \notin F \text{ for some } \fraki \in Q', \, V_{t}^{n-1}(\fraki) = 0  \} \\
\tau_n &\eqdef \min( \tau_n^{1}, \tau_n^{2} ), \qquad\text{and}\qquad V_t \eqdef V^{n-1}_t, \quad t \in [\tau_{n-1}, \tau_n).
\end{align}
\begin{equation}\label{eq:93}
\fraki_n = 
\begin{cases}
\text{the unique } \fraki \in Q' : V^{n-1}_{\tau_{n}}(\fraki) = 0, \, \exists \frakj \in Q', \, \frakj \sim \fraki, \, V^{n-1}_{\tau_{n}}(\frakj) = 1, \, Y'_{\tau_n}(\frakj) = Y'_{\tau_n}(\fraki) \quad &\text{ if } \tau_n = \tau_n^1 \\
\text{the unique } \fraki \in Q' : V^{n-1}_{\tau_{n}}(\fraki) = 0, \, (\frakl(\fraki), Y'_{\tau_n+}(\fraki)) \in \partial F \quad &\text{ if } \tau_n = \tau_n^2
\end{cases}
\end{equation}
and define
$$ V^n_t(\fraki) := V^{n-1}_t(\fraki) \, \vee \, \ic\{ \fraki = \fraki_n, t \ge \tau_n \}.$$
We begin with the following intuitive fact, that if a particle $\fraki \in Q$ is uninfected, then its location is the same for both $\eta$ and $\eta'$. The proposition follows from the uniqueness of the Skorokhod map, and we omit the proof. 
\begin{proposition}\label{prp:8}
If $\fraki \in Q_\infty$, $t \ge 0$ are such that $V_t(\fraki) = 0$, then it holds $Y_t(\fraki) = Y'_t(\fraki)$.
\end{proposition}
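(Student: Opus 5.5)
We argue pathwise, for a fixed realisation of the initial condition $y_0\in D_\infty$ and of the driving noise $W$, by induction on the infection epochs $[\tau_{n-1},\tau_n)$. The aim is a stronger claim. On each such interval, let $U_{n-1}\eqdef\{\fraki\in Q' : V^{n-1}(\fraki)=0\}$ be the uninfected labels, which is a subset of $Q$ because $V^0=\ic\{Q'\setminus Q\}$ and $V$ is non-decreasing. We show that $\Pi_{U_{n-1}}Y$ and $\Pi_{U_{n-1}}Y'$ coincide on $[\tau_{n-1},\tau_n]$. Labels outside $Q'$ are frozen for both processes, so they agree trivially.

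The key step is to identify both restrictions as the solution of one and the same finite-dimensional Skorokhod problem on $[\tau_{n-1},\tau_n)$. Consider the setup whose active labels are $U_{n-1}$, and whose constraints are the following.
\begin{enumerate}
\item[(a)] The pairwise constraints $H_\frakp$ between neighbouring labels in $U_{n-1}$.
\item[(b)] Constraints against neighbours $\frakj\notin U_{n-1}$. By the definition of $\tau_n^1$, these are never met by $Y'$ before $\tau_n$.
\item[(c)] The boundary constraints $M^\pm_F$ coming from $F$. By the definition of $\tau_n^2$, these are never met by $Y'$ before $\tau_n$ for uninfected labels.
\end{enumerate}
For $Y'$ restricted to $U_{n-1}$, the local times $L^H$ with $H$ of type (b) or (c) therefore vanish on $[\tau_{n-1},\tau_n)$. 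Hence $\Pi_{U_{n-1}}Y'$ solves the reflected SDE \eqref{eq:38} driven by $\Pi_{U_{n-1}}W$ with reflections only of type (a).

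For $Y$ we argue as in the proof of \cref{lem:3}. Let $\hat\tau$ be the first time in $[\tau_{n-1},\tau_n)$ at which $\Pi_{U_{n-1}}Y$ meets a constraint of type (b) or (c), either against a frozen or infected particle, or against the boundary of $F$. Up to $\hat\tau$, the process $\Pi_{U_{n-1}}Y$ also solves the problem with type (a) reflections only, started from the same point; at time $\tau_{n-1}$ the two processes agree by the inductive hypothesis. Uniqueness of the Skorokhod map (\cref{lem:triple}, \cref{thm:tanaka}) then gives $\Pi_{U_{n-1}}Y=\Pi_{U_{n-1}}Y'$ up to $\hat\tau$.

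We then show $\hat\tau\ge\tau_n$. This step is the delicate part. Suppose $\hat\tau<\tau_n$. Then at time $\hat\tau$ a particle $\fraki\in U_{n-1}$ with $Y_{\hat\tau}(\fraki)=Y'_{\hat\tau}(\fraki)$ touches either a neighbour $\frakj\notin U_{n-1}$ or $\partial F$. The boundary case is excluded directly by the definition of $\tau^2_n$, since $Y'$ is at the same point. The neighbour case requires care, because $Y(\frakj)$ and $Y'(\frakj)$ may differ for infected $\frakj$. For $\frakj\notin Q'$ both are frozen and equal, and $\tau^1_n$ excludes the collision. For $\frakj$ infected, we must make sure that a collision of $Y(\fraki)$ with $Y(\frakj)$ does not make $Y(\fraki)$ deviate from $Y'(\fraki)$. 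If the definition of the infection process (infection at any $Y'$-collision with an infected neighbour) does not already cover this case, we would symmetrise it by also declaring infection at $Y$-collisions. Alternatively, we would use the monotone coupling \cref{thm:monotone} to sandwich the two processes, and the no-multiple-point property \eqref{eq:47} to show that at $\hat\tau$ the configuration is the one forbidden by $\tau_n^1$. Either route closes the induction.

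Finally, $\tau_n\to\infty$ almost surely, because infection events are collisions or exits of finitely many particles and there are no accumulating collision times. This gives the claim for all $t\ge0$.
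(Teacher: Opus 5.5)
Your architecture (induction over the epochs $[\tau_{n-1},\tau_n)$, restriction to the uninfected coordinates, uniqueness of the Skorokhod map) is what the paper's omitted argument has in mind. However, the step you flag yourself, $\hat\tau\ge\tau_n$ when the obstacle is an infected neighbour, is never proved. For the infection process as actually defined, driven by $Y'$ alone, it is false.

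Here is a scenario. In the setting of \cref{thm:prob.decay}, take $x_0=0$ and $L'>L$. Take a label $\fraki\in Q$ on the last line $\frakl(\fraki)=L$ of $F=F_L$, and its neighbour $\frakj\sim\fraki$, $\fraki\le\frakj$, on line $L+1$, with $y_0(\fraki),y_0(\frakj)$ well inside $(-L,L)$.
\begin{itemize}
\item Then $\frakj\in Q'\setminus Q$ is infected at time $0$.
\item In $Y$ the label $\frakj$ is frozen, so $M^+_F(\fraki)\le y_0(\frakj)$ and $Y_t(\fraki)\le y_0(\frakj)$ for all $t$. In $Y'$ it moves.
\item Consider the following event of positive probability: put the finitely many driving paths in thin tubes around a prescribed scenario, and use continuity of the Skorokhod map. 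In this scenario $Y'(\frakj)$ first moves up. Afterwards $Y'(\fraki)=Y(\fraki)$ climbs to the level $y_0(\frakj)$ and slightly beyond, staying strictly below $Y'(\frakj)$, inside $F$, and without touching any other neighbour.
\item From then on $Y(\fraki)$ is held below the frozen wall while $Y'(\fraki)$ is not, so $Y_t(\fraki)<Y'_t(\fraki)$ shortly afterwards.
\item Yet $\tau^1_n$ has not fired, since there is no $Y'$-contact between $\fraki$ and an infected label. Neither has $\tau^2_n$. So $V_t(\fraki)=0$.
\end{itemize}
The same mechanism operates for any later-infected $\frakj$ once $Y(\frakj)\ne Y'(\frakj)$.

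Consequently your second route cannot work. Independently of this, \cref{thm:monotone} does not compare $(Q_F,M_F)$ with $(Q_{F'},M_{F'})$. The set $Q_F$ is not closed downwards inside $Q_{F'}$ as \eqref{eq:39} requires. Moreover $M^+_F\le M^+_{F'}$ while $M^-_F\ge M^-_{F'}$, so neither \eqref{eq:70} nor its mirror image holds.

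Your first route is the genuine repair. Declare $\fraki$ infected also when $Y_t(\fraki)=Y_t(\frakj)$ for an infected neighbour $\frakj$; for $\frakj\in Q'\setminus Q$ this means $Y(\fraki)$ reaching $y_0(\frakj)$. With that rule, both $\Pi_{U_{n-1}}Y$ and $\Pi_{U_{n-1}}Y'$ feel only constraints common to the two processes on $[\tau_{n-1},\tau_n]$, and your Skorokhod-uniqueness induction closes. But you are then proving the proposition for a modified $V$, and you should say so. Note also that the collision chains which the proof of \cref{thm:prob.decay} extracts from $V$ would then mix $Y$- and $Y'$-collisions.

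A smaller inaccuracy: $\tau^1_n$ does not rule out contacts with neighbours $\frakj\notin Q'$, because those have $V(\frakj)=0$. $Y'$ does reflect off them, e.g.\ near $\partial F_L$ in the setting of \cref{thm:annulus}. Likewise $Y'$ can touch the parts of $\partial F$ that lie on $\partial F'$. These walls are the same for $Y$ and $Y'$, so they belong in your class (a), not in (b) and (c). This part is easily fixed.
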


Now we are ready to prove \cref{thm:prob.decay,thm:annulus}.

\begin{proof}[Proof of \cref{thm:prob.decay,thm:annulus}]
Suppose there exists $t \in [0, T]$ for which $\eta_t\restr_{G} \ne \eta'_t\restr_{G}$, then there exists $\fraki_0 \in Q_\infty$ 
for which $Y_t(\fraki_0) \ne Y'_{t}(\fraki_0)$ and $Y^{\sharp}_{t}(\fraki_0) \in G$ for $Y^{\sharp}=Y$ or $Y'$. 
In particular, by \cref{prp:8}, it holds $V_{t}(\fraki_0) = 1$.

We now construct a collision chain by following this infected particle $\fraki_0$ backwards in time to a source on the boundary. 
Recall that a collision chain, as defined in \cref{sec:contr-coll-chains} is a chain of distinct neighbouring labels 
$\fraki_0 \sim \ldots \sim \fraki_n$, along with a sequence of collision times, 
that in this proof we shall index in reverse: $t_0 \ge \ldots \ge t_{n-1} \ge 0$. 
The chain is defined inductively as follows. 
Given a bead $\fraki_{i}$ that is infected at some time $t \in [0, T]$, let $t_i$ be the first infection time
$$ t_{i} \eqdef \inf \{ t \in[0,T] : V_t(\fraki_{i}) = 1\}. $$
The infection at time $t_i$ is triggered by one of the following three reasons: 
$t_i = 0$ and $\fraki_{i} \notin Q$; 
$t_i > 0$ and $(\frakl(\fraki_i), Y_{t_{i}}(\fraki_i)) \in \partial F$; or there exists neighbouring $\frakj \sim \fraki$ with $V_{t_{i}-}(\frakj) = 1$ and $Y_{t_{i}}(\frakj) = Y_{t_{i}}(\fraki_i)$. 
In case one and two, terminate the chain, excluding the time $t_i$, for it is not a collision time. 
In case three, we choose $\fraki_{i+1} \eqdef \frakj$ to be the unique such label, and iterate.
The procedure terminates eventually, since there are no repeats in the chain. 

Thus, we have constructed a collision chain $\fraki_{1{:}N}$ with collision times $t_{1{:}N-1}$, 
where the random variable $N \in \bbN$ denotes the length of this specific chain.
We claim that the constructed collision chain is of order $L$, up to logarithmic corrections. 
Specifically, we claim that there exists $L_0\in\bbN$ such that for every $L > L_0$ 
\begin{equation}\label{eq:98}
\bbP_\gamma( \{N < L / (\log L)^2\} \cap A) \lesssim 
e^{ -(\log L)^2/c} + (L')^4 e^{- c L^2}
\end{equation}
where $A$ is the event 
\begin{equation}\label{eq:58}
A = \{|Q| \le (2L+1)^2\}\cap \{Q' \subset P_{L'}\}\,.
\end{equation}
Assuming \eqref{eq:98} (whose proof is given at the end of the section), we deduce 
\begin{align}\label{eq:104}
\bbP_\gamma\big( ( \eta_t\restr_{G} )_{t \in [0, T]} &\ne ( \eta'_t\restr_{G} )_{t \in [0, T]} \big)  \\
&\le \bbP_\gamma\big( A^\complement \big) + \bbP_\gamma\big( \{N < L / (\log L)^2\}\cap A \big) + \bbP_\gamma\big( \{N \ge L / (\log L)^2\} \cap A \big).
\end{align}
Therefore, using \eqref{eq:98} and \cref{prp:11}, 
the theorem follows upon proving
\begin{equation}\label{eq:102}
\bbP_\gamma\big( \{N \ge L / (\log L)^2\} \cap A \big) \le C(\gamma, T) L^{\theta} (L')^4 e^{-C(\gamma, T) L^{3\theta - 2}}\,.
\end{equation}
For this, we fix a parameter $\delta > 0$ that shall be later optimised, 
and consider the mesh $\calT = \{0, \delta, 2\delta, \ldots \} \subset [0, T]$. 
For there to exist a collision chain of size $N \ge L / (\log L)^2$ on the interval $[0, T]$, 
by the pigeon-hole principle, there must exist $t \in \calT$ such that the interval $[t, t+\delta]$ 
contains a collision chain of size $M \ge \delta L / T (\log L)^2$. 
In particular, it must contain a collision chain of size exactly $M \eqdef \lfloor\delta L / T (\log L)^2\rfloor$ (in the following, for lighness, we drop the integer parts).

In other words, defining $B(\delta, M, t)$ as in as in \eqref{eq:103} but for the shifted time interval $[t, t+\delta]$ rather than $[0, \delta]$, and with respect to the process $Y'$, we have
$$ \{N \ge L / (\log L)^2\}\cap \, A \subseteq \bigcup_{t \in \calT} B(\delta, M, t)\cap A\,.$$
By union bound, $|\calT| \le \delta^{-1}T$, and time-stationarity in law of $Y'$, 
together with the fact that the set of active beads $Q'$ does not evolve with time, we have
\begin{align}
\bbP_\gamma( \{N \ge L / (\log L)^2\}\cap A ) &\lesssim_T \delta^{-1} \bbP_\gamma( B( \delta, M, 0 )\cap A ) \le C(\gamma) \delta^{-1} (L')^4 e^{-1/(C(\gamma)\delta M^2)}
\end{align}
where in the last line, we invoked \cref{thm:collisions}. In order to satisfy the condition of the latter statement, 
i.e. $\delta M^2(M-1) > 2$, it is sufficient to choose $\delta = L^{-\theta}$ for $\theta < 3/4$. 
For the bound to be meaningful, we need $\delta M^2 \ll 1$, which allows for the choice 
$\delta = L^{-\theta}$ for some $\theta > 2/3$. Thus, we deduce \eqref{eq:102} for any $\theta \in (\tfrac23, \tfrac34)$.
\end{proof}

\begin{proof}[Proof of \eqref{eq:98}] Define the sets $G^+ \supset G$ and $F^- \subset F$ according to
\begin{equation}\label{eq:63}
G^+ \eqdef
\begin{cases}
x_0 + F_{L / 2 + L/100}, & \one \\
F_{L/6+L/100} \cup (F_L \setminus F_{L/3-L/100}), & \two
\end{cases},
\qquad F^- \eqdef
\begin{cases}
x_0 + F_{L - L/100}, & \one \\
F_{L/5-L/100} \cup (F_L \setminus F_{L/4+L/100}), & \two
\end{cases}
\end{equation}
the key property, as used in the second half of this proof, being that $G^+$ and $(F^-)^c$ are separated by a gap of $L/100$, which is still of order $L$.

By construction, the bead $\fraki_0$ lies in $G$ at time $t_0 \in [0, T]$, therefore it is likely that at time $t = 0$ the bead lies in the slightly larger set $G^+$. Likewise, the bead $\fraki_N$ lies outside $F$ at time $t_N \in [0, T]$, therefore it is likely that it lies outside $F^-$ at time $t = 0$. In detail, we claim there exists $c = c(T)>0$ such that
\begin{equation}\label{eq:94}
\bbP_\gamma(\{(\frakl, y_0(\fraki_0)) \notin G^+\}\cap  A) + \bbP_\gamma(\{(\frakl, y_0(\fraki_N)) \in F^- \}\cap A) \lesssim (L')^4 e^{- c L^2} .
\end{equation}
The two summands can be estimated similarly so we will only focus on the first. 
Recall that $(\frakl(\fraki_0), Y^\sharp_t(\fraki_0)) \in G$ for at least one of 
$Y^\sharp=Y$ or $Y'$, and for some $t \in [0, T]$. If in addition $(\frakl(\fraki_0), Y^\sharp_0(\fraki_0)) \notin G^+$, then 
$\sup_{t \in [0, T]} |Y^\sharp_t(\fraki_0) - Y^\sharp_0(\fraki_0)| \ge L/100$. 
Hence, conditioning on $y_0$ before taking the union bound,
\begin{align}
\bbP_\gamma((\frakl, y_0(\fraki_0)) \notin G^+, \, A ) &\leq \bbE_\gamma\bigg[\ic_A \sum_{\sharp \in \{\emptyset, '\}} \sum_{\fraki \in Q^\sharp} \bbP\Big( \sup_{t \in [0, T]} |Y^\sharp_t(\fraki) - Y^\sharp_0(\fraki)| \ge \tfrac{L}{100} \mid y_0 \Big)\bigg]\\
&\lesssim (L')^4 e^{- L^2 / 20000 T }
\end{align}
where we denoted $Y=Y^\emptyset$ and applied \cref{lem:2}, noting that on $A$, $|Q^\sharp| \lesssim (L')^2$ for both processes.
Employing \eqref{eq:94}, we get
\begin{align}\label{eq:100}
&\bbP_\gamma( \{N < L / (\log L)^2\} \cap A )  \\
& \quad \quad \le \bbP_\gamma( \{N < L / (\log L)^2\}\cap \{(\frakl, y_0(\fraki_0)) \in G^+\}\cap\{(\frakl, y_0(\fraki_N)) \in F \setminus F^-\} ) + (L')^4 e^{- c L^2}\,.
\end{align}
Taking $x_0 = 0$ in case $\two$, for $L$ large enough the event at the r.h.s. of~\eqref{eq:100} is contained in $\{\Delta_L(\tau_{-x_0}\eta_0)>\log L\}$.
Indeed, considering the gap that separates $G^+$ from $F \setminus F^-$, either 
$$ | \frakl(\fraki_0) - \frakl(\fraki_N) | > L/100\quad \text{ or }\quad |y_0(\fraki_0) - y_0(\fraki_N)| > L/100. $$
However, we also have by the triangle inequality that 
$$ | \frakl(\fraki_0) - \frakl(\fraki_N) | \le N\quad\text{ and }\quad |y_0(\fraki_0) - y_0(\fraki_N)| \le N \Delta_L(\tau_{-x_0}\eta_0). $$
For this to be consistent with $N < L / (\log L)^2$ for large $L$, we must have $\Delta_L(\tau_{-x_0}\eta_0) > \log L$.
Therefore, 
\begin{equation}
\bbP_\gamma( \{N < L / (\log L)^2\}\cap \{(\frakl, y_0(\fraki_0)) \in G^+\}\cap\{(\frakl, y_0(\fraki_N)) \in F \setminus F^-\} ) \leq \bbP_\gamma( \Delta_L(\tau_{-x_0}\eta_0) > \log L)
\end{equation}
and the r.h.s. is small thanks to translation invariance of $\bfP_\gamma$ and \cref{prp:12}, which, once plugged into \eqref{eq:100}, concludes the proof.
\end{proof}

\section{Green--Kubo formula}\label{sec:goal-green}

The goal of this section is to prove one of the main results of the paper, 
namely Theorem~\ref{thm:GK}, which provides a Green-Kubo formula 
for the diffusion matrix in~\eqref{eq:DiffMat}.  Notice though that, as written, the expression 
defining the two-point function $S_t$, for $t\geq 0$, is formal 
since the height function $\ch$ is a step function in the vertical direction, 
so that Dirac deltas appear inside the expectation in \eqref{eq:Stij} whenever $i$ or $j = 2$.
Thus, we will begin by providing a rigorous definition of the latter, for which a crucial role will be played by the {\it current}. 

\subsection{The current, its properties and the diffusivity}
\label{sec:thediffusivity}

As is usual for  interacting particle systems, 
the {\it current} at a site $x$ over a time-interval $I$ is the (local) functional 
that counts (with a sign) the number of beads that cross $x$ over $I$. 
Let us give its rigorous definition. 

\begin{definition}\label{def:current}
Let $(\eta_r)_{r\ge0}$ be the dynamical bead model of \cref{cor:1}.
For $x\in F_\infty$ and $0\leq s<t$, the current at $x$ over the interval $[s, t]$ is the functional 
$J=J((\eta_r)_{r \in [s, t]}, x)$ defined according to 
\begin{equation}\label{eq:21}
J((\eta_r)_{r \in [s, t]}, x) \eqdef \sum_{\fraki \in Q_\infty} \ic\{ \frakl(\fraki) = \frakl(x) \} \big(  \ic\{ Y_t(\fraki) \ge y(x) \} - \ic\{ Y_s(\fraki) \ge y(x) \} \big)\,.
\end{equation}
Thus an upward crossing contributes $+1$ to the current and lowers the height by $1$. For the statically pinned height of \cref{def:Height}, this gives
$$ \ch^{x_0}(\eta_t, x) - \ch^{x_0}(\eta_s, x) = -J((\eta_r)_{r \in [s, t]}, x) + J((\eta_r)_{r \in [s, t]}, x_0). $$
\end{definition}

Let us collect in the following statement various properties of the current $J$. 

\begin{proposition}\label{prp:momentsJ}
Let $J$ be the current given according to~\cref{def:current}. For any  $x\in F_\infty$ and $0\leq s<t$, 
$J$ is a local function that depends only on the path segment $(\eta_r)_{r \in [s, t]}$
in an (arbitrarily small) neighbourhood of $x$ and the sum \eqref{eq:21} a.s.\ has only finitely many non-zero terms. 
Moreover, for $T \ge 0$, $\gamma \in (-1, 1)$, it satisfies the following estimates 
\begin{enumerate}
\item  for any $p \ge 1$, $\theta \in (0, \thalf)$ there exists a constant $C = C(\gamma, T, p, \theta) > 0$ 
such that for all $0 \le s \le t \le T$ it holds
\begin{equation}\label{eq:3}
\bbE_\gamma[ |J((\eta_r)_{r \in [s, t]}, x)|^p ] \le C (t-s)^{1/2 - \theta}\,,
\end{equation}
\item there exists a constant $C=C(\gamma,T)>0$ for all $t \in [0, T]$, $x_0, x \in F_\infty$ with $|x_0| \le 1$, it holds
\begin{equation}\label{eq:DecayCross} 
|\bbE_\gamma[J((\eta_s)_{s \in [0, t]}, 0) \ch^0(\tau_x\eta_0, x_0) ]| \le C e^{- C \log(|x|)^2 }\,.
\end{equation}
\end{enumerate}
\end{proposition}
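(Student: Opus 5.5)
Locality and finiteness come straight from the construction of $\eta$ in \cref{cor:1}. Fix $x=(\frakl,z)$ and a small $r>0$. On $[s,t]$ the restriction of $\eta$ to $\{\frakl\}\times[z-R,z+R]$ coincides with that of some truncated dynamics $\eta^{L}$ for $L$ large. Only finitely many beads of $Q_L$ lie on line $\frakl$. Beads whose trajectories stay above $z$, or stay below $z$, on all of $[s,t]$ contribute zero to \eqref{eq:21}. The ordering on line $\frakl$ is preserved, so every contributing bead must visit $\{\frakl\}\times[z-r,z+r]$ at some time in $[s,t]$. Hence $J$ is a function of the path segment in that neighbourhood, and the sum is a.s. finite.

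For \eqref{eq:3}, set $\delta=t-s$; by stationarity we may take $s=0$. We have $|J|\le N$, where $N$ is the number of beads on line $\frakl$ that cross level $z$ during $[0,\delta]$. Choose $\varepsilon=\delta^{1/2-\theta'}$ with $\theta'<\theta$. Split according to the event $E=\{$some bead with $|y_0-z|\le 1$ moves by at least $\varepsilon$ in time $\delta\}$. Beads with $|y_0-z|>1$ are handled by \cref{lem:2} with $\varepsilon=1$, which gives a cost of order $e^{-1/(2\delta)}$ times polynomial moments.
\begin{itemize}
\item On $E^\complement$, only beads with $y_0\in[z-\varepsilon,z+\varepsilon]$ can cross. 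Thus $N\le \#(\omega_0\cap\{\frakl\}\times[z-\varepsilon,z+\varepsilon])$.
\item On $E$, \cref{lem:2} (applied to a truncated dynamics agreeing with $\eta$ near $x$, through \eqref{eq:9}) gives $\bbP(E)\lesssim e^{-\varepsilon^2/(2\delta)}=e^{-\delta^{-2\theta'}/2}$.
\end{itemize}
By \eqref{eq:soshni} and \eqref{boundrhon}, every moment of the point count in an interval of length $2\varepsilon$ is $O(\varepsilon)$ for $\varepsilon\le1$. Therefore $\bbE[|J|^p\ic_{E^\complement}]\lesssim \varepsilon=\delta^{1/2-\theta'}$. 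On $E$ we apply Cauchy--Schwarz, bounding $N$ by the number of beads that travel at least distance $\min(1,\cdot)$ or start nearby; its $2p$-th moment is controlled using \cref{lem:2} and \cref{prp:11}. This yields a contribution that is superpolynomially small in $\delta$. Together these give \eqref{eq:3}.

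For \eqref{eq:DecayCross}, translate by $-x$ so that $J$ sits at $-x$ and the height is $\ch$ near the origin, up to a pinning correction. Note that $\bbE_\gamma[J]=0$ by stationarity, since $\bbE_\gamma[J]$ is a difference of equal expectations of local counts. Next, replace $\eta$ by the annulus dynamics $\tilde\eta^L$ with $L\approx|x|$, centred so that the support of $J$ lies near the origin box and $x_0,0$ are excluded; equivalently, use \cref{thm:annulus} at the appropriate centre. The error is at most $e^{-C(\log L)^2}$ times $\|J\|_{L^2}\|\ch\|_{L^2}$. The height moment here grows only polynomially in $|x|$, because $|\ch^0(x)|$ is bounded by the number of beads on a path of length $|x|$, and all moments are finite by \cref{prp:11}. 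This polynomial factor is absorbed by the $e^{-C(\log L)^2}$ decay.

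Under the annulus dynamics the evolution inside the inner box is conditionally independent of everything else given $y_0$. The function $\ch^0(\cdot,x_0)$ is a function of $y_0$ alone. So the quantity to control is $\bbE_\gamma[\ch\cdot \bbE[J\mid y_0]]$, where $\bbE[J\mid y_0]$ depends only on $y_0$ inside $\FTil^0_L$.

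The main obstacle is that static correlations only decay like \eqref{e:TPDecay}, polynomially. The resolution I would try is reversibility. For the truncated inner dynamics, reversibility of the conditional uniform measure (\cref{prp:5}) shows that $\bbE[J\mid \text{boundary data}]=0$. Indeed, the conditional law given $y_0$ outside the inner box is stationary and reversible for the frozen-boundary inner dynamics, so the expected current vanishes. Hence, conditioning on $\omega_0$ outside $\FTil^0_L$, we get $\bbE[\tilde J\,\ch]=\bbE[\ch\,\bbE(\tilde J\mid\omega_0\restr_{(\FTil^0_L)^c})]$ provided $\ch^0(x_0)$ is measurable with respect to the outside configuration. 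That is not quite true, since the height at the origin depends on beads inside the box. I would handle this by rewriting $\ch^0(x_0)$ as a path sum of bead counts that avoids the inner box, which is possible up to a correction by the count of beads in the box crossing a fixed segment. This gives zero up to the $e^{-C(\log L)^2}$ error, proving \eqref{eq:DecayCross}.
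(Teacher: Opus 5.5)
Locality, finiteness and \eqref{eq:3} are handled essentially as in the paper. Your use of the fact that all moments of the bead count in an interval of length $2\varepsilon$ are $O(\varepsilon)$ is a slightly cleaner variant of the paper's tail splitting. One caveat: the uniform bound on $\bbE_\gamma[|J|^{2p}]$ for the infinite system should be obtained scale by scale, namely $\bbP_\gamma(|J|>\lambda)\le e^{-C(\log\lambda)^2}$ via \eqref{eq:9} with $L=\lambda/2$ and the bead-count bound behind \cref{prp:11}, rather than from \cref{lem:2} directly.

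The genuine problem is the last step of your proof of \eqref{eq:DecayCross}, which rests on a misreading of the height functional. Since $|x_0|\le 1$, $\ch^0(\tau_x\eta_0,x_0)$ is the height increment between two points at mutual distance at most $1$, located at distance about $|x|$ from the site of $J$. It is a height \emph{difference}, so there is no pinning correction. Its moments are bounded uniformly in $x$ rather than growing with $|x|$. It does not involve the beads in the box carrying $J$. Your core reduction is exactly the paper's mechanism: condition on the configuration outside that box, and use that the frozen-boundary dynamics is stationary for the conditional uniform law, so the conditional mean current vanishes. That reduction needs precisely the measurability you isolate. But you then deny that measurability and propose a fix that cannot work. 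Writing the height as a path sum avoiding the box ``up to a correction by the count of beads in the box crossing a fixed segment'' leaves a bead count next to the site of $J$, and such a count is genuinely correlated with $J$. For instance, with $J$ the current at $0$, $\bbE_\gamma[J\cdot\#(\omega_0\cap(\{0\}\times[0,a)))]$ is nonzero, of order $\sqrt t$, because a bead just above the level tends to cross it downwards; compare the nonvanishing static term of \cref{sec:static-term}. So ``this gives zero up to the $e^{-C(\log L)^2}$ error'' is false in the situation you describe.

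What is actually needed is a \emph{localisation} of the increment, which is only quasi-local. For example, $\ch^0(\eta,(1,0))=\pm\frac12$ according to which of the lines $0,1$ carries the highest bead below the origin, and that bead may be far away. The paper sets $N=|x|/4$ and $g(\eta)=\ch^0(\eta,x_0)\ic\{\Delta_N(\eta)\le\log N\}$, a function of $\eta\restr_{F_N}$ only. By \cref{prp:12}, \eqref{eq:9} and H\"older, replacing $J$ by $J^L$ and $\ch^0(\tau_x\eta_0,x_0)$ by $g(\tau_x\eta_0)$ costs at most $Ce^{-C(\log|x|)^2}$. (With three factors you need fourth moments, not the $L^2$ norms you wrote.) Take $L$ of order $|x|/2$. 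Then the region on which $g(\tau_x\cdot)$ depends is frozen for $\eta^L$, so $g(\tau_x\eta^L_0)=g(\tau_x\eta^L_t)$. Writing $J^L=\ch^{x_1}(\eta^L_0,0)-\ch^{x_1}(\eta^L_t,0)$ with $x_1\notin F_L$, stationarity of $\eta^L$ gives $\bbE_\gamma[J^L g(\tau_x\eta_0)]=0$; equivalently, your conditioning argument now applies legitimately. The annulus dynamics is unnecessary here, since the height is evaluated only at time $0$. It is needed only when both functionals depend on the path, as in \cref{lem:4}.
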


\begin{remark}\label{rem:quasi-local}
Using the same tools and procedure as in the proof of \cref{prp:momentsJ}, one can show 
that for sufficiently local functions $f, g \in L^{2+\varepsilon}(\bfP_\gamma)$, for some $\varepsilon > 0$, 
the bound in \eqref{eq:corrdecay-informal} holds. 
A sufficient condition would be, for example, that the function $f$ (and $g$) becomes local when conditioning on the bead gap not being too large, i.e. if there exists a constant $R > 0$ and a family of local functions $f_L \colon\Omega(F_{R \log L}) \rightarrow \bbR$ such that
$$ f(\omega) \ic\{ \Delta_{R \log L}(\omega) < \log L \} = f_{L}( \omega\restr_{F_{R \log L}} ). $$
\end{remark}

\begin{proof}
The locality immediately follows by definition. To see that the sum has finitely many terms, 
fix $x \in F_\infty$, then the current $J$ in \eqref{eq:21} depends only on the behaviour of $(\eta_r)_{r \in [s, t]}$ 
in the neighbourhood $x + F_1$. From Corollary \ref{cor:1}, $(\eta_r\restr_{x+F_1})_{r \in [0, t]} = (\eta^L_r\restr_{x+F_1})_{r \in [0, t]}$ 
eventually in $L$, therefore there is a large enough $L$ such that $J((\eta_r)_{r \in [s, t]}, x) = J((\eta^L_r)_{r \in [s, t]}, x)$. 
The latter is a finite sum because $\eta^L$ has only a finite number of active particles almost surely.
\medskip

\noindent{\it The estimate \eqref{eq:3}.} By time and spatial stationarity, it is enough to consider $x = 0$ and $s = 0$. 
Let us begin by approximating $\eta$ by the truncated dynamics $\eta^L$, for some $L$ to be fixed later.
Namely, thanks to \eqref{eq:9}, we have
\begin{align}\label{eq:7}
\bbP_\gamma( |J((\eta_r)_{r \in [0, t]}, 0)| > \lambda ) \le e^{- C (\log L)^2} + \bbP_\gamma( |J((\eta^L_r)_{r \in [0, t]}, 0)| > \lambda ) .
\end{align}
For the rest of this proof, the constant $C = C(\gamma, T, p)$ may change from one line to the next.

We estimate the tail probability $\bbP_\gamma( |J((\eta_r)_{r \in [0, t]}, 0)| > \lambda )$. 
We consider separately large $\lambda > \Lambda_t$ and small $\lambda \le \Lambda_t$ for $\Lambda_t$ 
to be determined as a function of $t$. 

For large $\lambda > \Lambda_t$, we consider a truncation at $L = \lambda/2$. Note that if 
$|J((\eta^L_r)_{r \in [0, t]}, 0)| > \lambda$ then $\eta_0( \ic_{\{0\} \times [-L, L]} ) \ge \lambda $, i.e. 
the current can only exceed $\lambda$ if there are more than $\lambda$ active beads in the central column. 
Since $\lambda = 2L$ and the density of beads is $\pi^{-1}$, arguing as in \cref{prp:11}, we get
\begin{align}
\bbP_\gamma( |J((\eta^L_r)_{r \in [0, t]}, 0)| > \lambda ) \le C e^{-C L^2}\,.
\end{align}
Taking the worst growth in $L$, and substituting $L = \lambda / 2$, we obtain 
\begin{equation}\label{eq:14}
\bbP_\gamma( |J((\eta_r)_{r \in [0, t]}, 0)| > \lambda ) \le e^{- C (\log \lambda )^2}, \qquad \text{for } \lambda > \Lambda_t\,.
\end{equation}
Instead, for $1 \le \lambda \le \Lambda_t$, we truncate at a value of $L = L(t)$. 
We introduce $\varepsilon > 0$, and consider the event 
$ B_\varepsilon = \{ \eta_0\restr_{[-\varepsilon, \varepsilon]} = \emptyset \}$ 
which, by integrating the one-point density over $[-\varepsilon, \varepsilon]$ and applying Markov's inequality, satisfies
$\bbP( B_\varepsilon^c ) \le C \varepsilon$. 
Then, 
\begin{align}
\bbP_\gamma( |J((\eta_r)_{r \in [0, t]}, 0)| > \lambda ) \le e^{- C (\log L)^2} + C \varepsilon + \bbP_\gamma( \{|J((\eta^L_r)_{r \in [0, t]}, 0)| > \lambda\}\cap B_\varepsilon) \,.
\end{align}
Now, since $\lambda \ge 1$, for $A(t, \varepsilon, \fraki)$ as in \eqref{eq:75}, we have
$$ \{|J((\eta^L_r)_{r \in [0, t]}, 0)| > \lambda\}\cap B_\varepsilon \subseteq \bigcup_{\fraki \in Q_L} A(t, \varepsilon, \fraki)\,.$$
In words, for the current to be non-zero, if the $\varepsilon$-neighbourhood of the origin has no beads, then at least one of the particles needs to move a distance of $\varepsilon$. Using a union bound and \cref{lem:2}, and conditioning further on $Q_L \subset P_L$, which has lower order thanks to \cref{prp:11}, we obtain 
$$ \bbP_\gamma( |J((\eta_r)_{r \in [0, t]}, 0)| > \lambda ) \le e^{- C (\log L)^2} + C \varepsilon + C L^4 \exp (-\varepsilon^{2} / 2t). $$
At this point, choose $\varepsilon = t^{1/2 - \theta}$, so that 
\begin{align}
\bbP_\gamma( |J((\eta_r)_{r \in [0, t]}, 0)| > \lambda ) &\le e^{- C (\log L)^2} + C t^{1/2 - \theta} + C L^4 \exp (- t^{- 2 \theta} / 2 ) \\
&\le e^{- C (\log L)^2} + C L^4 t^{1/2 - \theta}
\end{align}
and, setting $L = t^{-\theta/4}$, we conclude
\begin{equation}\label{eq:12}
\bbP_\gamma( |J((\eta_r)_{r \in [0, t]}, 0)| > \lambda ) \le C t^{1/2 - 2\theta}, \qquad \text{for } 1 \le \lambda \le \Lambda_t\,.
\end{equation}
Combining \eqref{eq:14} and \eqref{eq:12}, we get for $p \in \bbN$, 
\begin{align}
\bbE_\gamma[ |J((\eta_r)_{r \in [0, t]}, 0)|^p ] &\le \sum_{\lambda = 1}^{\Lambda_t} \lambda^{p-1} \bbP_\gamma( |J((\eta_r)_{r \in [0, t]}, 0)| > \lambda )  + \sum_{\lambda = \Lambda_t}^{\infty} \lambda^{p-1} \bbP_\gamma( |J((\eta_r)_{r \in [0, t]}, 0)| > \lambda ) \\
&\le C t^{1/2 - 2\theta} \sum_{\lambda = 1}^{\Lambda_t} \lambda^{p-1}  + \sum_{\lambda = \Lambda_t}^{\infty} \lambda^{p-1} e^{- C (\log \lambda )^2}  \\
&\le C \Lambda_t^p t^{1/2 - 2\theta} + C e^{- C (\log \Lambda_t)^2}.
\end{align}
The bound follows upon setting $\Lambda_t = t^{-\theta/p}$ and replacing $\theta$ by $3\theta$.
\medskip

\noindent{\it The estimate \eqref{eq:DecayCross}.} First, we localise the height function. Setting $N = |x| / 4$, define
$$ g(\eta) = \ch^0(\eta, x_0) \ic\{ \Delta_N(\eta) \le \log N \} $$
so that $g(\eta)$ is a function of $\eta\restr_{F_N}$. 
Further, consider the event 
\begin{equation}\label{eq:11}
A \eqdef \{( \eta_t\restr_{F_{L/2}} )_{t \in [0, T]} = ( \eta^{L}_t\restr_{F_{L/2}} )_{t \in [0, T]} \}.
\end{equation}
Upon defining $B = \{ \Delta_N(\tau_x\eta) \le \log N \}$, \eqref{eq:9} and Proposition \ref{prp:12} imply that 
$$\bbP\big( (A \cap B)^\complement\big)^{1/2} \le Ce^{- C \log(|x|)^2} .$$
Writing $J_0 \eqdef J((\eta_s)_{s \in [0, t]}, 0)$ and $J^L_0 \eqdef J((\eta^L_s)_{s \in [0, t]}, 0)$, by Cauchy-Schwarz we get 
\begin{align}
\big| \bbE_\gamma[ J_0 \ch^0(\tau_x\eta_0, x_0) ] \big| &\le \big| \bbE_\gamma[J_0 \ch^0(\tau_x\eta_0, x_0) \ic_{A \cap B} ] \big| + Ce^{- C \log(|x|)^2}\bigl( \bbE_\gamma[ J_0^4 ] \bfE_\gamma[ \ch^0(\eta, x_0)^4 ] \bigr)^{1/4} \\
&= \big| \bbE_\gamma[J^L_0 g(\tau_x\eta_0) \ic_{A \cap B} ] \big| + Ce^{- C \log(|x|)^2}\bigl( \bbE_\gamma[ J_0^4 ] \bfE_\gamma[ \ch^0(\eta, x_0)^4 ] \bigr)^{1/4} \\
&\le \big| \bbE_\gamma[J^L_0 g(\tau_x\eta_0) ] \big| + 2 Ce^{- C \log(|x|)^2}\bigl( \bbE_\gamma[ J_0^4 ] \bfE_\gamma[ \ch^0(\eta, x_0)^4 ] \bigr)^{1/4} .
\end{align}
The moments on the r.h.s.\ can all be bounded uniformly over $x_0 \in F_\infty$ with $|x_0| \le 1$, 
therefore the claim follows upon proving
$ \bbE_\gamma[J^L_0 g(\tau_x\eta_0) ] = 0 $. 
To see this, we write 
$$ J^L_0 = \ch^{x_1}(\eta^L_0, 0) - \ch^{x_1}(\eta^L_t, 0) $$
which holds provided that the height is pinned at a point $x_1 \in F_\infty \setminus F_L$ outside of the active region for $\eta^L$. Moreover, by construction $g(\tau_x\eta_0)$ depends on a static region of the dynamics, therefore $g(\tau_x\eta_0) = g(\tau_x\eta_t)$. Together this gives
\begin{align}
\bbE_\gamma[J^L_0 g(\tau_x\eta_0) ] &= \bbE_\gamma[\ch^{x_1}(\eta^L_0, 0) g(\tau_x\eta^L_0) ] - \bbE_\gamma[\ch^{x_1}(\eta^L_t, 0) g(\tau_x\eta^L_0) ] \\
&= \bbE_\gamma[\ch^{x_1}(\eta^L_0, 0) g(\tau_x\eta^L_0) ] - \bbE_\gamma[\ch^{x_1}(\eta^L_t, 0) g(\tau_x\eta^L_t) ]
\end{align}
which is zero because both terms are expectations against the stationary measure.
\end{proof}

Now that the current has been defined, let us introduce a suitable class of 
distributions on $F_\infty$. For $x_0 \in F_\infty$, define the distribution $U^{x_0}_t \in (C^\infty_c(F_\infty))'$ according to
\begin{align}\label{eq:22}
U^{x_0}_t[f] \eqdef \int f(x) u^{x_0}_t(-x)\dd x, \quad\text{and}\quad u^{x_0}_t(x) \eqdef \bbE_\gamma[J((\eta_s)_{s \in [0, t]}, 0) \ch^{x_0}(\eta_0, x) ] . 
\end{align}
Because both $J$ and $\ch^{x_0}$ have all moments (thanks to the previous Proposition), 
$U^{x_0}_t$ is well defined, with $|U^{x_0}_t[f]| \lesssim \lVert f \rVert_\infty$. Moreover, if $f \in C^\infty_0(F_\infty)$, 
then $U^{x_0}_t[f]$ is independent of $x_0 \in F_\infty$. In the next statement, we study the decay properties of $u$. 

\begin{proposition}\label{prp:3}
For $T \ge 0$, $\gamma \in (0,1)$, there exists a constant $C = C(\gamma, T) > 0$ such that for all $t \in [0, T]$,
$x, x_0 \in F_\infty$,
\begin{equation}\label{eq:13}
|u^{x_0}_t(x)| \le C e^{- C \log(|x| \wedge |x_0|)^2 }\,.
\end{equation}
In particular, there exists a pointwise limit
\begin{align}\label{eq:82}
u_t(x) \eqdef \lim_{x_0 \rightarrow \infty} u^{x_0}_t(x),
\end{align}
which satisfies 
\begin{equation}\label{eq:97}
|u_t(x) - u^{x_0}_t(x)| \le C e^{- C \log(|x_0|)^2}\qquad\text{and}\qquad |u_t(x)| \le C e^{- C \log(|x|)^2}\,.
\end{equation}
We define the distribution $U_t \in (C^\infty_p(F_\infty))'$ according to \eqref{eq:22} but with $u_t$ replacing $u^{x_0}_t$. 
\end{proposition}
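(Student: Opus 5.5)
We reduce \eqref{eq:13} to the decay estimate \eqref{eq:DecayCross} of \cref{prp:momentsJ}, using translation invariance and the vanishing of $\bbE_\gamma[J]$. The pinned height is a telescoping difference, $\ch^{x_0}(\eta_0,x)=\ch(\eta_0,x)-\ch(\eta_0,x_0)$. We choose a lattice path from $x_0$ to $x$ made of unit steps, horizontal or vertical of length at most $1$. Along it we write
\begin{equation}
\ch^{x_0}(\eta_0,x)=\sum_{k}\ch^{z_k}(\eta_0,z_{k+1}),\qquad |z_{k+1}-z_k|\le1 .
\end{equation}
By stationarity, each summand satisfies
\begin{equation}
\bbE_\gamma\big[J((\eta_s)_{s\in[0,t]},0)\,\ch^{z_k}(\eta_0,z_{k+1})\big]=\bbE_\gamma\big[J((\eta_s)_{s\in[0,t]},0)\,\ch^0(\tau_{z_k}\eta_0,z_{k+1}-z_k)\big].
\end{equation}
This is exactly the quantity controlled in \eqref{eq:DecayCross}, up to the sign convention for $\tau$, with $|z_{k+1}-z_k|\le1$. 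It is therefore bounded by $Ce^{-C\log(|z_k|)^2}$.

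A naive path sum is not enough, since the path may pass near the origin. We instead route the path away from $0$. If $|x|\wedge|x_0|=R$ is large, both endpoints lie outside the ball of radius $R$. We connect them by a path that stays at distance $\gtrsim R$ from the origin: go radially outward to radius $\max(|x|,|x_0|)$, then along a "circle". This is possible in $\bbZ\times\bbR$ up to constants, and for $R\le1$ the bound is trivial from the moment bounds. Summing, we get
\begin{equation}
|u^{x_0}_t(x)|\lesssim\sum_{r\ge R} r\, e^{-C(\log r)^2}+(\text{path length})\, e^{-C(\log R)^2}.
\end{equation}
The difficulty is that the path length is about $\max(|x|,|x_0|)$, which may be huge compared with $R$. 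To handle this, the circular part at radius $\rho$ costs $\rho\, e^{-C(\log\rho)^2}$, while the radial parts cost $\sum_{r\ge R} e^{-C(\log r)^2}$. We travel along an arc only at radius $R$ and move radially outward in both directions from there. The total is then $\lesssim\sum_{r\ge R}e^{-C(\log r)^2}+Re^{-C(\log R)^2}\lesssim e^{-C'(\log R)^2}$. The factor $e^{-C(\log r)^2}$ beats any power, so the sum is dominated by $e^{-C'(\log R)^2}$ after adjusting $C$. All unit-scale moments needed for the trivial regime come from \eqref{eq:3} and \cref{prp:11}. This gives \eqref{eq:13}.

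For the limit \eqref{eq:82}, we observe that $u^{x_0}_t(x)-u^{x_0'}_t(x)=\bbE_\gamma[J\,\ch^{x_0}(\eta_0,x_0')]$. This is the same quantity with $x$ replaced by $x_0'$. It is therefore bounded by $Ce^{-C\log(|x_0|\wedge|x_0'|)^2}$, and the family is Cauchy as $x_0\to\infty$. Hence the limit $u_t(x)$ exists, and letting $x_0'\to\infty$ gives the first bound in \eqref{eq:97}. For the second bound, we take $|x_0|\ge|x|$ in \eqref{eq:13} and pass to the limit. This yields $|u_t(x)|\le Ce^{-C\log(|x|)^2}$, which makes $U_t$ a well-defined element of $(C^\infty_p(F_\infty))'$.

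The main obstacle is the path-routing step. The path must avoid a neighbourhood of the origin, and its total length must be controlled by the decay summed radially. A secondary point is checking that \eqref{eq:DecayCross} indeed applies uniformly along the path in both horizontal and vertical unit increments, which holds because $|x_0|\le1$ there.
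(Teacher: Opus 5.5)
Your proposal is correct and is essentially the paper's proof. You telescope $\ch^{x_0}(\eta_0,x)$ along a unit-step path from $x_0$ to $x$ kept outside $F_{|x|\wedge|x_0|}$, bound each increment via \eqref{eq:DecayCross}, and sum the super-polynomially small terms; your radial-plus-arc routing just makes explicit the paper's bound by a sum over distinct points outside the ball. The additivity $u^{x_0}_t(x)-u^{x_1}_t(x)=u^{x_0}_t(x_1)$ then gives the Cauchy property and \eqref{eq:97}, exactly as in the paper.

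One small slip: the regime $|x|\wedge|x_0|\le 1$ is not ``trivial from moment bounds'' when the other endpoint is far away, since Cauchy--Schwarz alone gives a bound that grows with $|x-x_0|$. Your own routing argument, with the arc taken at unit radius, covers that case as well.
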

\begin{proof}
Let $(x_n)_{n = 0}^N$ be a sequence of sites within $F_\infty$ starting from $x_0$ and 
such that $x_N = x$ and $|x_n - x_{n-1}| \le 1$ for all $n \in 1{:}N$. 
We choose the path that stays outside of the ball $F_{|x| \wedge |x_0|}$ and whose points are pairwise distinct.

By expanding as a telescopic sum and then translating the height function, we have
$$ \ch^{x_0}(\eta_0, x) = \sum_{n = 1}^{N} \ch^{x_{n-1}}(\eta_0, x_n) = \sum_{n = 1}^{N} \ch^0(\tau_{-x_{n-1}}\eta_0, x_n - x_{n-1}) $$
therefore
\begin{align}
|\bbE_\gamma[J((\eta_s)_{s \in [0, t]}, 0) \ch^{x_0}(\eta_0, x) ]| &\le \sum_{n = 1}^{N} |\bbE_\gamma[J((\eta_s)_{s \in [0, t]}, 0) \ch^0(\tau_{-x_{n-1}}\eta_0, x_n - x_{n-1}) ]| \\
&\le \sum_{n = 1}^{N} C e^{-C \log(|x_n|)^2} \le \sum_{|x'| \ge |x| \wedge |x_0|} C e^{-C \log(|x'|)^2} 
\end{align}
where in the last line we used \eqref{eq:DecayCross} and the bound \eqref{eq:13} follows from a change of $C$.

For the existence of the pointwise limit, one sees from the additivity that $u^{x_0}_t(x) - u^{x_1}_t(x) = u^{x_0}_t(x_1)$, therefore considering \eqref{eq:13} we have
\begin{equation}\label{eq:105}
|u^{x_0}_t(x) - u^{x_1}_t(x)| = |u^{x_0}_t(x_1)| \le C e^{- C \log(|x_1| \wedge |x_0|)^2 }
\end{equation}
This shows the sequence $(u_t^{x_0}(x))_{x_0 \in F_\infty}$ is Cauchy for fixed $x \in F_\infty$. 
Taking a limit of \eqref{eq:105} as $x_0 \rightarrow \infty$ gives the second bound in \eqref{eq:97}, while the limit 
as $x_1 \rightarrow \infty$ gives \eqref{eq:97}.

At last, note that the decay in \eqref{eq:82} allows to test $U$  
against functions $f \in C^\infty_p(F_\infty)$ that are, along with all of their derivatives, of at most polynomial growth, 
and thus $U\in (C_p^\infty(F_\infty))'$. 
\end{proof}

We are now ready to rigorously define the two-point function $S$ and the diffusivity $D$. 
For test functions $f$, use $\nabla_i$ as in \eqref{eq:nablaH}, and let
$$ \nabla_1^- f(\frakl,y) \eqdef f(\frakl,y) - f(\frakl-1,y), \qquad \nabla_2^- f \eqdef \nabla_2 f. $$
Thus summation or integration by parts takes the form $\int f \, \nabla_i g \, \dd x = -\int (\nabla_i^- f) g \, \dd x$.

\begin{definition}\label{def:Diffu}
Given $f \in C^\infty_p(F_\infty)$, we define
\begin{equation}\label{eq:30}
S_t[f]\eqdef \sum_{i,j=1}^2 a_{ij}(\gamma) S_{t, ij}[f] \,,\qquad\text{where}\qquad S_{t, ij}[f] \eqdef U_t[ \nabla_j\nabla_i^- f ]
\end{equation}
where $U_t\in (C^\infty_p(F_\infty))'$ is defined according to~\cref{prp:3}. 
The diffusivity matrix $D = (D_{IJ})_{I,J \in \{1, 2\}}$ is then given according to 
\begin{equation}
\label{eq:83}
D_{IJ}(t) \eqdef t^{-1} S_t[ f_{IJ} ] \quad \text{ in which } \quad f_{IJ}(x) \eqdef x_I x_J.
\end{equation}
\end{definition}

The next proposition tells us that $S_{t, ij}$ defined in \eqref{eq:30} is the correct distributional interpretation of \eqref{eq:Stij}.

\begin{proposition}\label{prp:2}
Let $\rho \in C^\infty_c(\bbR)$ be a unit mollifier, i.e. $\int \rho \, \dd y = 1$. 
Write $S^\varepsilon$ for the $S$ of \eqref{eq:Stij} in which the height function $\ch$ is replaced by 
$\ch_\varepsilon = \rho_\varepsilon * \ch$, where the convolution acts only in the continuous direction and 
in which $\rho_\varepsilon$ is given by $\rho_\varepsilon \eqdef \varepsilon^{-1}\rho(\cdot/\varepsilon)$ 
(so that, as $\varepsilon\to 0$, $\rho_{\varepsilon}$ converges to a delta function at the origin). 
Then, 
\begin{equation}\label{eq:96}
\lim_{\varepsilon\to0}S^\varepsilon_{t, ij}[f] = S_{t, ij}[ f ], \qquad \forall f \in C^\infty_c(F_\infty), 
\end{equation}
the r.h.s. being independent of $\rho$. 
\end{proposition}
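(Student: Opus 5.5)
The plan is to express the mollified two-point function $S^\varepsilon_{t,ij}[f]$ in terms of the current $J$ and the pinned height, and then pass to the limit using the moment and decay bounds of \cref{prp:momentsJ} and \cref{prp:3}.

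\emph{Step 1: rewrite the time increment.} For fixed $\varepsilon$, the mollified height $\ch_\varepsilon$ has a genuine gradient: $\nabla_1\ch_\varepsilon$ is a finite difference, and $\nabla_2\ch_\varepsilon = \partial_y\rho_\varepsilon*\ch$. So $S^\varepsilon_{t,ij}(x)$ is a bona fide function and pairs with $f\in C^\infty_c(F_\infty)$. By \cref{def:current}, with a pinning point $x_0$,
$$\ch(\eta_t,\cdot)-\ch(\eta_0,\cdot) = -J((\eta_r)_{r\in[0,t]},\cdot)+J((\eta_r)_{r\in[0,t]},x_0).$$
The constant term is killed by $\nabla_i$, so
$$\nabla_i\ch_\varepsilon(\eta_t,x)-\nabla_i\ch_\varepsilon(\eta_0,x) = -\nabla_i(\rho_\varepsilon*J)(x).$$
We then integrate by parts in $x$ against $f$, moving $\nabla_i$ onto $f$ as $-\nabla_i^-$. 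Next we use spatial stationarity (\cref{cor:1}) to translate the current to the origin. The factor $\nabla_j\ch_\varepsilon(\eta_0,0)$ becomes $\nabla_j\ch_\varepsilon(\eta_0,-x)$, and we write it as $\nabla_j$ of the pinned height $\ch^{x_0}$. Integrating by parts once more puts $\nabla_j$ on $f$ as well. The result is
$$S^\varepsilon_{t,ij}[f] = \int (\nabla_j\nabla_i^- f_\varepsilon)(x)\, u^{x_0}_t(-x)\,\dd x,$$
where $f_\varepsilon$ is $f$ convolved in the continuous direction with a convolution of $\rho_\varepsilon$ and its reflection. Signs have to be tracked carefully in this step, and the reflection in $x$ matches the $u_t(-x)$ in \eqref{eq:22}.

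\emph{Step 2: justify the manipulations.} The interchanges of expectation and integration and the integrations by parts need justification. All functions involved are supported in a compact neighbourhood of $\supp f$. $J$ and $\ch^{x_0}$ have all moments by \cref{prp:momentsJ}, and $\ch$ is locally a finite sum. Since $f$ is compactly supported, $\nabla_j\nabla_i^-f_\varepsilon$ has integral zero. Hence the expression does not depend on $x_0$, and we may replace $u^{x_0}_t$ by $u_t$ using \eqref{eq:97} and let $x_0\to\infty$.

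\emph{Step 3: remove the mollifier.} It remains to show $U_t[\nabla_j\nabla_i^-f_\varepsilon]\to U_t[\nabla_j\nabla_i^-f]$. The functions $\nabla_j\nabla_i^-f_\varepsilon$ converge uniformly to $\nabla_j\nabla_i^-f$ with supports in a fixed compact set. Moreover $|U_t[g]|\lesssim\|g\|_\infty$ for $g$ with compact support, because $u_t$ is bounded by \eqref{eq:97}. This gives the convergence. The limit is manifestly independent of $\rho$, which proves \eqref{eq:96}.

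The main obstacle is Step 1 when $j=2$. There $\nabla_2\ch(\eta_0,0)$ is only a sum of Dirac masses, so the factor at the origin is not a function. The point is that it becomes a genuine function only after mollification, and that the derivative can be moved off the static factor onto $f$. If the translation trick causes trouble, I would first try mollifying in the $0$ variable, i.e. averaging the second factor over a small window using stationarity.
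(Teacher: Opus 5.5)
Your proposal is correct and follows the paper's proof essentially step by step. Both write the time increment as $-\nabla_i(\rho_\varepsilon * J)$, sum or integrate by parts, translate the current to the origin by spatial stationarity, express the static factor through the pinned height $\ch^{x_0}$, integrate by parts once more, and use the zero mass of $\nabla_j\nabla_i^- f$ to remove the dependence on $x_0$. The only difference is in the limit $\varepsilon\to0$: the paper passes to the limit directly inside $\bbE_\gamma[J_\varepsilon(0)\,\ch^{x_0}_\varepsilon(\eta_0,-x)]$, whereas you move both mollifiers onto the test function (the resulting shift of the pinning point is harmless by the same zero-mass property) and conclude by uniform convergence plus local boundedness of $u_t$, which is a slightly cleaner justification of that step.
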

\begin{proof}
We will use the pathwise identities
\begin{align}
\nabla_i \ch(\eta_t, x) - \nabla_i \ch(\eta_0, x) &= -\nabla_i J( (\eta_s)_{s \in [0, t]}, x ) \label{eq:20} \\
J((\eta_s)_{s \in [0, t]}, x) &= J((\tau_{-x}\eta_s)_{s \in [0, t]}, 0) \label{eq:19} 
\end{align}
We obtain \eqref{eq:96} from \eqref{eq:Stij} as follows: writing $J_\varepsilon(x) = \rho_\varepsilon * J(x)$,
\begin{align}
S^\varepsilon_{t, ij}[f] &= \int f(x) \bbE_\gamma[( \nabla_i \ch_\varepsilon(\eta_t, x) - \nabla_i \ch_\varepsilon(\eta_0, x) ) \nabla_j \ch_\varepsilon(\eta_0, 0) ] \dd x \\
&= - \int f(x) \bbE_\gamma[ \nabla_i J_\varepsilon((\eta_s)_{s \in [0, t]}, x) \nabla_j \ch_\varepsilon(\eta_0, 0) ] \dd x \\
&= \int \nabla_i^- f(x) \bbE_\gamma[ J_\varepsilon((\eta_s)_{s \in [0, t]}, x) \nabla_j \ch_\varepsilon(\eta_0, 0) ] \dd x \\
&= \int \nabla_i^- f(x) \bbE_\gamma[ J_\varepsilon((\tau_{-x}\eta_s)_{s \in [0, t]}, 0) \nabla_j \ch_\varepsilon(\eta_0, 0) ] \dd x \\
&= \int \nabla_i^- f(x) \bbE_\gamma[ J_\varepsilon((\eta_s)_{s \in [0, t]}, 0) \nabla_j \ch_\varepsilon(\tau_x\eta_0, 0) ] \dd x \\
&= \int \nabla_i^- f(x) \bbE_\gamma[ J_\varepsilon((\eta_s)_{s \in [0, t]}, 0) \nabla_j \ch_\varepsilon(\eta_0, -x) ] \dd x \\
&= \int \nabla_j\nabla_i^- f(x) \bbE_\gamma[ J_\varepsilon((\eta_s)_{s \in [0, t]}, 0) \ch^{x_0}_\varepsilon(\eta_0, -x) ] \dd x \\
& \stackrel{\varepsilon\to0}\longrightarrow \int \nabla_j\nabla_i^- f(x) \bbE_\gamma[ J((\eta_s)_{s \in [0, t]}, 0) \ch^{x_0}(\eta_0, -x) ] \dd x = U^{x_0}_t[ \nabla_j\nabla_i^- f]\,.
\end{align}
Here we used \eqref{eq:20}, summation or integration by parts, \eqref{eq:19}, and spatial stationarity. In the last integration by parts, the reflection $x \mapsto -x$ contributes a second minus sign and gives the forward difference $\nabla_j$ acting on $\nabla_i^- f$.

Finally, since $\nabla_j\nabla_i^- f$ has zero total mass, $U^{x_0}_t[ \nabla_j\nabla_i^- f]$ is independent of $x_0 \in F_\infty$, and
sending $x_0 \rightarrow \infty$ yields \eqref{eq:96}.
\end{proof}

\subsection{Decomposition into static/dynamic}\label{sec:we-now-decompose}
We now decompose the diffusivity of \eqref{eq:83} into a static and dynamic contribution. 
To do so, we first show in the next statement that such a decomposition is available for 
the distribution $U$ in~\cref{prp:3}. 

\begin{proposition}\label{prp:1}
For all $\kappa > 0$, $t \in \kappa\bbN$ and $f\in C^\infty_p(F_\infty)$, it holds
\begin{equation}\label{eq:31}
U_t[f] = \frac{t}{\kappa} U_\kappa[f] + V_{t, \kappa}[f]\,,
\end{equation}
where $U$ is defined according to \cref{prp:3} and $V_{t,\kappa}\in (C^\infty_p(F_\infty))'$ is given by 
\begin{align}\label{eq:44}
V_{t, \kappa}[ f ] \eqdef \sum_{s \in \kappa \bbN \cap [0, t)} \int f(x) \bbE_\gamma[J( (\eta_{r})_{r \in [0, \kappa]}, 0) J((\eta_r)_{r \in [\kappa, s+\kappa]}, -x) ] \dd x\,.
\end{align}
\end{proposition}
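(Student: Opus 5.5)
We would prove \eqref{eq:31} by telescoping the current in time and using time stationarity of the dynamical bead model. Write $t = n\kappa$ and use additivity of the current: for every $x$, pathwise,
$$ J((\eta_r)_{r\in[0,t]},0) = \sum_{m=0}^{n-1} J((\eta_r)_{r\in[m\kappa,(m+1)\kappa]},0), $$
which follows from \eqref{eq:21} since the indicator differences telescope. The sum is finite a.s. by \cref{prp:momentsJ}. Inserting this into $u^{x_0}_t(x)=\bbE_\gamma[J((\eta_s)_{s\in[0,t]},0)\ch^{x_0}(\eta_0,x)]$ gives $n$ terms. In the $m$-th term we replace $\ch^{x_0}(\eta_0,x)$ by $\ch^{x_0}(\eta_{m\kappa},x)$ plus the correction
$$ \ch^{x_0}(\eta_0,x)-\ch^{x_0}(\eta_{m\kappa},x) = J((\eta_r)_{r\in[0,m\kappa]},x) - J((\eta_r)_{r\in[0,m\kappa]},x_0), $$
using the identity in \cref{def:current}.

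For the main part, stationarity in time (\cref{cor:1}) gives
$$ \bbE_\gamma\big[J((\eta_r)_{r\in[m\kappa,(m+1)\kappa]},0)\,\ch^{x_0}(\eta_{m\kappa},x)\big]=u^{x_0}_\kappa(x). $$
Summing over $m$ produces $n\,u^{x_0}_\kappa = \frac t\kappa u^{x_0}_\kappa$. For the corrections, we apply time reversibility of the stationary process, also from \cref{cor:1}, on the window $[0,(m+1)\kappa]$. This turns $\bbE_\gamma[J_{[m\kappa,(m+1)\kappa]}(0)\,J_{[0,m\kappa]}(x)]$ into a product of the current over $[0,\kappa]$ at $0$ with the current over $[\kappa,(m+1)\kappa]$ at $x$. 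Reversal flips the sign of each current, so the product is unchanged. With $s=m\kappa$ this matches the summand of \eqref{eq:44}, up to the sign conventions in $x\mapsto -x$ coming from the definition $U[f]=\int f(x)u(-x)\,\dd x$, which need to be tracked carefully.

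The pinning contributions $J(\cdot,x_0)$ do not depend on $x$. We handle them in one of two ways. For $f$ of zero mass they integrate to zero. More generally, they vanish in the limit $x_0\to\infty$ defining $u_t$ in \cref{prp:3}: by reversibility and the argument of \eqref{eq:DecayCross}, namely localisation via \eqref{eq:9}, \cref{prp:12}, and the fact that currents far away decorrelate, we get $\bbE_\gamma[J_{[0,\kappa]}(0)J_{[\kappa,s+\kappa]}(x_0)]\lesssim e^{-C(\log|x_0|)^2}$. Taking $x_0\to\infty$ and using \eqref{eq:97} then yields \eqref{eq:31}.

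The main obstacle is the step that moves from $u^{x_0}$ to $u$ and justifies testing against $f\in C^\infty_p$. For this we need a decay bound on $x\mapsto\bbE_\gamma[J_{[0,\kappa]}(0)J_{[\kappa,s+\kappa]}(-x)]$ that is faster than any polynomial, so that $V_{t,\kappa}$ is a well-defined element of $(C^\infty_p)'$ and dominated convergence applies. We would prove this exactly as \eqref{eq:DecayCross}. Both currents are local; via \cref{thm:prob.decay} and \cref{thm:annulus} we replace the dynamics by the annulus dynamics, under which the two currents are conditionally independent given $\eta_0$. Each has conditional mean zero, because stationarity of the truncated dynamics makes the mean current vanish. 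This gives the bound $Ce^{-C(\log|x|)^2}$, with the moment bound \eqref{eq:3} and Cauchy--Schwarz controlling the error terms.
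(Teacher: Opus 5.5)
Your proposal is correct and follows essentially the paper's proof. Both arguments telescope the current over the mesh $\kappa\bbN$, split $\ch^{x_0}(\eta_0,-x)$ into $\ch^{x_0}(\eta_s,-x)$ plus current corrections, use stationarity for the main term and reversibility for the cross term, and remove the $x$-independent pinning term via decorrelation of distant currents (the paper truncates at $L=2|x_0|$ and invokes \cref{lem:4}). The one structural difference is that the paper proves the identity for $f\in C^\infty_c$ first and then reads off the super-polynomial decay of $V_{t,\kappa}$ from that of $U_t$ and $U_\kappa$, so your separate decay estimate for $V_{t,\kappa}$ is unnecessary rather than the main obstacle.

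One correction to your annulus argument: condition on the frozen configuration in $(F_{L/4}\setminus F_{L/5})\cup F_L^\complement$, as in \cref{lem:4}, not on all of $\eta_0$. Given the full $\eta_0$ the current is not mean zero. Given only the frozen part, the Gibbs property makes the inner dynamics stationary, and the two currents remain conditionally independent.
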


We see that as $\kappa \rightarrow 0$, the distribution $U_\kappa$ depends on the process in a vanishing time interval, and as such, we refer to $\frac{t}{\kappa} U_\kappa$ as the static term. On the other hand, $V_{t, \kappa}$ remains dependent on the process over the full interval $s \in [0, t]$, therefore we refer to it as the dynamic term. 

Before proving \cref{prp:1}, we prove the following Lemma. In spirit, it is similar to the bound \eqref{eq:DecayCross}, 
but we allow both functionals to depend on the path.
Ultimately, this is the reason why the annulus dynamics was introduced. 

\begin{lemma}\label{lem:4}
For $T \ge 0$ and $L \in \bbN$, let $K$ be a measurable function on $C([0, T], \OmegaBar(F_L \setminus F_{L/3}))$. Then, for any time interval $0 \le s \le t \le T$, it holds
$$ |\bbE_\gamma[J((\eta^L_r)_{r \in [s,t]}, 0 ) K( (\eta^L_r\restr_{F_L \setminus F_{L/3}})_{r \in [0, T]} ) ]| \lesssim e^{-C \log(L)^2} (\bbE_\gamma[ |K|^4 ])^{1/4} \,. $$
\end{lemma}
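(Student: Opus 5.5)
The plan is to replace $\eta^L$ by the annulus dynamics $\etaTil^L$ of \eqref{equ:TrAn}, for which the central box and the outer shell evolve conditionally independently given $y_0$. Then we use stationarity of the inner dynamics to show that the mean current vanishes conditionally on the outer data. Write $J^L \eqdef J((\eta^L_r)_{r\in[s,t]},0)$, $\JTil^L \eqdef J((\etaTil^L_r)_{r\in[s,t]},0)$, $K \eqdef K((\eta^L_r\restr_{F_L\setminus F_{L/3}})_{r\in[0,T]})$ and $\KTil \eqdef K((\etaTil^L_r\restr_{F_L\setminus F_{L/3}})_{r\in[0,T]})$. Since $J$ at the origin depends only on the path in a neighbourhood of $0$, contained in $F_{L/6}$, \cref{thm:annulus} gives $J^L=\JTil^L$ and $K=\KTil$ outside an event $E$ with $\bbP_\gamma(E)\le Ce^{-C(\log L)^2}$. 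On $E$ we bound by Hölder:
\[
|\bbE_\gamma[(J^LK-\JTil^L\KTil)\ic_E]|\le \bbP_\gamma(E)^{1/2}\bigl(\bbE_\gamma[|J^L|^4+|\JTil^L|^4]\bigr)^{1/4}\bigl(\bbE_\gamma[|K|^4+|\KTil|^4]\bigr)^{1/4}.
\]
The current moments are bounded uniformly in $L$ by the number of active beads in $\{0\}\times[-L,L]$ (resp.\ $[-L/5,L/5]$), whose moments are controlled via \cref{prp:11}. In fact a direct bound as in the proof of \eqref{eq:3} applies to the truncated dynamics. Stationarity of $\bfP_\gamma$ under both the truncated and the annulus dynamics gives $\bbE_\gamma|\KTil|^4=\bbE_\gamma|K|^4$ only up to an $\ic_E$ correction. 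So I would use a Hölder split of the form $\bbE[|\KTil|^4]\le 2\bbE[|K|^4]+\bbE[|\KTil|^4\ic_E]$. If the latter cannot be controlled directly, I would restate the reduction with $K$ only and handle $\JTil^L K$ instead of $\JTil^L\KTil$, using the $K$-replacement only inside the conditional argument below.

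The main step is to show that $\bbE_\gamma[\JTil^L\KTil]=0$. Condition on $\calF_0\eqdef\sigma(y_0)$. The annulus $F_{L/4}\setminus F_{L/5}$ is frozen, so given $y_0$ the inner process on $\FTil^0_L$ and the outer process on $\FTil^1_L$ are driven by disjoint families of Brownian motions and have disjoint setups. Hence they are conditionally independent. $\KTil$ is a functional of the outer process, together with frozen beads in $F_{L/4}\setminus F_{L/3}$ that are $\calF_0$-measurable. Therefore
\[
\bbE_\gamma[\JTil^L\KTil]=\bfE_\gamma\bigl[\bbE[\JTil^L\mid y_0]\,\bbE[\KTil\mid y_0]\bigr].
\]
Further condition on $\omega\restr_{(\FTil^0_L)^c}$. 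By the Gibbs property \eqref{itm:bead-gibbs}, the conditional law of $\omega\restr_{\FTil^0_L}$ is uniform on the interlacing-compatible configurations. This is stationary for the inner reflected dynamics, by \cref{prp:5} applied to the setup on $\FTil^0_L$. The quantity $\bbE[\KTil\mid y_0]$ is a function of $y_0\restr_{(\FTil^0_L)^c}$ only, since the outer setup does not see the inner beads. Thus it suffices to show that $\bbE[\JTil^L\mid y_0\restr_{(\FTil^0_L)^c}]=0$. For this, pin the height at a point $x_1$ in the frozen annulus and write $\JTil^L=\ch^{x_1}(\etaTil^L_s,0)-\ch^{x_1}(\etaTil^L_t,0)$, exactly as in the proof of \eqref{eq:DecayCross}. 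Stationarity of the conditional uniform measure then gives equal conditional expectations of the two heights. Integrability holds because $|\ch^{x_1}|$ is bounded by the finite number of inner beads.

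The step I expect to be the main obstacle is the bookkeeping of conditional independence, namely making sure that $\KTil$ is genuinely measurable with respect to the outer noise and $y_0\restr_{(\FTil^0_L)^c}$. Here one must check that the region $F_L\setminus F_{L/3}$ together with the active set $Q_{\FTil^1_L}$ and the boundary parameters $M^\pm$ from \eqref{eq:29} do not depend on beads inside $F_{L/5}$. This holds since $M^\pm_F(\fraki)$ for outer $\fraki$ involves only frozen or outer beads, because the static annulus separates them. A secondary nuisance is the uniform $L^4$ bound on the currents. Combining the two steps yields the claimed bound $e^{-C(\log L)^2}(\bbE_\gamma|K|^4)^{1/4}$.
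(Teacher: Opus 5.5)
Your strategy is the same as the paper's. You replace $\eta^L$ by the annulus dynamics $\etaTil^L$. You show $\bbE_\gamma[\JTil^L\KTil]=0$ from two facts: the two active regions are conditionally independent, and the inner dynamics is stationary under the Gibbs-uniform conditional law, with the height pinned in the frozen annulus. You then pay for the replacement by H\"older on the bad event $E$ of \cref{thm:annulus}. Your two-step conditioning (first on $y_0$, then on $y_0\restr_{(\FTil^0_L)^\complement}$) is a slightly more careful version of the paper's single conditioning on the frozen configuration $Z$, and it is fine. For the factorisation you should add one remark: inner--outer neighbouring pairs do belong to the constraint set, but those constraints are never active. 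The geometric barriers in $M^\pm_F$ keep inner beads at $|y|\le L/5$ and outer beads on adjacent lines at $|y|\ge L/4$, so the Skorokhod problem splits. A minor slip: $F_L\setminus F_{L/3}$ lies inside $\FTil^1_L$, so there are no frozen beads there to track.

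The step you flagged, however, is left open, and it is a genuine gap in your write-up. The H\"older bound on $E$ produces $(\bbE_\gamma|\KTil|^4)^{1/4}$. For an arbitrary measurable $K$, nothing controls $\bbE_\gamma[|\KTil|^4\ic_E]$ by $\bbE_\gamma|K|^4$, because $K$ and $\KTil$ are the same functional evaluated on two processes with different laws. Your fallback does not close the gap. The product $\JTil^L K$ is not covered by the conditional-independence argument, since $K$ sees the inner noise through $\eta^L$. Writing $K=\KTil+(K-\KTil)\ic_E$ just brings $\KTil$ back. The paper's own proof passes over the same point in its final display; in the paper's applications $K$ is either bounded by $L^N$ or a current with polynomial moments for both dynamics, so the gap is harmless there.

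A clean fix is truncation. Set $M\eqdef(\bbE_\gamma|K|^4)^{1/4}e^{c(\log L)^2}$, with $c$ smaller than half the exponent in \cref{thm:annulus}.
\begin{itemize}
\item For the tail: $|\bbE_\gamma[J^LK\ic_{\{|K|>M\}}]|\le\bbE_\gamma[(J^L)^2]^{1/2}\,\bbE_\gamma[K^4]^{1/2}/M$.
\item For $K\ic_{\{|K|\le M\}}$ your argument goes through verbatim, because its annulus counterpart $\KTil\ic_{\{|\KTil|\le M\}}$ is also bounded by $M$. The error is then $\lesssim\bbP_\gamma(E)^{1/2}\bigl(\bbE_\gamma[|J^L|^4+|\JTil^L|^4]\bigr)^{1/4}M$.
\end{itemize}
Both terms are $\lesssim e^{-c'(\log L)^2}(\bbE_\gamma|K|^4)^{1/4}$. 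Your bound on the current by the number of beads in $\{0\}\times[-L,L]$ is not uniform in $L$ as you state, but it grows only polynomially, and that is absorbed.
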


\begin{proof}
Let $\etaTil^L$ be the annulus dynamics from \cref{sec:truncated-dynamics}, and write
\begin{align}
\JTil \eqdef J((\etaTil^L_r)_{r \in [s,t]}, 0), \quad \KTil \eqdef K((\etaTil^L_r\restr_{F_L \setminus F_{L/3}})_{r \in [0,T]}).
\end{align}
We condition on the configuration in the static region,
$$ Z \eqdef \eta_0\restr_{(F_{L/4} \setminus F_{L/5}) \cup F_L^\complement}. $$
Conditional on $Z$, the dynamics of $\etaTil^L$ in the two disjoint active regions $F_{L/5}$ and $F_L \setminus F_{L/4}$ are independent. Therefore, $\JTil$ and $\KTil$ are conditionally independent, and
$$ \bbE_\gamma[\JTil \KTil | Z ] = \bbE_\gamma[\JTil | Z ] \bbE_\gamma[\KTil | Z ]. $$
By stationarity of the inner dynamics conditional on $Z$, $\bbE_\gamma[\JTil | Z ] = 0$, and thus $\bbE_\gamma[\JTil \KTil] = 0$.

Now let
$$ E \eqdef \{(\eta^L_r\restr_{G_L})_{r \in [0,T]} = (\etaTil^L_r\restr_{G_L})_{r \in [0,T]}\}. $$
On $E$, the locality of the current and the measurability assumption on $K$ imply
$$ J((\eta^L_r)_{r \in [s,t]}, 0) K((\eta^L_r\restr_{F_L \setminus F_{L/3}})_{r \in [0,T]}) = \JTil \KTil. $$
Hence, by H\"older's inequality, \cref{thm:annulus}, and the uniform fourth moment bound for the current,
\begin{align}
&\left|\bbE_\gamma\Big[J((\eta^L_r)_{r \in [s,t]}, 0) K((\eta^L_r\restr_{F_L \setminus F_{L/3}})_{r \in [0,T]})\Big]\right| \\
&\qquad \le \left|\bbE_\gamma\Big[\ic_{E^\complement}J((\eta^L_r)_{r \in [s,t]}, 0) K((\eta^L_r\restr_{F_L \setminus F_{L/3}})_{r \in [0,T]})\Big]\right| + |\bbE_\gamma[\ic_{E^\complement}\JTil \KTil]| \\
&\qquad \lesssim e^{-C \log(L)^2} (\bbE_\gamma[|K|^4])^{1/4},
\end{align}
which proves the claim.
\end{proof}

In the following proof we make use of the dynamically pinned height function. 
For an $\OmegaBar$-valued process $(\eta_t)_{t \ge 0}$, define the dynamic height function 
$\ch_t(x) = \ch(\eta_0,x) - J( (\eta_{r})_{r \in [0, t]}, x)$. In addition, for $x_0 \in F_\infty$, 
its pinned to $x_0$ at time zero version is $\ch^{x_0}_t(x) = \ch^{x_0}(\eta_0,x) - J( (\eta_{r})_{r \in [0, t]}, x)$.

\begin{remark}
The difference between the dynamic height function $\ch_t$ and the height function $\ch$ that was defined in \cref{def:Height} is given by the current at the pinned point: for all $x \in F_\infty$
$$ \ch^{x_0}_s(x) - \ch^{x_0}(\eta_s, x) = -J((\eta_r)_{r \in [0, s]}, x_0). $$
In particular, it is independent of $x$ and the height gradients of the two coincide.
\end{remark}

\begin{proof}[Proof of \cref{prp:1}]
It is enough to prove the statement for $f \in C^\infty_c(F_\infty)$. 
In particular, \eqref{eq:31} implies that $V_{t, \kappa}$ shares the same super-polynomial decay as $U$.  

By \eqref{eq:21}, we can write
$$ J((\eta_s)_{s \in [0, t]}, 0) = \sum_{s \in \kappa \bbN \cap [0, t)} J((\eta_{s + r})_{r \in [0, \kappa]}, 0)\,. $$
Therefore, for a test function $f \in C^\infty_c(F_\infty)$ and for $x_0 \in F_\infty$, we get
\begin{align}
U^{x_0}_t[f] &= \sum_{s \in \kappa \bbN \cap [0, t)} \int f(x) \bbE_\gamma[J((\eta_{s + r})_{r \in [0, \kappa]}, 0) \ch^{x_0}(\eta_0, -x) ] \dd x \\
&= \sum_{s \in \kappa \bbN \cap [0, t)} \int f(x) \bbE_\gamma[J((\eta_{s + r})_{r \in [0, \kappa]}, 0) \ch^{x_0}_0(-x) ] \dd x \\
&= \sum_{s \in \kappa \bbN \cap [0, t)} \int f(x) \bbE_\gamma[J((\eta_{s + r})_{r \in [0, \kappa]}, 0) (\ch^{x_0}_0(-x) - \ch^{x_0}_s(-x)) ] \dd x \label{eq:24} \\
&\qquad + \sum_{s \in \kappa \bbN \cap [0, t)} \int f(x) \bbE_\gamma[J((\eta_{s + r})_{r \in [0, \kappa]}, 0) \ch^{x_0}_s(-x) ] \dd x \,.\label{eq:25}
\end{align}
First we re-express \eqref{eq:24}. We have
\begin{align}
\bbE_\gamma[J((\eta_{s + r})_{r \in [0, \kappa]}, 0) (\ch^{x_0}_0(-x) - \ch^{x_0}_s(-x)) ] &= \bbE_\gamma[J( (\eta_{s+r})_{r \in [0, \kappa]}, 0) J((\eta_r)_{r \in [0, s]}, -x) ] \\
&= \bbE_\gamma[J( (\eta_{r})_{r \in [0, \kappa]}, 0) J((\eta_r)_{r \in [\kappa, s+\kappa]}, -x) ] \label{eq:4}
\end{align}
where we used reversibility from \cref{cor:1}. Time reversal changes the sign of each current, so their product keeps its sign.
For \eqref{eq:25}, we have
\begin{align}
\bbE_\gamma[J((\eta_{s + r})_{r \in [0, \kappa]}, 0) \ch^{x_0}_s(-x) ] =& \bbE_\gamma[J((\eta_{s + r})_{r \in [0, \kappa]}, 0) \ch^{x_0}(\eta_s, -x) ] \\
& + \bbE_\gamma[J((\eta_{s + r})_{r \in [0, \kappa]}, 0) (\ch^{x_0}_s(-x) - \ch^{x_0}(\eta_s, -x)) ]\,.
\end{align}
For the first summand, time-stationarity gives 
$$ \bbE_\gamma[J((\eta_{s + r})_{r \in [0, \kappa]}, 0) \ch^{x_0}(\eta_s, -x) ] = \bbE_\gamma[J((\eta_{r})_{r \in [0, \kappa]}, 0) \ch^{x_0}(\eta_0, -x) ] $$
and it further holds that the following difference in the second is independent of $x$
$$ \ch^{x_0}_s(-x) - \ch^{x_0}(\eta_s, -x) = -J((\eta_r)_{r \in [0, s]}, x_0). $$

Combining the above, we deduce 
$$ U^{x_0}_t[f] = \frac{t}{\kappa} U^{x_0}_\kappa[f] + V_{t, \kappa}[f] - \Big(\int f \dd x\Big) \sum_{s \in \kappa \bbN \cap [0, t)}\bbE_\gamma[J((\eta_{s+r})_{r \in [0, \kappa]}, 0) J((\eta_r)_{r \in [0, s]}, x_0) ]. $$
The result follows provided we show that 
\begin{equation}\label{eq:26}
|\bbE_\gamma[J((\eta_{s+r})_{r \in [0, \kappa]}, 0) J((\eta_r)_{r \in [0, s]}, x_0) ]| \lesssim e^{- C (\log |x_0|)^2}
\end{equation}
holds for some $C > 0$, and then sending $x_0 \rightarrow \infty$. To see the above, one has to first 
replace the dynamics $\eta$ with the truncated dynamics $\eta^L$ at scale $L = 2 |x_0|$, 
which gives the same decay as that at the r.h.s.\ of \eqref{eq:26} because $J$ has all moments by \cref{prp:momentsJ} 
and by \cref{cor:1}. Then, invoke \cref{lem:4}.
\end{proof}

As a consequence of \cref{prp:1}, we deduce the claimed decomposition of the diffusivity. 

\begin{proposition}\label{prp:DiffuDecom}
Let $D$ be the diffusivity in~\eqref{eq:83} and $a=a(\gamma)$ the matrix in \eqref{eq:a.bead}. For every $t > 0$, the following identity holds
\begin{equation}\label{eq:10}
D_{IJ}(t) = (a_{IJ} + a_{JI}) \lim_{R\rightarrow\infty} (\kappa^{-1} U^{x_0}_\kappa[ \chi_R ] + t^{-1} V_{t, \kappa} [ \chi_R ])
\end{equation}
in which $U^{x_0}$ and $V$ are respectively given as in~\eqref{eq:22} and~\eqref{eq:44}, 
$\kappa = \kappa(R)=t/\lceil t R^{100}\rceil$, $x_0 = (2R, 2R)$ and, for $R \in \bbN$, $\chi_R$ is the cutoff function
\begin{equation}\label{eq:moll}
\chi_R(\frakl,y) \eqdef \ic\{ |\frakl| \le R \} \big(\rho * \ic_{[-R-\thalf,R+\thalf]}\big)(y)
\end{equation}
for an even, non-negative $\rho \in C^\infty_c(\bbR)$ satisfying $\supp (\rho) \subset (-\thalf, \thalf)$ and $\int \rho(y) \, \dd y = 1$. Thus $\chi_R(-x) = \chi_R(x)$ and $\ic_{F_R} \le \chi_R \le \ic_{F_{R+1}}$. The choice of $\kappa$ ensures $t/\kappa \in \bbN$ and 
$\kappa =O( R^{-100})$.
\end{proposition}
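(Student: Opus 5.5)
We start from \cref{def:Diffu}, $D_{IJ}(t)=t^{-1}S_t[f_{IJ}]=t^{-1}\sum_{i,j}a_{ij}U_t[\nabla_j\nabla_i^- f_{IJ}]$, and first compute the second differences of the quadratic $f_{IJ}(x)=x_Ix_J$. Since $\nabla_1^-$ is a backward difference in $\frakl$, $\nabla_1$ a forward difference, and $\nabla_2$ an ordinary derivative in $y$, each $\nabla_j\nabla_i^- f_{IJ}$ is a constant, namely $\ic\{\{i,j\}=\{I,J\}\}(1+\ic\{I=J\})$. Thus $\sum_{ij}a_{ij}\nabla_j\nabla_i^-f_{IJ}=(a_{IJ}+a_{JI})\cdot 1$ for $I\neq J$, and $2a_{II}$ for $I=J$. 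In either case this is $(a_{IJ}+a_{JI})$ times the constant function $1$. This gives $D_{IJ}(t)=(a_{IJ}+a_{JI})\,t^{-1}U_t[1]$. The action of $U_t$ on $1\in C^\infty_p$ is legitimate by the decay \eqref{eq:97}. By dominated convergence, again using \eqref{eq:97}, $U_t[1]=\lim_{R\to\infty}U_t[\chi_R]$, since $\chi_R\to1$ pointwise with $|\chi_R|\le1$.

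Next we apply \cref{prp:1} with $\kappa=\kappa(R)$; the choice $\kappa=t/\lceil tR^{100}\rceil$ ensures $t\in\kappa\bbN$. This gives $t^{-1}U_t[\chi_R]=\kappa^{-1}U_\kappa[\chi_R]+t^{-1}V_{t,\kappa}[\chi_R]$. It remains to replace $U_\kappa$ by $U^{x_0}_\kappa$ with $x_0=(2R,2R)$, with an error that vanishes after multiplication by $\kappa^{-1}\sim R^{100}$. From \eqref{eq:97},
\begin{equation}
|U_\kappa[\chi_R]-U^{x_0}_\kappa[\chi_R]|\le \|\chi_R\|_{L^1}\sup_{x\in F_{R+1}}|u_\kappa(x)-u^{x_0}_\kappa(x)|\lesssim R^2 e^{-C(\log R)^2},
\end{equation}
uniformly in $\kappa\le T$. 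Since $e^{-C(\log R)^2}$ beats any power, we get $R^{100}\cdot R^2e^{-C(\log R)^2}\to0$. Note that \eqref{eq:97} is only stated for $|x_0|$ large, but $|x_0|\asymp R$, so it applies.

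Combining the three steps yields \eqref{eq:10}. The limit exists because the left side, $t^{-1}U_t[\chi_R]$, converges.

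The main point to check carefully is the uniformity of the constants in \cref{prp:3} for times $\kappa\in(0,T]$. That proposition gives $C=C(\gamma,T)$ uniform over $t\in[0,T]$, so this holds. A secondary check is the exact sign and index bookkeeping in $\nabla_j\nabla_i^-f_{IJ}$ in the discrete direction. For example, with $I=J=1$ we have $f=\frakl^2$, so $\nabla_1^-f=2\frakl-1$ and $\nabla_1\nabla_1^-f=2=a_{11}+a_{11}$ up to the factor $a$. The cross terms with $i\ne j$ are constant and nonzero only when $\{i,j\}=\{I,J\}$.
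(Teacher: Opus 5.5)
Your proposal is correct and follows the paper's proof essentially step by step: you reduce to $(a_{IJ}+a_{JI})\,t^{-1}U_t[1]$ via $\nabla_j\nabla_i^- f_{IJ}=\delta_{iI}\delta_{jJ}+\delta_{iJ}\delta_{jI}$, approximate $1$ by $\chi_R$ using the decay of $u_t$, apply \cref{prp:1}, and swap $U_\kappa$ for $U^{x_0}_\kappa$ at a cost $\lesssim R^2e^{-C(\log R)^2}$, which still vanishes after multiplication by $\kappa^{-1}=O(R^{100})$. Your remarks on the uniformity of the constant for $\kappa\in(0,t]$ and on the existence of the limit make explicit what the paper leaves implicit.
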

\begin{proof}
For $f_{IJ}(x) = x_I x_J$, we have $\nabla_j\nabla_i^- f_{IJ} = \delta_{iI}\delta_{jJ} + \delta_{iJ}\delta_{jI}$. Hence \eqref{eq:30} and \eqref{eq:83} give
$$ D_{IJ}(t) = (a_{IJ} + a_{JI}) t^{-1} U_t[1]. $$
By the decay in \eqref{eq:82}, $U_t[1] = \lim_{R\rightarrow\infty} U_t[\chi_R]$.
Thanks to \cref{prp:1} we have
$$ U_t[ \chi_R ] = \frac{t}{\kappa} U_\kappa[ \chi_R ] + V_{t, \kappa}[ \chi_R ] $$
and since, by \eqref{eq:97}, we know that 
$$ | U_\kappa[ \chi_R ] -  U^{x_0}_\kappa[ \chi_R ] | \le C e^{- C \log(|x_0|)^2} \int |\chi_R| \dd x  $$
and the r.h.s. goes to zero as $R \rightarrow \infty$ even after multiplication by $t\kappa^{-1}$, the statement follows. 
\end{proof}

Thanks to~\cref{prp:DiffuDecom},~\cref{thm:GK} follows directly from the next statement which 
separately analyses the two terms at the r.h.s. of~\eqref{eq:10}. 

\begin{proposition}\label{prp:GKsep}
For $t > 0$, recall the definitions of $\chi_R$ in~\eqref{eq:moll},
$U^{x_0}_t$ in~\eqref{eq:22} and $V_{t,\kappa}$ in~\eqref{eq:44}. Then, upon choosing $\kappa=\kappa(R)= t/\lceil t R^{100}\rceil$
and $x_0=(2R,2R)$,
the following limits hold 
\begin{align}
\lim_{R\rightarrow\infty} \kappa^{-1} U^{x_0}_\kappa[ \chi_R ] &= \frac{1}{2\pi}\,,\label{eq:static}\\
\lim_{R \rightarrow \infty} V_{t, \kappa} [ \chi_R ] &= 0\,. \label{eq:dynamic}
\end{align}
\end{proposition}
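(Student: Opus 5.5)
For \eqref{eq:static} I would write the static term as an increment correlation and use reversibility to turn it into a carré du champ. For \eqref{eq:dynamic} I would show that the integrated current over the box $F_R$ is, up to boundary terms, a sum of future Brownian increments. Throughout, I would first replace $\eta$ by the truncated dynamics $\eta^{L}$ with $L$ a fixed multiple of $R$, say $L=8R$. The error is $e^{-C(\log R)^2}$ by \eqref{eq:9} and the moment bounds of \cref{prp:momentsJ}, and it survives multiplication by the polynomial factors $\kappa^{-1}\sim R^{100}$ and $|F_R|\sim R^2$. For $\eta^L$ we have finitely many active particles and may use Itô calculus for the finite reflected system \eqref{eq:38}, with $\bfP_\gamma$ reversible by \cref{prp:5}.

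\textbf{Static term.} Pin at $x_1\notin F_L$ and set $F(\eta)=\ch^{x_1}(\eta,0)$ and $G(\eta)=\int\chi_R(x)\ch^{x_0}(\eta,-x)\,\dd x$. Then $J((\eta^L_r)_{r\in[0,\kappa]},0)=F(\eta^L_0)-F(\eta^L_\kappa)$. By reversibility,
$$\bbE_\gamma[(F(\eta_0)-F(\eta_\kappa))G(\eta_0)]=\tfrac12\,\bbE_\gamma\big[(F(\eta_\kappa)-F(\eta_0))(G(\eta_\kappa)-G(\eta_0))\big].$$
Since $F$ is a step function, I would replace it by a mollified version $F_\varepsilon$, in which the indicator $\ic\{Y(\fraki)\ge 0\}$ is smoothed on scale $\varepsilon$. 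Then $F_\varepsilon$ and $G$ are smooth functions of the finitely many coordinates $Y(\fraki)$, with $\partial_{\fraki}G=\chi_R(-(\frakl(\fraki),Y(\fraki)))$ up to sign. Itô's formula makes the product of increments equal to $\int_0^\kappa\sum_{\fraki}\partial_\fraki F_\varepsilon\,\partial_\fraki G\,\dd r$, plus products involving finite-variation (local time and drift) parts. The latter are $o(\kappa)$ in $L^1$, because the local time accumulated over $[0,\kappa]$ by particles near the origin is of order $\kappa$ only on an event of probability $O(\kappa^{1/2-\theta})$, by \cref{lem:2} and \cref{lem:1}.

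Stationarity then gives $\kappa^{-1}\cdot\tfrac12\cdot\kappa\,\bbE_\gamma[\sum_{\fraki:\frakl(\fraki)=0}\rho_\varepsilon(Y(\fraki))\chi_R(0,-Y(\fraki))]$. As $\varepsilon\to0$ this tends to $\tfrac12\cdot\frac1\pi\cdot\chi_R(0)=\frac1{2\pi}$, using the one-point density $1/\pi$. The signs match: an upward crossing lowers both $F$ and $G$. The delicate point is uniformity. I must take $\varepsilon\to0$ before $R\to\infty$, while $\kappa=\kappa(R)\to0$. I would therefore bound the error terms explicitly as $C\kappa^{1/2-\theta}\,\mathrm{poly}(R)$, or better, localise $G$ to particles within $\kappa^{1/2-\theta}$ of the origin on line $0$. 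Such a localisation is available because $J$ vanishes unless some particle is that close (the event $B_\varepsilon$ argument in \cref{prp:momentsJ}). This gives an error $O(\kappa^{1/2-\theta})$ that is independent of $R$.

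\textbf{Dynamic term.} Let $\Phi_\frakl(y)=\int_{-\infty}^y\chi_R(\frakl,-z)\,\dd z$, normalised by subtracting a constant. Since the current counts signed crossings, for each $s$ we have
$$\int\chi_R(x)\,J((\eta_r)_{r\in[\kappa,s+\kappa]},-x)\,\dd x=\sum_\fraki\big[\Phi_{\frakl(\fraki)}(Y_{s+\kappa}(\fraki))-\Phi_{\frakl(\fraki)}(Y_\kappa(\fraki))\big].$$
By Itô for \eqref{eq:38}, this equals the martingale part $\sum_\fraki\int_\kappa^{s+\kappa}\chi_R\,\dd W(\fraki)$, plus three further contributions:
\begin{itemize}
\item the Itô correction $\tfrac12\sum_\fraki\int\partial_y\chi_R\,\dd r$, which is supported where particles are at distance about $R$ from the origin;
\item local-time terms $\tfrac12\sum_{(\fraki,\frakj)}\int[\chi_R(\cdot,Y(\fraki))-\chi_R(\cdot,Y(\frakj))]\,\dd L(\fraki,\frakj)$, which cancel exactly for pairs in the interior because reflections push the two particles oppositely with equal weight, so only pairs straddling $\partial F_R$ survive;
\item reflections against the frozen boundary of $F_L$, which lie far outside the support of $\chi_R$.
\end{itemize}
The martingale part is orthogonal to $J((\eta_r)_{r\in[0,\kappa]},0)$, which is $\calF_\kappa$-measurable. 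The boundary contributions are functionals of the path in $F_{R+1}\setminus F_{R-1}$, up to a $e^{-C(\log R)^2}$ error on the event that no particle travels distance $R/100$ (\cref{lem:2}). Their correlation with $J(\cdot,0)$ is therefore bounded by $e^{-C(\log R)^2}$ via the annulus argument of \cref{lem:4}, applied with the annulus rescaled so that it contains $\partial F_R$. Their fourth moments grow only polynomially in $R$, by \eqref{eq:MomQL} and local-time moment bounds. Summing over the $t/\kappa\sim R^{100}$ values of $s$ still gives something that tends to $0$.

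I expect the main obstacle to be this last step. It requires controlling the moments of the boundary local-time terms polynomially in $R$, and checking that \cref{lem:4} applies to functionals supported near $\partial F_R$ rather than near $\partial F_L$. I would handle the second point by choosing $L=R$ up to constants, so that $\partial F_R$ falls in $F_L\setminus F_{L/3}$ as the lemma requires.
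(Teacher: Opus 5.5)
Your dynamic-term argument is essentially the paper's. The Brownian part of the integrated current over $[\kappa,s+\kappa]$ is orthogonal to the $\calF_\kappa$-measurable $J_0$, and interior collision local times cancel. The surviving drift and local-time terms near $\partial F_R$ are then decorrelated from $J_0$ by \cref{lem:4} once $L\asymp R$; the paper takes $R=L/2$. Applying It\^o to $\Phi_\frakl(Y(\fraki))$ is a tidy replacement for the paper's bookkeeping with $Q'_{L/2}$.

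The static term, however, has a genuine gap. After the (correct) reversibility identity, everything hinges on showing that the finite-variation part $A^G_\kappa$ of $G(\eta_\kappa)-G(\eta_0)$ contributes $o(\kappa)$ when paired with the current. But $A^G_\kappa$ does not live near the origin. It consists of the It\^o drift $\thalf\partial_y\chi_R$ and the collision local times of pairs straddling $\partial F_R$ (or, if $\chi_R\equiv 1$ on the active region, the reflections at the walls $M^\pm_L$). Your justification, that beads near the origin rarely accumulate local time, says nothing about these terms. H\"older cannot close the estimate either. $J$ is non-zero with probability of order $\kappa^{1/2}$, while a single boundary local time has $\|L^H_\kappa\|_q$ of order $\kappa^{1/2+1/(2q)}$ under stationarity, so $\|J\|_p\|L^H_\kappa\|_q\approx\kappa$ for every conjugate pair $(p,q)$. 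One boundary pair therefore already gives $O(\kappa)$ rather than $o(\kappa)$, and there are $O(R)$ of them. You would need genuine decorrelation between the origin and $\partial F_R$.

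Localising $G$ near the origin does not remove the issue. Discarding the far part of $G$ requires $\bbE_\gamma[J\,G_{\mathrm{far}}(\eta_0)]=o(\kappa)$, which is the same problem. Two smaller issues:
\begin{enumerate}
\item Mollifying $F$ adds an It\^o drift proportional to $\rho_\varepsilon'$, whose total variation is of order $\varepsilon^{-1}$ times the local time at $0$. Its pairing with the $G$-increment is therefore not controlled in $L^1$ as $\varepsilon\to0$.
\item With $L=8R$ the pinning point $x_0=(2R,2R)$ is active, so $G$ contains the step function $\ch(\eta,x_0)$ and is not smooth in the active coordinates.
\end{enumerate}

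The missing ingredient is an exact identity that eliminates $A^G$. The paper takes $R=L+1$, so that $\vol_R\ch^{x_0}=C-\sum_{\fraki\in Q_L}y(\fraki)$ is linear in the active coordinates. It then uses \cref{prp:5}: the dynamics with drift $-c$ is stationary under $e^{2c\vol_R\ch^{x_0}}\bfP_\gamma$. Differentiating $\bbE^c_\gamma[J]=0$ at $c=0$ gives $U^L_\kappa=\thalf\bbE_\gamma[\sum_{\fraki\in Q_L}W_\kappa(\fraki)J^L_\kappa]$.

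In your framework the same conclusion follows from time reversal. $A^G_\kappa$ is a reversal-invariant additive functional (drift integrals and local times), whereas $J$ changes sign. Hence $\bbE_\gamma[J A^G_\kappa]=0$, and only the martingale part of $G$ survives. From there no mollification is needed:
\begin{enumerate}
\item Compare $J^L_\kappa$ with the free current $\JTil^L_\kappa$ on the event that at most one bead lies in $-1{:}1\times[-2\varepsilon,2\varepsilon]$. H\"older now works, because $\sum_\fraki W_\kappa(\fraki)$ is of size $\sqrt{\kappa}\,\mathrm{poly}(L)$ in every $L^q$ rather than a rare-event quantity.
\item Conclude with $\bbE[W_\kappa(\ic\{y+W_\kappa\ge0\}-\ic\{y\ge0\})]=\kappa p_\kappa(y)$ and the one-point density $1/\pi$.
\end{enumerate}
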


The proof of the previous proposition will occupy the remainder of the section. 
Before that, let us briefly conclude the proof 
of the Green-Kubo formula stated in~\cref{thm:GK}. 

\begin{proof}[Proof of Theorem~\ref{thm:GK}. ]
Substituting \eqref{eq:static} and \eqref{eq:dynamic} into \eqref{eq:10} gives, for every $t > 0$,
$$ D_{IJ}(t) = \frac{a_{IJ} + a_{JI}}{2\pi} = \frac{a_{IJ}}{\pi}, $$
where the last equality uses the symmetry of $a$.
\end{proof}

\subsection{The static term}\label{sec:static-term}
This subsection is devoted to the proof of~\eqref{eq:static}. 
For this, we introduce a perturbation of the finite system \eqref{eq:38} in which each particle receives an additional drift $-c$, for $c \in \bbR$.
One can think of the parameter $c \in \bbR$ as representing an external driving field.

In detail, fix $c \in \bbR$, $T \ge 0$ and recall from \cref{sec:finite-volume-dynam} the notation introduced for the SDE \eqref{eq:38}. 
For $L \in \bbN$ and $y_0 \in D_\infty$ we search for a $\DBar_\calH$-valued continuous semi-martingale $(Y^L_t)_{t \in [0, T]}$
that is a weak solution of the following perturbed system
\begin{equation}\label{eq:16}
\begin{cases}
\dd Y^L_t(\fraki) = \dd \WTil_t(\fraki) - c \, \dd t + \sum_{H \in \calH: \fraki \in Q_H} c(H, \fraki) \dd L_t^H,\quad \fraki\in Q_L\\
Y^L_0=y_0^L,
\end{cases}
\end{equation}
for a family of independent Brownian motions $(\WTil(\fraki) : \fraki \in Q_L)$. 

We solve \eqref{eq:16} through a Girsanov transform. Introduce the tilted probability measure
$$ \dd \bbP^c \eqdef e^{ -c \sum_{\fraki \in Q_L} W_T(\fraki) - \thalf c^2 T |Q_L| } \dd \bbP. $$
Under $\bbP^c$, the semi-martingales $(\WTil(\fraki) : \fraki \in Q_L)$,
$$ \WTil(\fraki) \eqdef (W_t(\fraki) + c t)_{t \in [0, T]}, \qquad \fraki \in Q_L \,,$$
are distributed as an independent family of Brownian Motions.
In particular, defining, as in \cref{sec:finite-volume-dynam}, $Y^L \eqdef \Gamma(y_0^L + W^L)$, we conclude that $Y^L$ is a weak solution to \eqref{eq:16}. 

In addition, we define a tilted measure $\bfP_\gamma^c$ of $\bfP_\gamma$ as follows. For $R \in \bbN$, $x_0 \in F_\infty$, set
\begin{equation}\label{eq:vol} 
\dd \bfP_\gamma^c \eqdef e^{2 c \, \vol_R \ch^{x_0}} \dd \bfP_\gamma / Z^c \quad \text{ in which } \quad \vol_R \, \ch^{x_0} \eqdef \int \chi_R(x) \, \ch^{x_0}(x) \dd x
\end{equation}
and $\chi_R \in C^\infty_c(F_\infty)$ is any cutoff satisfying $\ic_{F_R} \le \chi_R \le \ic_{F_{R+1}}$.
\begin{proposition}\label{prp:5}
If $R \ge L+1$ and $x_0 \notin F_{L+1}$, then for all $c \ge 0$, it holds that $Y^L$ is time-stationary in law under
$$\bbP_\gamma^c \eqdef \bfP_\gamma^c \otimes \bbP^c .$$
\end{proposition}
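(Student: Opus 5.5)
We condition on the frozen configuration outside $F_L$ and show that the conditional law of the active beads, tilted by $e^{2c\,\vol_R \ch^{x_0}}$, is invariant for the drifted reflected diffusion \eqref{eq:16}. Averaging over the outside configuration then gives time-stationarity under $\bbP^c_\gamma$.

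\textbf{Step 1: reduce to the active coordinates.} Since $Q_L$, $M_L$ and the frozen beads are static, it suffices to fix $\omega_0$ outside $F_L$. By item~\eqref{itm:bead-gibbs}, the conditional law of $y=\Pi_{Q_L} y_0$ under $\bfP_\gamma$ is uniform (Lebesgue) on the bounded convex polytope $\DBar_\calH$. We then compute how $\vol_R \ch^{x_0}$ depends on $y$.

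Since $x_0\notin F_{L+1}$, the pinning point lies in a static region. On each row $\frakl$, $\ch^{x_0}(\frakl,\cdot)$ is a step function that drops by one (indexing per \cref{def:Height}) at each bead. Moving an active bead $\fraki$ from $y(\fraki)$ to $y(\fraki)+h$ changes $\ch^{x_0}$ by a constant on an interval of length $|h|$ inside $\{\frakl(\fraki)\}\times[-L,L]$. Because $R\ge L+1$, we have $\chi_R\equiv1$ there.

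Hence $\vol_R\ch^{x_0} = \mathrm{const}(\omega_0) \pm \sum_{\fraki\in Q_L} y(\fraki)$, with the sign fixed by the convention that an upward crossing lowers the height. With the sign conventions of the paper this should give the conditional density
\[
p(y)\propto \exp\Big(-2c\sum_{\fraki\in Q_L} y(\fraki)\Big)\,\ic_{\DBar_\calH}(y).
\]
I would double-check the sign against \cref{def:current}, since it must match the drift $-c$.

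\textbf{Step 2: invariance of $p$ for the drifted reflected Brownian motion.} Under $\bbP^c$, $Y^L$ solves \eqref{eq:16}: a reflected diffusion in $\DBar_\calH$ with generator $\thalf\Delta - c\sum_{\fraki}\partial_{\fraki}$ and normal reflection, i.e.\ Neumann boundary conditions, on each face.

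The drift equals $\thalf\nabla\log p = -c\,\mathbf{1}$, so the generator is $\thalf p^{-1}\nabla\cdot(p\nabla\,\cdot)$. For smooth $f$ with $\bfn_H\cdot\nabla f=0$ on each face, $\int \mathcal L f\,p\,\dd y = \thalf\int_{\partial}p\,\bfn\cdot\nabla f=0$. So $p$ is invariant, and in fact reversible.

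\textbf{Step 3: make the argument rigorous, which is the main obstacle.} The domain $\DBar_\calH$ is a polytope with corners, so the Neumann-domain/core argument is not immediate. I would avoid generator domains and argue through the Skorokhod map.

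First, approximate the corners. By \eqref{eq:47}, started from $p$-a.e.\ point the process never visits $\partial^2\DBar_\calH$, and by \eqref{eq:46} it spends zero time on the boundary.

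Then apply It\^o's formula to $f(Y^L_t)$ for $f\in C^\infty_c(\bbR^{Q_L})$ with $\bfn_H\cdot\nabla f=0$ on each face near the face interiors. The local-time terms vanish, so $\frac{\dd}{\dd t}\bbE[f(Y_t)]=\bbE[\mathcal Lf(Y_t)]$.

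Combined with the divergence-theorem computation of Step 2, this gives $\frac{\dd}{\dd t}\int P_tf\,p=0$ on a class of $f$ large enough to determine measures. Here $P_t$ is the Markov semigroup, which exists by strong uniqueness (\cref{lem:triple}); enough such $f$ can be obtained by smoothing in normal coordinates away from multiple points.

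Alternatively, one can approximate the reflection by a smooth confining penalty potential $\beta_n\,\mathrm{dist}(y,\DBar_\calH)^2$. For that diffusion the measure $p\,e^{-\beta_n\mathrm{dist}^2}$ is exactly invariant, since it is a gradient diffusion with drift $\thalf\nabla\log$ of the density. We then pass to the limit $n\to\infty$ using the continuity of $\Gamma$ and the standard penalization convergence to the Skorokhod solution.

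Finally, integrating over $\omega_0$ and including the normalisation $Z^c$ yields time-stationarity of $Y^L$ under $\bbP^c_\gamma$.
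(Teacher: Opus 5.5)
Your argument is the paper's: the Gibbs property makes the active beads uniform on $\DBar_\calH$ given the frozen configuration, and $\vol_R\ch^{x_0}=C-\sum_{\fraki\in Q_L}y(\fraki)$ (your sign is correct, since moving a bead up lowers $\ch^{x_0}$ on the swept interval); the classical fact that normally reflected Brownian motion with constant drift $v$ in a bounded domain has reversible density proportional to $e^{2v\cdot y}$ then finishes the proof, and the paper simply cites this fact (Burdzy--Chen--Pal, Theorem~2.3(iii)) instead of re-deriving it. If you want to prove that fact yourself, use the penalisation route or an Echeverr\'ia/basic-adjoint-relationship type theorem, because the It\^o-plus-divergence computation alone only gives $\int \mathcal{L} f\,p\,\dd y=0$, i.e.\ that $\frac{\dd}{\dd t}\int P_tf\,p\,\dd y$ vanishes at $t=0$; extending this to all $t$ would require knowing that $P_tf$ stays in your Neumann class.
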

\begin{proof}
By the Gibbs property (item~\eqref{itm:bead-gibbs} in \cref{sec:thebeadmodel}), under $\bfP_\gamma$, upon conditioning on the frozen configuration outside $F_L$, the active bead positions are uniformly distributed on $\DBar_\calH$. Since the height is pinned outside of $F_L$, $x_0 \notin F_{L+1}$, it holds that
$$ \vol_R \ch^{x_0}(y) = C - \sum_{\fraki \in Q_L} y(\fraki), $$
where $C$ depends only on the conditioned region and the cutoff.

It is classical that normally reflected Brownian motion with constant drift $v$ has reversible density proportional to $e^{2 v \cdot y}$ on a bounded domain; see \cite[Theorem~2.3(iii)]{BurdzyChenPal11_Archimedes}. Taking $v(\fraki) = -c$ for every $\fraki \in Q_L$ gives \eqref{eq:vol}. The conditioned region remains fixed under the dynamics, so averaging over it proves stationarity (and reversibility).
\end{proof}

\begin{proof}[Proof of~\eqref{eq:static}]
In this proof we reparameterise $R = L+1$, so $x_0 = (2L+2,2L+2) \notin F_{L+1}$, and consider a limit as $L \rightarrow \infty$.
In other words, for the above choice $\kappa =O( (L+1)^{-100})$, we will show that
\begin{equation}\label{eq:15}
\lim_{L\rightarrow\infty} \kappa^{-1} U^{x_0}_{\kappa}[ \chi_{L+1} ] = \frac{1}{2\pi}.
\end{equation}
Recall the definition of $U_t^{x_0}$ in~\eqref{eq:22}, which together with~\eqref{eq:vol} allows us to write 
$$ U_{t}^{x_0}[ \chi_{L+1} ] = \bbE_\gamma[J((\eta_s)_{s \in [0, t]}, 0) \, \vol_{L+1} \ch^{x_0}(\eta_0) ]\,. $$
We wish to replace $\eta$ with the truncated dynamics $\eta^L$. Defining
\begin{align}
U^L_\kappa &\eqdef \bbE_\gamma[J((\eta^L_s)_{s \in [0, t]}, 0) \, \vol_{L+1} \ch^{x_0}(\eta_0) ]
\end{align}
we claim that in the regime of interest, $\kappa =O( (L+1)^{-100})$, it holds
$$ \kappa^{-1} U^{x_0}_{\kappa}[ \chi_{L+1} ] = \kappa^{-1} U^L_{\kappa}[ \chi_{L+1} ] + o(1). $$
Indeed, let $A^\complement_L$ be the event whose probability is estimated in \eqref{eq:101}. 
On $A_L$ it holds that $J((\eta_s)_{s \in [0, t]}, 0) = J((\eta^L_s)_{s \in [0, t]}, 0)$, therefore
\begin{align}
|U_{t}[ \chi_{L+1} ] - U^L_t| &= |\bbE[ \ic_{A_L^\complement} J((\eta_s)_{s \in [0, t]}, 0) \, \vol_{L+1} \ch^{x_0}(\eta_0) ] | \\
&\le \bbP\big(A_L^\complement\big)^{1/2} \, \bbE_\gamma[ J((\eta_s)_{s \in [0, t]}, 0)^4]^{1/4} \, \bbE_\gamma[(\vol_{L+1}\ch^{x_0}(\eta_0))^4]^{1/4}
\end{align}
where we have used Cauchy--Schwarz.

Now we turn our attention to $U^L_t$. By stationarity, the current is mean zero, i.e. 
$$\bbE_\gamma^c [  J((\eta^L_s)_{s \in [0, t]}, 0) ] = 0.$$
Take derivatives in $c$ at both sides of this equation and evaluate them at $c=0$. This gives
$$ -\bbE_\gamma \Big[ \sum_{\fraki \in Q_L} W_t(\fraki) J((\eta^L_s)_{s \in [0, t]}, 0) \Big] + 2 \bbE_\gamma[ J((\eta^L_s)_{s \in [0, t]}, 0) \, \vol_R \ch^{x_0}(\eta^L_0)  ] = 0. $$
The derivative of the normalizing constant contributes zero, since it multiplies the mean current, which vanishes by stationarity. Thus
$$ U^L_t = \thalf \bbE_\gamma\Big[ \sum_{\fraki \in Q_L} W_t(\fraki) J((\eta^L_s)_{s \in [0, t]}, 0) \Big]\,.$$
Therefore, it remains to show
\begin{align}\label{eq:18} 
\kappa^{-1} \bbE_\gamma\Big[ \sum_{\fraki \in Q_L} W_\kappa(\fraki) J_\kappa^L \Big] & \rightarrow \frac1\pi 
\end{align}
where we shortened the notation by writing $J_\kappa^L \eqdef J((\eta^L_s)_{s \in [0, \kappa]}, 0)$. 

By \eqref{eq:21}, we have 
$$ \bbE_\gamma\Big[ \sum_{\fraki \in Q_L} W_\kappa(\fraki) J_\kappa^L \Big] = \bbE_\gamma\Big[ \sum_{\fraki, \frakj \in Q_L} W_\kappa(\fraki) \ic\{ \frakl(\frakj) = 0 \} \big(  \ic\{ Y^L_\kappa(\frakj) \ge 0 \} - \ic\{ Y^L_0(\frakj) \ge 0 \}  \big) \Big] .$$
We wish to replace $J^L_\kappa$ for $\JTil^L_\kappa$, in which
$$ \JTil^L_\kappa \eqdef \sum_{\frakj \in Q_\infty} \ic\{ \frakl(\frakj) = 0 \} \big(  \ic\{ Y_0 + W_\kappa(\frakj) \ge 0 \} - \ic\{ Y_0(\frakj) \ge 0 \}  \big) . $$
Provided such replacement comes at a negligible cost, all that remains is an explicit computation, 
which is done at the end of this proof. Let us then first control the error. 
In short: if there are two particles near the origin, this replacement is invalid. 
But this event is quadratically unlikely, so it can beat the linear blow-up of $\kappa^{-1}$. 
To that end, introduce the box $F_\varepsilon \subset F_L$, 
$$F_\varepsilon := -1{:}1 \times [-2\varepsilon, 2\varepsilon].$$
We set $\varepsilon = \kappa^{1/2-\theta}$ for some $\theta \in (0, \thalf)$. 
Let $B_\varepsilon$ be the event $B_\varepsilon \eqdef \{ \eta_0( F_\varepsilon ) \le 1 \}$ 
and consider the error term, by H\"older, with $p^{-1}+q^{-1}=1$,
\begin{equation}\label{eq:34}
\begin{split}
\Big|\bbE_\gamma\Big[ &\sum_{\fraki \in Q_L} W_\kappa(\fraki) (J_\kappa^L - \JTil_\kappa^L) \Big]\Big| \\& \lesssim_{p, q}  \bbE_\gamma\Big[ \Big(\sum_{\fraki \in Q_L} W_\kappa(\fraki)\Big)^q\Big ]^{1/q} \Big( \bbE_\gamma[ |J_\kappa^L - \JTil_\kappa^L|^p \ic_{B_\varepsilon} ]^{1/p} + 2 \bfE_\gamma[ |Q_L|^p \ic_{B_\varepsilon^\complement}]^{1/p} \Big)
\end{split}
\end{equation}
in which the last term comes from the simple bound $|J| \le |Q_L|$ applied to both $J = J^L_\kappa,\JTil^L_\kappa$. 

To estimate the first factor in \eqref{eq:34}, we use the independence of $Q_L$ and the Brownian Motions,
$$ \bbE_\gamma\Big[ \Big(\sum_{\fraki \in Q_L}W_\kappa(\fraki)\Big)^q\Big] \le \kappa^{q/2} \bfE_\gamma[ |Q_L|^{q/2}] \lesssim \kappa^{q/2} L^{q} $$
where we applied~\eqref{eq:MomQL}. 
We turn to the second factor in \eqref{eq:34}, and consider the first summand therein. Recall $A(\varepsilon, \kappa, \fraki)$ from \eqref{eq:75}, and define $\ATil(\varepsilon, \kappa, \fraki)$ for the same event, but with $Y_0 + W^L$ replacing $Y^L$. We claim that
\begin{equation}\label{eq:37}
\{ J_\kappa^L \ne \JTil_\kappa^L \} \cap B_\varepsilon \subseteq \bigcup_{\fraki \in Q_L} (A(\varepsilon, \kappa, \fraki) \cup \ATil(\varepsilon, \kappa, \fraki) ).
\end{equation}
In words, the r.h.s.\ of \eqref{eq:37} requires some particle or Brownian motion to move a distance of at least $\varepsilon$. If $\eta_0( \{0\} \times [-\varepsilon, \varepsilon] ) = 0$, this is immediate, because otherwise no particle can cross the origin, and $J_\kappa^L = \JTil^L_\kappa = 0$. Otherwise, if $\eta_0( \{0\} \times [-\varepsilon, \varepsilon] ) \ne 0$ then on $B_\varepsilon$, $\eta_0( \{0\} \times [-\varepsilon, \varepsilon] ) = 1$. In this case $J_\kappa^L \ne \JTil_\kappa^L$ can only occur if either the particle $\fraki$ in the box $\{0\} \times [-\varepsilon, \varepsilon]$ has a collision, causing the paths $(Y_r(\fraki))_{r \in [0, \kappa]})$ and $(Y_0(\fraki) + W_r(\fraki))_{r \in [0, \kappa]}$ to decouple, or if a particle from outside the box $\{0\} \times [-2\varepsilon, 2\varepsilon]$ crosses the origin. Either case requires movement of at least $\varepsilon$.

Using \eqref{eq:37}, the trivial bound $|J| \le |Q_L|$ and~\eqref{eq:MomQL}, we have
\begin{align}
\bbE_\gamma[ |J_\kappa^L - \JTil_\kappa^L|^p \ic_{B_\varepsilon} ] &\le 2 \bbE_\gamma[ |Q_L|^p \sum_{\fraki \in Q_L} (\ic_{A(\varepsilon, \kappa, \fraki)} + \ic_{\ATil(\varepsilon, \kappa, \fraki)} ) ] \\
&\le 2 \bbE_\gamma[ |Q_L|^{p+1} \ic_{\{Q_L \not\subset P_L\}} ] + L^{2p} \sum_{\fraki \in P_L} (\bbP_\gamma(A(\varepsilon, \kappa, \fraki)) + \bbP(\ATil(\varepsilon, \kappa, \fraki)) ) \\
&\lesssim L^{2p+2} e^{-C L^2} + L^{2p+4} e^{-\varepsilon^2/(2 \kappa)} \lesssim L^{2p+2} e^{-C L^2} + L^{2p+4} e^{- C \kappa^{-2\theta}}\,.
\end{align}
We turn to the final term in \eqref{eq:34}. We claim 
$$ \bfE_\gamma[ |Q_L|^p \ic_{B_\varepsilon^\complement}] \lesssim L^{2p} \varepsilon^2 .$$
Indeed, by the definition of the two-point correlation function and \eqref{boundrhon},
$$ \bfP_\gamma\big(B_\varepsilon^\complement\big) \le \thalf \bfE_\gamma[\eta_0(F_\varepsilon)(\eta_0(F_\varepsilon)-1)] = \thalf \int_{F_\varepsilon^2}\rho_2(x_1,x_2)\dd x_1\dd x_2 \lesssim \varepsilon^2. $$
On the event $\{Q_L\subset P_L\}$, we have $|Q_L|\le 9L^2$, while by Cauchy--Schwarz, \eqref{eq:90} and \eqref{eq:MomQL},
$$ \bfE_\gamma[|Q_L|^p\ic_{\{Q_L\not\subset P_L\}}] \le \bfE_\gamma[|Q_L|^{2p}]^{1/2}\bfP_\gamma(Q_L\not\subset P_L)^{1/2}\lesssim L^{2p}e^{-CL^2}. $$
The claim follows since $e^{-CL^2}\le \varepsilon^2$ for all sufficiently large $L$, recalling that $\varepsilon=\kappa^{1/2-\theta}$ and $\kappa=O((L+1)^{-100})$.

Plugging all this into \eqref{eq:34}, we obtain, for some new constants $C$, 
\begin{equation}
\Big|\bbE_\gamma\Big[ \sum_{\fraki \in Q_L} W_\kappa(\fraki) (J_\kappa^L - \JTil_\kappa^L) \Big]|\le  L^{1/2 + 2 + 4/p} \kappa^{1/2} \Big( e^{-C L^2} + e^{- C \kappa^{-2\theta}} + \kappa^{(1 - 2\theta)/p} \Big)
\end{equation}
Choosing, e.g. $(p, q) = (5/4, 5)$ and $\theta = 1/8$, gives 
$$\Big |\bbE_\gamma\Big[ \sum_{\fraki \in Q_L} W_\kappa(\fraki) (J_\kappa^L - \JTil_\kappa^L) \Big]\Big| \lesssim L^{10} \kappa^{11/10}, $$
which is enough to beat the $\kappa^{-1}$ of \eqref{eq:18}.

Therefore, it remains to show \eqref{eq:18} but with $\JTil^L_\kappa$ in place of $J^L_\kappa$. 
We have, by independence of the different Brownian paths, 
\begin{align}
\bbE_\gamma\Big[ \sum_{\fraki \in Q_L} W_\kappa(\fraki) \JTil_\kappa^L \Big] = \bbE_\gamma\Big[ \sum_{\fraki \in Q_L} \ic\{\frakl(\fraki) = 0\} W_\kappa(\fraki)  \big( \ic\{y_0(\fraki) + W_\kappa(\fraki) \ge 0 \} - \ic\{y_0(\fraki) \ge 0 \} \big)\Big ] .
\end{align}
Taking the expectation only in $\bbP$, for a single Brownian Motion integration by parts gives 
$$ \bbE[ W_\kappa(\fraki) \big( \ic\{y_0(\fraki) + W_\kappa(\fraki) \ge 0 \} - \ic\{y_0(\fraki) \ge 0 \} \big) ] = \kappa p_\kappa(y_0(\fraki)) $$
in which $p_t(y)$ is the transition kernel of a Brownian Motion. Therefore, 
$$ \bbE_\gamma\Big[ \sum_{\fraki \in Q_L} W_\kappa(\fraki) \JTil_\kappa^L \Big] = \kappa \bfE_\gamma\Big[ \sum_{\fraki \in Q_L} \ic\{\frakl(\fraki) = 0\} p_\kappa(y_0(\fraki)) \Big] = \kappa \bfE_\gamma[ \omega( p_\kappa\restr_{-L, L} ) ] $$
Since $\bfE_\gamma[ \omega( p_\kappa\restr_{-L, L} ) ] \rightarrow \pi^{-1}$, we are done. 
\end{proof}

\subsection{The dynamic term}\label{sec:dynamic-term}
The goal of this section is to prove~\eqref{eq:dynamic}. 

\begin{proof}[Proof of~\eqref{eq:dynamic}] Take $R = L/2$. Recall the definition of $ V_{t, \kappa}[\chi_{L/2}]$ in~\eqref{eq:44} and write
\begin{equation}\label{eq:52}
V_{t, \kappa}[\chi_{L/2}] = \sum_{s \in \kappa \bbN \cap [0, t)} V^0_s, \qquad V^0_s \eqdef \int \chi_{L/2}(x) \bbE_\gamma[J( (\eta_{r})_{r \in [0, \kappa]}, 0) J((\eta_r)_{r \in [\kappa, s+\kappa]}, x) ] \dd x
\end{equation}
where we used the symmetry of $\chi_{L/2}$ to replace $-x$ by $x$ in \eqref{eq:44}.

We proceed by making a series of replacements of $V$ with $V^i_s$, 
where for $i \in \{0,1,2\}$ it will hold, for some $C = C(T, \gamma)>0$,
\begin{equation}\label{eq:45}
\sup_{s \in \kappa \bbN \cap [0, t)} |V^i_s - V_s^{i+1}| \lesssim_{T, \gamma}  e^{-C (\log L)^2} + L^{-200}
\end{equation}
and finally $V^3_s$ will be controlled directly by \cref{lem:4}. This is sufficient to prove the theorem, since the decay of the r.h.s.\ of \eqref{eq:45} beats the $\kappa^{-1} =O( L^{100})$ generated by the sum in \eqref{eq:52}. 
Only the case $i = 2$ will be controlled by $L^{-200}$,  the rest enjoying the better decay of $e^{-C (\log L)^2}$.

We proceed by defining all of the replacements, $V^i_s$ for $i \in 1{:}3$, leaving the proof of \eqref{eq:45} to the end. 
First, we replace the process $\eta$ with the truncated process $\eta^L$, defining
$$ V^1_s \eqdef \int \chi_{L/2}(x) \bbE_\gamma[J( (\eta^L_r)_{r \in [0, \kappa]}, 0) J((\eta^L_r)_{r \in [\kappa, s+\kappa]}, x) ] \dd x. $$
At this point, we perform the integral in space, writing
\begin{align}
\int &\chi_{L/2}(x) J((\eta^L_r)_{r \in [\kappa, s+\kappa]}, x) \dd x \\
&= \sum_{\fraki \in Q_L} \int \chi_{L/2}(x) \ic_{\{ \frakl(\fraki) = \frakl(x) \}} \big(  \ic_{\{ Y_{s + \kappa}(\fraki) \ge y(x) \}} - \ic_{\{ Y_\kappa(\fraki) \ge y(x) \}} \big) \dd x=:\sum_{\fraki \in Q_L} J_\fraki\,.
\end{align}
An explicit computation using only that $\ic_{F_R} \le \chi_R \le \ic_{F_{R+1}}$ shows that, 
for $\fraki \in Q_\infty$, we have 
\begin{equation}\label{eq:6}
J_\fraki = \begin{cases}
0, \quad &|\frakl(i)| > \thalf L \\
0, \quad &|\frakl(i)| \le \thalf L, \, \{Y_{s + \kappa}(\fraki), Y_\kappa(\fraki)\} \cap [-\thalf L - 1, \thalf L + 1] = \emptyset \\
Y_{s + \kappa}(\fraki) - Y_\kappa(\fraki), \quad &|\frakl(i)| \le \thalf L, \{Y_{s + \kappa}(\fraki), Y_\kappa(\fraki)\} \subset [-\thalf L, \thalf L] \\
\int_{Y_\kappa(\fraki)}^{Y_{s + \kappa}(\fraki)} \chi_{L/2}(\frakl(\fraki), y) \dd y, \quad &\text{otherwise. }
\end{cases}
\end{equation}
Observing \eqref{eq:6}, we expect that for most $\fraki \in Q_{L/2}$ it holds that $J_\fraki = Y_{s + \kappa}(\fraki) - Y_\kappa(\fraki)$, 
whereas for most $\fraki \notin Q_{L/2}$, it holds $J_\fraki = 0$, the error coming from boundary contributions. 
To that end, we define $Q_{L/2}' \subset Q_{L/2}$ according to 
$$ Q_{L/2}' \eqdef \{\fraki \in Q_{L/2} : J_\fraki \ne Y_{s+\kappa}(\fraki) - Y_\kappa(\fraki) \} $$
and write
\begin{align}\label{eq:36}
\sum_{\fraki \in Q_L} J_\fraki = \sum_{\fraki \in Q_{L/2}} (Y_{s+\kappa}(\fraki) - Y_{\kappa}(\fraki)) + \sum_{\fraki \notin Q_{L/2}} J_\fraki + \sum_{\fraki \in Q_{L/2}'} J_\fraki - \sum_{\fraki \in Q_{L/2}'} (Y_{s+\kappa}(\fraki) - Y_{\kappa}(\fraki))\,.
\end{align}
Therefore, abbreviating $J_0 \eqdef J( (\eta^L_r)_{r \in [0, \kappa]}, 0)$, we have
\begin{align}
V^1_s &= \bbE_\gamma\Big[J_0 \sum_{\fraki \in Q_L} J_\fraki \Big] \\
&= \bbE_\gamma\Big[J_0 \sum_{\fraki \in Q_{L/2}} (Y_{s+\kappa}(\fraki) - Y_{\kappa}(\fraki)) \Big] + \bbE_\gamma\Big[ J_0 \Big( \sum_{\fraki \notin Q_{L/2}} J_\fraki + \sum_{\fraki \in Q_{L/2}'} J_\fraki - \sum_{\fraki \in Q_{L/2}'} (Y_{s+\kappa}(\fraki) - Y_{\kappa}(\fraki))\Big) \Big]. 
\end{align}
At this point, we write the SDE \eqref{eq:38} as $\dd Y(\fraki) = \dd W(\fraki) + \dd L(\fraki) $, in which 
$$ L(\fraki) \eqdef \sum_{H \in \calH: \fraki \in Q_H} c(H, \fraki) \dd L_t^H $$
which allows us to conclude
$$ \bbE_\gamma\Big[J_0 \sum_{\fraki \in Q_{L/2}} (Y_{s+\kappa}(\fraki) - Y_{\kappa}(\fraki)) \Big] = \bbE_\gamma\Big[J_0 \sum_{\fraki \in Q_{L/2}} (L_{s+\kappa}(\fraki) - L_{\kappa}(\fraki)) \Big] $$
because the martingale term in the diffusion  does not contribute. Therefore, upon defining the functional $K$ according to 
\begin{equation}\label{eq:23}
K( (\eta^L_r)_{r \in [\kappa, s+\kappa]} ) \eqdef \sum_{\fraki \in Q_{L/2}} (L_{s+\kappa}(\fraki) - L_{\kappa}(\fraki)) + \sum_{\fraki \notin Q_{L/2}} J_\fraki + \sum_{\fraki \in Q_{L/2}'} J_\fraki - \sum_{\fraki \in Q_{L/2}'} (Y_{s+\kappa}(\fraki) - Y_{\kappa}(\fraki)),
\end{equation}
we have 
\begin{equation}\label{eq:42}
V^1_s = \bbE_\gamma\Big[J_0 ((\eta^L_r)_{r \in [0,\kappa]} ) K((\eta^L_r)_{r \in [\kappa, s+\kappa]} ) \Big]\,.
\end{equation}

At this point we introduce the event
$$ E_1 \eqdef \{ \Delta_L(\eta_0)>\log L \} \cap \{ Q_L \subset P_L \} \cap \{ \forall \fraki \in Q_L : \sup_{t \in [0, T]} |Y_t(\fraki) - Y_0(\fraki)| \le \log L \}$$
and define
$$ V^2_s \eqdef \bbE_\gamma\Big[\ic_{E_1} J_0 ((\eta^L_r)_{r \in [0,\kappa]} ) K((\eta^L_r)_{r \in [\kappa, s+\kappa]} ) \Big]\,. $$
We claim that there is a function $K^\sharp$ on $C([0, s), \OmegaBar(F_L \setminus F_{L/3}))$ such that
\begin{equation}\label{eq:33}
\ic_{E_1} K((\eta^L_r)_{r \in [\kappa, s+\kappa]} ) = K^\sharp( (\eta^L_r\restr_{F_L \setminus F_{L/3}})_{r \in [\kappa, s+\kappa]} ).
\end{equation}
In other words, after restricting to $E_1$, the function $K$ depends only on what happens outside of $F_{L/3}$. We argue that such a function exists independently for each of the four terms on the r.h.s.\ of \eqref{eq:23}. 

The most interesting case being the first term. Upon aggregating the local times $L_t^H$ coming from each $L(\fraki)$, we see that in the bulk, most of the coefficients sum to zero. We are left only with a remainder
$$ \sum_{\fraki \in Q_{L/2}} L(\fraki) = \sum_{H \in \calH'} c'(H) L^H $$
for some unimportant constants $|c'(H)| \le 1$, and for the set of constraints
$$ \calH' := \{ H_\frakp : \frakp \in P_L, \, |\frakp \cap Q_{L/2}| = 1 \} \cup \{ H^+_\fraki : \fraki \in Q_L \} \cup \{ H^-_\fraki : \fraki \in Q_L \}\,. $$
We argue that under $E_1$, the local time increments $L^H_{s+\kappa} - L^H_{\kappa}$ for $H \in \calH'$ 
only depend on $(\eta^L_r\restr_{F_L \setminus F_{L/3}})_{r \in [\kappa, s+\kappa]}$. 
If $H = H^\pm_\fraki$ for some $\fraki \in Q_L$, then this is immediate. 
Otherwise, for $H = H_\frakp$, for there to be any dependence on the region $F_{L/3}$, 
a particle would need to move from within $\log L$ of the boundary $\partial F_{L/2}$ to inside of $F_{L/3}$, 
which is a distance of order $L$. This is impossible on $E_1$. 
To see that the remaining terms of \eqref{eq:23} satisfy the r.h.s.\ of \eqref{eq:33} is simpler, 
and follows immediately from the \eqref{eq:6} and the fact that on $E_1$ particles move no further than $\log L$. 
In conclusion, we reach
\begin{equation}\label{eq:27}
V^2_s = \bbE_\gamma\Big[J_0 ((\eta^L_r)_{r \in [0,\kappa]} ) K^\sharp( (\eta^L_r\restr_{F_L \setminus F_{L/3}})_{r \in [\kappa, s+\kappa]} ) \Big]\,.
\end{equation}

Next, we truncate $K^\sharp$, meaning, for $N\in\bbN$ (to be chosen later), we set 
\begin{align}
K^N &\eqdef K^\sharp \ic_{ |K^\sharp| \le L^N }, \\
V_s^3 &\eqdef \bbE_\gamma\Big[J_0((\eta^L_r)_{r \in [0,\kappa]} ) K^N( (\eta^L_r\restr_{F_L \setminus F_{L/3}})_{r \in [\kappa, s+\kappa]} ) \Big]\,. \label{eq:17}
\end{align}
Since $|K^N| \le L^N$, \cref{lem:4} gives, after decreasing $C$ if necessary,
$$ \sup_{s \in \kappa \bbN \cap [0,t)} |V^3_s| \lesssim e^{-C \log(L)^2} (\bbE_\gamma[|K^N|^4])^{1/4} \le L^N e^{-C \log(L)^2} \lesssim e^{-C \log(L)^2}. $$
Thanks to the above, all that remains is to prove \eqref{eq:45} in the case of $i = 0, 1, 2$.
\medskip

\noindent\textit{Proof of case $i = 0$}
Consider the event
$$ E = \{( \eta_t\restr_{F_{L/2}} )_{t \in [0, T]} \ne ( \eta^{L}_t\restr_{F_{L/2}} )_{t \in [0, T]} \}$$
then by \cref{cor:1} we have
\begin{equation}\label{eq:49}
\bbP_\gamma( E^c ) \lesssim e^{ -C (\log L)^2 }.
\end{equation}
Using Cauchy--Schwarz, along with the fact that $J$ has all moments, we have
\begin{align}
\bbE_\gamma[J( (\eta_r)_{r \in [0, \kappa]}, 0) J((\eta_r)_{r \in [\kappa, s+\kappa]}, x) ] &= \bbE_\gamma[\ic_E J( (\eta_r)_{r \in [0, \kappa]}, 0) J((\eta_r)_{r \in [\kappa, s+\kappa]}, x) ] + O(e^{ -C (\log L)^2 }) \\
&= \bbE_\gamma[\ic_E J( (\eta^L_r)_{r \in [0, \kappa]}, 0) J((\eta^L_r)_{r \in [\kappa, s+\kappa]}, x) ] + O(e^{ -C (\log L)^2 }) \\
&= \bbE_\gamma[J( (\eta^L_r)_{r \in [0, \kappa]}, 0) J((\eta^L_r)_{r \in [\kappa, s+\kappa]}, x) ] + O(e^{ -C (\log L)^2 }).
\end{align}

\noindent\textit{Proof of case $i = 1$}
The proof is the same as the case $i = 0$, since $\bbP_\gamma(E_1^c)$ also satisfies \eqref{eq:49}. All that is required is to see that $K$ has all moments. We claim that for $p \in \bbN$,
$$ \bbE_\gamma |K|^p \lesssim_p L^{p+2}. $$ 
This is trivial for the last three terms of \eqref{eq:23}, since the terms $J_\fraki$ and $Y(\fraki)$ are bounded by $L$ in absolute value. For the first, we rewrite
$$ \sum_{\fraki \in Q_{L/2}} (L_{s+\kappa}(\fraki) - L_{\kappa}(\fraki)) = \sum_{\fraki \in Q_{L/2}} (Y_{s+\kappa}(\fraki) - Y_{\kappa}(\fraki)) - \sum_{\fraki \in Q_{L/2}} (W_{s+\kappa}(\fraki) - W_{\kappa}(\fraki)) $$
then the result follows from the fact that the Brownian motion has all moments, and again that $|Y|$ is bounded by $L$. 
\medskip

\noindent\textit{Proof of case $i = 2$}
This is the term that has the worst decay. We have by Cauchy--Schwarz
\begin{align}
|V^2_s-V^3_s| &= | \bbE_\gamma\Big[\ic_{|K^\sharp| > L^N } J_0 ((\eta^L_r)_{r \in [0,\kappa]} ) K^\sharp ( (\eta^L_r\restr_{F_L \setminus F_{L/3}})_{r \in [\kappa, s+\kappa]} ) \Big] | \\
&\lesssim \bbE_\gamma[|J_0|^4]^{1/4} \bbE_\gamma[|K^\sharp|^4]^{1/4} \bbP_\gamma( |K^\sharp| > L^N )^{1/2} \\ 
&\le \bbE_\gamma[|J_0|^4]^{1/4} \bbE_\gamma[|K^\sharp|^4]^{1/4} (\bbE_\gamma[|K^\sharp|] / L^N)^{1/2} \le L^{3 - N/2}.
\end{align}
So \eqref{eq:45} holds upon choosing, say, $N = 1000$. 
\end{proof}

\appendix

\section{Reflecting SDE}

This appendix is devoted to the rigorous study of the finite-volume dynamics~\eqref{eq:38}. 
We begin by recalling the Skorokhod reflection map, its properties and we then turn to the proof of Proposition~\ref{lem:triple}.  

\subsection{The Skorokhod reflection map}\label{sec:skorokhod-solution}

Let $\mathcal D \subset \bbR^d$ be a convex domain and for $y \in \partial \mathcal D$, 
write $\bfN(y) \subset \bbR^d$ for the set of inward-pointing unit normal vectors.  

\begin{definition}\label{def:2}
For a  path $w \in C([0, \infty), \bbR^{d})$  such that $w(0) \in \mathcal D$, 
we say that $y \in C([0, \infty), \mathcal D)$ is a solution of the Skorokhod problem under normal reflection if the following properties hold 
\begin{itemize}
\item the regulator $(a_t)_{t \ge 0}\eqdef (y_t-w_t)_{t\geq 0}$  is of finite variation with $a_0 = 0$;
\item upon defining $\ell_t \eqdef \int_0^t |\dd a_s|$, we have 
  \begin{equation}\label{eq:77}
  \int_0^\infty \ic\{y_t \in \mathring{\mathcal D}\} \dd \ell_t= 0
  \end{equation}
  where $\mathring{\mathcal D}$ denotes the interior of $\mathcal D$; 
\item upon defining $(n_t)_{t \ge 0}$ to be the Radon--Nikodym derivative of $a$ with respect to $\ell$, that is, $a_t = \int_0^t n_s \dd \ell_s$, it holds that $n$ is $\ell$-a.s.\ a unit normal at $y$, i.e.
  \begin{equation}\label{eq:78}
  \int_0^\infty \ic\{n_t \notin \bfN(y_t)\} \dd \ell_t = 0.
  \end{equation}
\end{itemize}
\end{definition}

In \cite{Tanaka79_StochasticDifferential}, it was proved that the (normal) Skorokhod problem 
has a unique solution for convex domains. Let us state the result we will need. 

\begin{theorem}\label{thm:tanaka} Let $\mathcal D$ be a convex domain of $\mathbb R^d$, 
and $w$ a path in $ C([0, \infty), \bbR^{d})$ satisfying $w(0) \in \mathcal D$.
Then, there is a continuous map $\Gamma : C([0, \infty), \bbR^{d}) \rightarrow C([0, \infty), \mathcal D)$ such that 
$y \eqdef\Gamma(w)$ is the unique solution to the Skorokhod problem on $\mathcal D$, as in \cref{def:2}.
\end{theorem}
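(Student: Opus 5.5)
The plan is to follow Tanaka's strategy: build the solution by approximating the input path and pass to the limit using a stability estimate that depends only on convexity of $\mathcal D$. Uniqueness and continuity of $\Gamma$ will both come from one key inequality. If $y^1,y^2$ solve the Skorokhod problem for inputs $w^1,w^2$, with regulators $a^1,a^2$, then for every $z\in\mathcal D$, every $y\in\partial\mathcal D$ and every $n\in\bfN(y)$ we have $(z-y)\cdot n\ge 0$. This is the supporting hyperplane property of convex sets. By \eqref{eq:77} and \eqref{eq:78}, $\dd a^k_t = n^k_t\,\dd\ell^k_t$ with $n^k_t\in\bfN(y^k_t)$, and $\ell^k$ charges only times when $y^k_t\in\partial\mathcal D$. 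Hence
\begin{equation}
(y^1_t-y^2_t)\cdot(\dd a^1_t-\dd a^2_t)\le 0 .
\end{equation}

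For uniqueness I will take $w^1=w^2=w$. Then $y^1-y^2=a^1-a^2$ is continuous and of finite variation, so the chain rule gives
\begin{equation}
|y^1_t-y^2_t|^2=2\int_0^t (y^1_s-y^2_s)\cdot \dd(a^1_s-a^2_s)\le 0 ,
\end{equation}
and therefore $y^1=y^2$.

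For continuity and existence with general inputs, $y^1-y^2$ contains $w^1-w^2$, which need not have finite variation. I will first handle inputs of bounded variation, say piecewise linear ones, and then extend. For finite-variation inputs the same computation gives
\begin{equation}
|y^1_t-y^2_t|^2\le |w^1_t-w^2_t|^2 + 2\int_0^t (y^1-y^2-(w^1-w^2))\cdot \dd(w^1-w^2) + \dots ,
\end{equation}
which is not uniform in the sup norm. Tanaka's actual estimate is sharper. For step inputs, the solution is obtained by successive projections onto $\mathcal D$: the projection $\pi_{\mathcal D}$ is $1$-Lipschitz, and iterating it gives $y_t = \pi_{\mathcal D}(y_{t_{k}}+w_t-w_{t_k})$ on each step. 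For step or piecewise-constant approximations $w^{(m)}\to w$ uniformly on $[0,T]$, I will show that the projected solutions $y^{(m)}$ are Cauchy in sup norm. To do this I will bound $|y^{(m)}-y^{(m')}|^2$ by integrating by parts against $\dd a^{(m)}$ and $\dd a^{(m')}$. This gives a bound of the form $\|w^{(m)}-w^{(m')}\|_\infty\,(\ell^{(m)}_T+\ell^{(m')}_T)$ plus the nonpositive cross term from the key inequality.

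The hardest step is a uniform bound on the total variations $\ell^{(m)}_T$ in terms of the modulus of continuity of $w$. For a general convex domain this is where Tanaka assumes or proves an admissibility condition. In our setting $\mathcal D=\DBar_\calH$ is a polyhedron with nonempty interior, so I will use the interior-ball trick. Fix a ball $B(z_0,r)\subset\mathcal D$. Then every $n\in\bfN(y)$ satisfies $(z_0-y)\cdot n\ge r$. Consequently
\begin{equation}
r\,\dd\ell_t\le (z_0-y_t)\cdot \dd a_t .
\end{equation}
Integrating this over short intervals, where $w$ oscillates less than $r/2$, bounds $\ell_T$ by a quantity depending only on $r$, $T$ and the modulus of continuity of $w$.

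With these bounds in hand, the limit $y=\lim y^{(m)}$ exists in $C([0,T],\mathcal D)$. Its regulator $a=y-w$ is of finite variation by lower semicontinuity of total variation. Conditions \eqref{eq:77} and \eqref{eq:78} pass to the limit via the weak convergence $\dd a^{(m)}\to \dd a$ together with the closedness of the graph of the normal cone $y\mapsto \bfN(y)$. The same Cauchy estimate, applied to two inputs $w,\tilde w$ instead of two approximations of one input, shows that $\Gamma$ is continuous on compacts of $[0,\infty)$ uniformly. This yields the continuity of $\Gamma$ in the topology of uniform convergence on compacts.
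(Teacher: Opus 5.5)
Your outline is correct and is essentially Tanaka's own argument. It has the same ingredients: uniqueness from the monotonicity $(y^1_t-y^2_t)\cdot(\dd a^1_t-\dd a^2_t)\le 0$, the projection scheme for step inputs, the stability bound $|y_t-y'_t|^2\le\|w-w'\|_\infty^2+4\|w-w'\|_\infty(\ell_T+\ell'_T)$, and an interior-ball bound on $\ell_T$; this is exactly what the paper relies on, since it cites \cite{Tanaka79_StochasticDifferential} rather than proving the statement.

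One correction to your sketch concerns the interior-ball estimate. It actually gives $\ell_t-\ell_s\le |y_s-z_0|^2/r$ on intervals where $w$ oscillates by less than $r/2$. So your bound on $\ell_T$ also depends on $|y_0-z_0|$, and it needs an a priori bound on $\sup_{s\le T}|y_s-z_0|$. You can get that bound by iterating $|a_t-a_s|\le 2|y_s-z_0|$ over those intervals. This matters because the one-sided setups of \cref{sec:rewiring-solution} are unbounded.
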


\subsection{Proof of Proposition~\ref{lem:triple}}\label{sec:refl-sde-strong}

To prove well-posedness of the SDE in \eqref{eq:38} and complete 
the proof of Proposition~\ref{lem:triple}, we will use the deterministic map $\Gamma$ in \cref{thm:tanaka} 
with $\mathcal D = \DBar_\calH$ in~\eqref{eq:8}  and $\bbR^d = \bbR^{Q}$, i.e. 
we set $Y\eqdef \Gamma(y_0+W)$ for $y_0\in \DBar_\calH$ and $W=(W(\fraki)\colon \fraki\in Q)$ a family 
of independent Brownian motions started at $0$. 
We will begin by proving that $Y$ satisfies \eqref{eq:46} and \eqref{eq:47}, and finally show that it solves \eqref{eq:38}.

\begin{proof}[Proof of \eqref{eq:46} and~\eqref{eq:47}]
Let us begin with \eqref{eq:46}. Fix $T \ge 0$. For each $H \in \calH$, define
$$ Z_t^H := Y_t \cdot \bfn_H - a_H. $$
Now, $Z^H_t$ is a one-dimensional semi-martingale with $\langle Z^H \rangle_t = t$. 
Therefore its occupation measure is absolutely continuous with respect to the Lebesgue measure and 
thus $\int_0^t \ic\{ Z^H_s = 0 \} \dd s = 0$, and~\eqref{eq:46} follows at once. 

As for \eqref{eq:47}, this follows from~\cite[Th. 4]{Lepingle10_BoundaryBehavior}.
\end{proof}

\begin{proof}[Proof that $Y$ solves \eqref{eq:38}]
Write $Y = y_0 + W + A$ for the corresponding semi-martingale decomposition, and set $L := \int |\dd A|$.

Recall $\dd A_s = N_s \dd L_s$ in which  $N_s \in \bfN(Y_s)$ almost surely with respect to $L$. 
Due to the absence of multiple points (see \eqref{eq:47}), 
it holds that if $s \ge 0$ is such that $Y_s \in \partial H$, 
then $N_s = \bfn_H$. In particular, upon defining $\Lambda^H_t := \int_0^t  \ic\{ Y_s \in \partial H \} \dd L_s$, we have
$$ \int_0^t  \ic\{ Y_s \in \partial H \} \dd  A_s = \bfn_H \Lambda^H_t $$
Furthermore, invoking \eqref{eq:47} again, 
we have the partition $A_t = \sum_{H \in \calH} \int_0^t \ic\{ Y_s \in \partial H \} \dd A_s=\sum_{H \in \calH} \bfn_H \Lambda^H_t$.

Plugging this into $Y = y_0 + W + A$, and using the explicit expression, in which $(\bfe(\fraki) : \fraki \in Q)$ are the standard basis vectors on $\bbR^Q$, and that 
\begin{equation}
\bfn_H =
\begin{cases}
(\bfe(\frakj) - \bfe(\fraki))/\sqrt{2}, \quad &\text{if $H = H_{(\fraki, \frakj)}$ for some $(\fraki, \frakj) \in P$}, \\
-\bfe(\fraki), &\text{if $H = H^+_\fraki$ for some $\fraki \in Q$},  \\
\bfe(\fraki), &\text{if $H = H^-_\fraki$ for some $\fraki \in Q$},  
\end{cases}
\end{equation}
we obtain an equation analogous to~\eqref{eq:38} but 
in which each $L^H$ has been replaced by $\Lambda^H$.
Therefore, we are left to show that $L^H = \Lambda^H$ for each $H \in \calH$.

We only treat the case $H = H_\frakp$ for some $\frakp =(\fraki, \frakj)\in P$, the others being analogous. 
Subtracting the equation for $Y(\fraki)$ from the equation for $Y(\frakj)$ and dividing by $\sqrt{2}$, we get, for $Z := Y\cdot\bfn_H-a_H$,
$$ Z = \frac{y_0(\frakj) - y_0(\fraki) + W(\frakj) - W(\fraki)}{\sqrt{2}} + \Lambda^H + \sum_{J \in \calJ} c_{J} \Lambda^J $$
in which $\calJ \subset \calH$ is a finite set of elements, $H \notin \calJ$, and $c_{J}$ are constants whose 
value is not important for us. This implies that 
\begin{align}
L^H =\frac{1}{\sqrt{2}}\int \ic\{Z = 0\} \dd (W(\frakj) - W(\fraki))+ \int \ic\{Z = 0\} \dd\Lambda^H+ \sum_{{J} \in \calJ} c_{J} \int \ic\{Z = 0\} \dd \Lambda^J
\end{align}
By computing the quadratic variation, and invoking \eqref{eq:46}, the first summand at the r.h.s. is zero. 
Also the last is $0$ since, due to the absence of multiple points, 
it holds that $Z \ne 0$ almost surely with respect to $\Lambda^J$ whenever $J \ne H$. 
On the other hand, $Z = 0$ almost surely with respect to $\Lambda^H$, and therefore the remaining term
equals $\Lambda^H$, as required.

At last, uniqueness of \eqref{eq:38} follows from uniqueness of the Skorokhod reflection map.
\end{proof}

\subsection{Proof of \cref{thm:monotone}}\label{a:monotonicity}

\begin{proof}[Proof of \cref{thm:monotone}]
At first, notice that the continuity~\cite[Thm. 2.1]{Tanaka79_StochasticDifferential} and 
memoryless property~\cite[Lem. 4.1]{Sarantsev19_ComparisonTechniques} of the 
Skorokhod reflection map ensure that it suffices to prove the result 
for driver $w \in C([0, T], \bbR^Q)$ of the form 
$w_t = w_0 + v t e_\fraki$ for some fixed canonical basis vector $e_\fraki \in \bbR^Q$ and some velocity $v \in \bbR$.
Indeed, by~\cite[Lem. 4.1]{Sarantsev19_ComparisonTechniques}, the set of 
piecewise linear linear functions whose linear pieces are 
parallel to the coordinate axis are dense in $C([0, T], \bbR^Q)$, and the memoryless property 
says that the evolution of the Skorokhod reflection 
map after a given time $t>0$ only depends on its configuration at $t$ and the future evolution of the driver.  

Thus, fix $w$ as above and note that the solution of the Skorokhod reflection map for such $w$ is explicit: 
for any $\fraki\in Q$, $Y(\fraki)$ moves with velocity $v$; when it collides with other particles it forms a coalescent 
group $Q_t\subset Q$ (which is increasing in $t$ over subsets of $Q$) that move with velocity $v/|Q_t|$; if 
at any time $t_0 \in [0, T]$, $Y_t(\frakj) = M_\pm(\frakj)$ for some $\frakj \in Q_t$, then $Y_t(\frakj) = Y_{t_0}(\frakj)$ 
for all $t \in [t_0, T]$ and all $\frakj\in Q_{t}$. The evolution of $Y'$ follows the same description 
but if an active bead $\fraki \in Q$ does not belong $\fraki \notin Q'$, then $Y'_t(\fraki)= Y'_0(\fraki)$ for all $t \in [0, T]$. 
We write $Q'_t$ for the coalescent group of $Y'$.

If $v = 0$, the result is trivial, because there is no evolution 
and the ordering $Y'_t \ge \Pi_{Q'} Y_t$ holds at time $t = 0$ by assumption. 
For $v \ne 0$, we argue by contradiction. Let $t_0 \in [0, T]$ be the minimal time violating the constraint 
\begin{equation}\label{eq:68}
t_0 \eqdef \inf\{ t \in [0, T] : \exists \frakj \in Q', \, Y_t(\frakj) > Y'_t(\frakj) \}
\end{equation}
and let $v_0$ (resp.\ $v'_0$) be the velocity at $t = t_0$ of $Q_t$ (resp.\ $Q_t'$). 
Suppose $v > 0$, then 
\begin{align}
v_0 &= v |Q_{t_0}|^{-1} \ic\{ Y_{t_0}(\frakj) < M_+(\frakj), \, \forall \frakj \in Q_{t_0} \}\\
v'_0 &= v |Q'_{t_0}|^{-1} \ic\{ Y_{t_0}'(\frakj) < M'_+(\frakj), \, \forall \frakj \in Q'_{t_0} \}
\end{align}
For any particle $\frakj \in Q'$ such that $Y_{t_0+}(\frakj) > Y'_{t_0+}(\frakj)$, by continuity, we have 
$Y'_{t_0}(\frakj) = Y_{t_0}(\frakj)$
and thus $v_0 > v'_0$. 

First, we argue that $\frakj$ belongs to the coalescent group for both $Y$ and $Y'$. 
Indeed, $Y_{t_0+}(\frakj) > Y'_{t_0+}(\frakj)$ along with $Y'_{t_0}(\frakj) = Y_{t_0}(\frakj)$ implies that $Y(\frakj)$ 
has positive velocity, which requires $\frakj \in Q_{t_0}$. 
In addition, $\frakj \ge \fraki$, therefore $\fraki \in Q'$ by \eqref{eq:39}. 
Since we also have $Y'_{t_0} \ge \Pi_{Q'} Y_{t_0}$, 
$$ Y_{t_0}(\frakj) = Y_{t_0}(\fraki) \le Y'_{t_0}(\fraki) \le Y'_{t_0}(\frakj). $$
Therefore by $Y'_{t_0}(\frakj) = Y_{t_0}(\frakj)$, all of the above holds with equality, which implies $\frakj \in Q_{t_0}'$. 

In other words, $\fraki, \frakj$ both belong to the coalescent group, and so for $v_0 > v'_0$ 
we must have that either $v'_0 = 0$ or $|Q'_{t_0}| > |Q_{t_0}|$. 
The former is impossible by \eqref{eq:70}, and the latter is impossible upon proving that $Q'_{t_0} \subset Q_{t_0}$. 
To see the inclusion, if $\frakj' \in Q'_{t_0}$, then 
$$ Y_{t_0}(\frakj') \le Y'_{t_0}(\frakj') = Y'_{t_0}(\fraki) = Y_{t_0}(\fraki) \le Y_{t_0}(\frakj') $$
Again, all the above holds with equality and so $\frakj' \in Q_{t_0}$. This concludes the case $v > 0$. 

The case $v<0$ can be argued similarly with the inequalities reversed and $M_-, M_-'$ in place of $M_+, M_+'$ 
in the expression for $v_0, v_0'$. 
\end{proof}

\section{Computing the compressibility}
\label{computeXi}
Here, we prove \eqref{eq:compressibility} and \eqref{eq:U}.
We recall the definition \eqref{eq:Stilde} and we drop the index $t$ since we look only at $t=0$.
We recall \cite[Equation (1.25)]{Johnston23_ContinuousKasteleyn}. Let $\calL$ be an anticlockwise contour from $\gamma + \iota \sqrt{1 - \gamma^2}$ to $\gamma - \iota \sqrt{1 - \gamma^2}$ and staying left of the origin. Let $\calR$ be an anticlockwise contour from $\gamma - \iota \sqrt{1 - \gamma^2}$ to $\gamma + \iota \sqrt{1 - \gamma^2}$ and staying right of the origin. Define, for $\alpha \in \{b, o, u\}$ the kernel\footnote{We obtain \eqref{eq:71} from \cite[Equation (1.25)]{Johnston23_ContinuousKasteleyn} through rescaling the bead density from $\sqrt{1 - \gamma^2} \pi^{-1}$ to $\pi^{-1}$ and flipping the orientation of the discrete axis.}
\begin{equation}
\label{eq:71}
H_{\alpha}(\frakl, y) = 
\begin{cases}
(-1)^{\ic_{\alpha=o}} (1 - \gamma^2)^{-\ic_{\alpha = b}/2} \int_{\calL} z^{\ic_{\alpha=b}+\frakl} e^{-zy/\sqrt{1 - \gamma^2}} \,\frac{\dd z}{2\pi \iota z}, \quad y > 0 \text{ or } y = 0, \alpha \ne o\\
- (-1)^{\ic_{\alpha=o}} (1 - \gamma^2)^{-\ic_{\alpha = b}/2} \int_{\calR} z^{\ic_{\alpha=b}+\frakl} e^{-zy/\sqrt{1 - \gamma^2}} \,\frac{\dd z}{2\pi \iota z}, \quad y < 0 \text{ or } y = 0, \alpha = o
\end{cases}.
\end{equation}

By \cite[Theorem 1.10]{Johnston23_ContinuousKasteleyn}, we have
\begin{align}\label{eq:91}
V_{11}(\frakl, y) &= - H_o(\frakl, y) H_o(-\frakl, -y) \\
V_{12}(\frakl, y) &= - H_b(\frakl, y) H_o(-\frakl, -y) \\
V_{21}(\frakl, y) &= - H_o(\frakl, y) H_b(-\frakl, -y) \\
V_{22}(\frakl, y) &= \frac{1}{\pi} \delta_{(0, 0)} - H_b(\frakl, y) H_b(-\frakl, -y).
\end{align}

Define, for an $L^2(\dd \frak l\otimes \dd y)$  function $f : \bbZ \times \bbR \rightarrow \bbC$, the Fourier transform
$$  \hat f(p) :=  \sum_{\frakl \in \bbZ} \int_{\bbR} e^{\iota (p_1 \frakl + p_2 y)} f(\frakl, y) \dd y, \quad p=(p_1,p_2). $$
An explicit computation gives:
\begin{proposition}
  \label{prop:FT}
Taking Fourier transforms, we compute
\begin{align}
\VHat_{11}(p) &= \frac{1 - \gamma^2}{\pi} \frac{p_1^2}{p_1^2 + 2 p_1p_2 \gamma \sqrt{1-\gamma^2} + p_2^2 (1-\gamma^2)} + o(1)\\
\VHat_{12}(p) , \VHat_{21}(p) &= \frac{1 - \gamma^2}{\pi} \frac{p_1p_2}{p_1^2 + 2 p_1p_2 \gamma \sqrt{1-\gamma^2} + p_2^2 (1-\gamma^2)} + o(1)\\
\VHat_{22}(p) &= \frac{1 - \gamma^2}{\pi} \frac{p_2^2}{p_1^2 + 2 p_1p_2 \gamma \sqrt{1-\gamma^2} + p_2^2 (1-\gamma^2)} + o(1).
\end{align}
\end{proposition}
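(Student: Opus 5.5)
The plan is to compute the Fourier transforms directly from the product representation \eqref{eq:91}. A product of two kernels becomes, in Fourier space, a convolution of their transforms. Write each $H_\alpha$ as a one-dimensional spectral integral, exactly as $K_\gamma$ is written in \eqref{eq:J}.

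\emph{Step 1: spectral form of $H_\alpha$.} Deform the contours $\calL$ and $\calR$ onto the arc of the unit circle through the points $\gamma\pm\iota\sqrt{1-\gamma^2}$. Then $z^{\frakl}$ becomes $e^{\iota\frakl\phi}$ with $\phi$ ranging over an arc, and the $y$-dependence becomes $e^{-zy/\sqrt{1-\gamma^2}}$.

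\emph{Step 2: transform of each factor.} Take the Fourier transform in $y$ first. For $y>0$ and $y<0$ the exponential factors are integrable, so each half-line gives a factor $\big(z/\sqrt{1-\gamma^2}-\iota p_2\big)^{-1}$. The transform in $\frakl$ is then read off as a function of $p_1$, and we get
\begin{equation}
\hat H_\alpha(p)=\pm\frac{z(p_1)^{\ic_{\alpha=b}}(1-\gamma^2)^{-\ic_{\alpha=b}/2}}{\dots}.
\end{equation}
Equivalently, $\hat H_\alpha$ is a sum of terms, each an indicator that $p$ lies in a region bounded by a curve, multiplied by a power $z^{\ic_{\alpha=b}}$. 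This is the usual structure of sine-type or dimer kernels: the transform of $K_\gamma$ is an indicator of the set $\{\varphi\in[-1,1]\}$ composed with $(\gamma+\iota\varphi\sqrt{1-\gamma^2})^\frakl$.

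\emph{Step 3: evaluate the convolution.} The quantity $\hat V_{ij}(p)$ is the convolution
\begin{equation}
\hat V_{ij}(p)=-\frac1{2\pi}\int \hat H_{\alpha}(q)\,\hat H_{\beta}(q-p)\,\dd q,
\end{equation}
with $(\alpha,\beta)=(o,o),(b,o),(o,b),(b,b)$. The $\delta$ term contributes the constant $\frac1\pi$ to $\hat V_{22}$. The convolution is an integral of a symbol over the symmetric difference of two indicator regions shifted by $p$. As $p\to0$, this reduces to a boundary integral, linear in the shift, of the symbol difference across the boundary, divided by the normal velocity. That is where the ratio $p_ip_j/Q(p)$ comes from, with
\begin{equation}
Q(p)=p_1^2+2\gamma\sqrt{1-\gamma^2}p_1p_2+(1-\gamma^2)p_2^2 .
\end{equation}
The boundary of the Fermi region is parametrised by the Burgers-type characteristic $y\mapsto\frakl$, and $Q(p)$ is the quadratic form obtained from it.

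\emph{Step 4: consistency checks.} Two checks guard the constants. First, contracting the result against $a(\gamma)$ should give $\hat V_0(p)=1+o(1)$, which is \eqref{eq:compressibility}. Second, the $p\to0$ behaviour of $\hat V_{22}$ should be consistent with the sine-process structure on each line.

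I expect the main obstacle to be Step 3. The convolution must be expanded at small $p$ with the correct orientation and signs, including the exact cancellation of the $O(1)$ terms against the $\frac1\pi\delta$ in $\hat V_{22}$. The leading term must come out as a homogeneous degree-zero ratio rather than a constant, which is exactly the source of the discontinuity at $p=0$. My first approach would be a change of variables to the angle $\phi$ on the unit circle, which makes both regions arcs and turns the overlap into an elementary arc-length computation.
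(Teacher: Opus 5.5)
There is a genuine gap, and it sits at the step that carries the content of the proposition. The paper only says this is an explicit computation from \eqref{eq:91}, so everything rests on that computation. Your sketch of it relies on a wrong picture of $\hat H_\alpha$.

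\textbf{Step 3 cannot produce the degree-zero term.} Step~2 describes $\hat H_\alpha$ as indicators of regions bounded by curves. That holds only in the mixed representation (fixed $\frakl$, Fourier in $y$), which is what \eqref{eq:J} displays. The full transform on $\bbZ\times\bbR$ has a different form. For $K_\gamma$ of \eqref{eq:J}, set $w(\varphi)=\gamma+\iota\varphi\sqrt{1-\gamma^2}$. The geometric series in $\frakl$ over $\frakl\ge0$ (where $|\varphi|<1$) and over $\frakl<0$ (where $|\varphi|>1$) both sum to
\[
\hat K_\gamma(p)=\frac{1}{1-e^{\iota p_1}w(p_2)}.
\]
This function is smooth except for two singularities of order $|q-q^*|^{-1}$, at $p_2=\pm1$ with $e^{\iota p_1}w(\pm1)=1$.

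Suppose instead that $\hat H_\alpha,\hat H_\beta$ were bounded weights times indicators of sets of finite perimeter. Then $p\mapsto\int\hat H_\alpha(q)\hat H_\beta(q-p)\,\dd q$ would be Lipschitz, so it would equal its value at $0$ plus $O(|p|)$. A boundary term ``linear in the shift'' is $O(|p|)$ and disappears into the $o(1)$. Nothing in that picture divides it by a linear form in $p$, so the direction-dependent limit $p_ip_j/Q(p)$ cannot appear. Your prefactor $\frac1{2\pi}$ is also the one for a one-dimensional convolution in $\varphi$; on $\bbZ\times\bbR$ it is $(2\pi)^{-2}$.

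\textbf{Step 2's transform is invalid.} On $\calL$, and on the unit-circle arc through $-1$, $\operatorname{Re}z<0$ on part of the contour. So $e^{-zy/\sqrt{1-\gamma^2}}$ is not integrable on $y>0$, and the factor $(z/\sqrt{1-\gamma^2}-\iota p_2)^{-1}$ is unjustified. Moreover, for $\frakl\ge0$, deforming to the chord $z=w(\varphi)$, $\varphi\in[-1,1]$, gives $H_b(\frakl,y)=-e^{-\gamma y/\sqrt{1-\gamma^2}}K_\gamma(\frakl,y)$. So for $\gamma\ne0$, $H_b$ grows exponentially in one $y$-direction and has no Fourier transform. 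That factor must be removed first, which is harmless because it cancels in the products \eqref{eq:91}. The chord, not the unit circle, is the contour on which the $y$-dependence is a pure phase.

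\textbf{The missing mechanism.} The degree-zero term comes from the geometric series in $\frakl$ near the endpoints $\varphi=\pm1$. Equivalently, it comes from pairing the point singularities of $\hat K_\gamma$ with those of its shift, not from an overlap of regions. Take $V_{22}(x)=\frac1\pi\delta_{(0,0)}-K_\gamma(x)K_\gamma(-x)$, which holds by \eqref{rhonJ}. Write $K_\gamma(\frakl,y)=\int e^{-\iota y\varphi}k(\frakl,\varphi)\frac{\dd\varphi}{2\pi}$ with $k(\frakl,\varphi)=w(\varphi)^{\frakl}(\ic_{\frakl\ge0}\ic_{|\varphi|\le1}-\ic_{\frakl<0}\ic_{|\varphi|>1})$. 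Then
\[
\VHat_{22}(p)=\frac1\pi-\sum_{\frakl\in\bbZ}e^{\iota p_1\frakl}\int k(\frakl,\varphi)\,k(-\frakl,\varphi-p_2)\,\frac{\dd\varphi}{2\pi}.
\]

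The $\frakl=0$ term equals $\frac{2-|p_2|}{2\pi}$, which cancels the $\frac1\pi$ up to $O(|p|)$; this is the cancellation you anticipated. Take $p_2>0$. For $\frakl\ne0$, the integrand equals $-(w(\varphi)/w(\varphi-p_2))^{\frakl}$ on an interval of length $p_2$: next to $\varphi=-1$ if $\frakl\ge1$, and next to $\varphi=1$ if $\frakl\le-1$. Let $\omega=\gamma+\iota\sqrt{1-\gamma^2}$. Near $\varphi=-1$,
\[
1-e^{\iota p_1}w(\varphi)/w(\varphi-p_2)=-\iota\bigl(p_1+\sqrt{1-\gamma^2}\,\omega p_2\bigr)+O(|p|^2),
\]
which is of exact order $|p|$. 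This is your ``normal velocity''.

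Summing both geometric series, the $\frakl\neq0$ terms contribute
\[
-\frac{\iota p_2}{2\pi}\Big(\frac{1}{p_1+\sqrt{1-\gamma^2}\,\omega p_2}-\frac{1}{p_1+\sqrt{1-\gamma^2}\,\bar\omega p_2}\Big)+O(|p|)=-\frac{(1-\gamma^2)p_2^2}{\pi Q(p)}+O(|p|).
\]
Here $Q(p)=p_1^2+2p_1p_2\gamma\sqrt{1-\gamma^2}+p_2^2(1-\gamma^2)=|p_1+\sqrt{1-\gamma^2}\,\omega p_2|^2$. Hence $\VHat_{22}(p)=\frac{(1-\gamma^2)p_2^2}{\pi Q(p)}+O(|p|)$, as claimed. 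The other three entries are computed the same way once $H_o$ is written in this mixed form.

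Finally, your first check in Step~4 is not independent. The paper deduces \eqref{eq:compressibility} from this proposition, and that identity tests only one linear combination of the four entries.
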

From this, \eqref{eq:compressibility} immediately follows. 
Finally, \eqref{eq:U} is a consequence of \cite[Theorem 1.10]{Johnston23_ContinuousKasteleyn}, that implies
\begin{eqnarray}
  \bfE_\gamma[\nabla \ch(x)]=(H_o(0,0)-\frac12,H_b(0,0)),
\end{eqnarray}
together with \eqref{eq:71}.

\section*{Statement of AI usage}

AI was used only used to generate figures and provide bibliographic references.

\section*{Acknowledgments}
G.~C. gratefully acknowledges financial support
via the UKRI FL fellowship ``Large-scale universal behaviour of Random
Interfaces and Stochastic Operators'' MR/W008246/1. The research of H.~G. and F.~T. was funded 
by the Austrian Science Fund (FWF) 10.55776/F1002. 
For open access purposes, the authors have applied a CC BY public copyright license to any author accepted manuscript version arising from this submission.

\end{document}